\numberwithin{equation}{section} % Number equations by section
\newtheorem{theorem}{Theorem}
\newtheorem{lemma}[theorem]{Lemma}
\newtheorem{proposition}[theorem]{Proposition}
\newtheorem{corollary}[theorem]{Corollary}
\theoremstyle{definition}
\newtheorem{definition}[theorem]{Definition}
\theoremstyle{remark}
\def \div {\mathrm{div}}
\title[Keller-Osserman and Harnack type results for elliptic PDE]{Keller-Osserman and Harnack type results for nonlinear elliptic PDE with unbounded ingredients}
\author[B. Sirakov]{Boyan Sirakov}
\author[A. Sobral]{Aelson Sobral}
\subjclass{35B50, 35J15}
\begin{document}

\begin{abstract} 
We show that the classical Keller-Osserman theorem on the solvability of the equation 
$\mathcal{L}[u] = f(u)$
is valid when $\mathcal{L}$ is a general operator in divergence form with unbounded coefficients in the natural regime of local integrability. This has been open up to now, earlier results concerned operators with locally bounded ingredients. We also settle an open question from \cite{SS21} about the validity of the strong maximum principle for supersolutions of $\mathcal{L}[u] = f(u)$ under the optimal integral condition of V\'azquez. More generally, we obtain a  Harnack inequality for positive solutions of this equation, which extends a result by V. Julin. 
\end{abstract}

\maketitle

%%%%%%%%%%%%%%%%%
%\tableofcontents
%%%%%%%%%%%%%%%%%

\section{Introduction and main results}
\subsection{Brief overview of novelties}
We start by recalling the following very classical result on general solvability of semilinear elliptic PDE, obtained (independently) by Keller \cite{K57} and Osserman \cite{O57} in 1957: given a continuous nondecreasing function $f\colon[0,\infty)\to [0,\infty)$, the equation 
\begin{equation}\label{kelosseq}
\Delta u = f(u)
\end{equation} 
has a solution (and even a subsolution) $u\colon\mathbb{R}^n\to \mathbb{R}$ if and only if the primitive $F(t) = \int_0^t f(s)\,ds$ is such that
\begin{equation}\label{kelosscond}
 \int_{1}^\infty \frac{dt}{\sqrt{F(t)}} \quad\mbox{diverges}.
\end{equation} 
This result has generated a huge body of research, which continues to this day (a partial review of the literature will be given below). 

A very extensively studied question is the possibility of considering more general elliptic operators or more general nonlinear functions in \eqref{kelosseq}. One of our main results below states that the Keller-Osserman result holds if the Laplacian is replaced by a uniformly elliptic operator in divergence form with {\it (locally) unbounded coefficients}. This question has been completely open up to now, we settle it here for coefficients in uniformly local Lebesgue spaces. 
\smallskip

Another fundamental question on the equation \eqref{kelosseq} is the validity of the {\it strong minimum principle}, that is, whether a nonnegative (super)solution which attains the value zero vanishes identically. It is very classical that this is so if $f$ grows at most linearly around zero, but in 1984 V\'asquez \cite{V84} (see also Pucci-Serrin \cite{PS00}) showed that the right hypothesis for the validity of the SMP is as follows: if $u$ is a nonnegative supersolution, that is, $\Delta u \le f(u)$, $u\ge0$ in a domain $\Omega$, and $u(x_0)=0$ then $u\equiv 0$ in~$\Omega$ provided 
\begin{equation}\label{vascond}
 \int_{0}^1 \frac{dt}{\sqrt{F(t)}} \quad\mbox{diverges}.
\end{equation} 

For the validity of the SMP, the possibility of replacing the Laplacian by an operator with locally unbounded coefficients was considered for the first time only recently in~\cite{SS21}. The main result on SMP from that paper states that for such operators the SMP is valid provided 
\begin{equation}\label{sscond}
 \limsup_{t\searrow 0} \frac{f(t)}{t\,(\ln t)^2}>0.
\end{equation} 
The condition \eqref{sscond} is stronger than \eqref{vascond}, so the question of the validity of the SMP under the optimal \eqref{vascond} remained open (see \cite[p.4-5]{SS21}). We will answer this question in the affirmative here. 
\smallskip 

The validity of the SMP under the V\'asquez integral condition is a consequence of a result from \cite{SS21} and our third main result here, which is an extension of the Harnack inequality of Julin \cite{VJ1}. The generalized Harnack inequality in \cite{VJ1} states that any nonnegative solution of \eqref{kelosseq} (or of div$(A(x)Du)=f(u)$ for a bounded uniformly positive matrix $A(x)$) in $B_2$ is such that for some constant $C>0$ independent of $u$ and $f$ 
\begin{equation}\label{julin}
    \int_{\inf_{B_1} u}^{\sup_{B_1} u} \frac{dt}{\sqrt{F(t)} + t} \leq C.
\end{equation}
We will show that this inequality is valid for solutions of more general equations with unbounded coefficients.

\subsection{Precise definitions and results}

Let $\Omega \subset \mathbb{R}^n$ be an arbitrary domain and $\mathcal{L}$ be the linear uniformly elliptic second-order operator in general divergence form 
\begin{equation}\label{form of the eq}
    \mathcal{L}[u] = \div\left(A(x)Du + b_1(x)u\right) + b_2(x) \cdot Du + c(x)u.
\end{equation}
The measurable and bounded matrix-valued function $A(x)$  satisfies the uniform ellipticity condition  
$$
    \lambda |\xi|^2 \leq A(x)\xi \cdot \xi \leq \Lambda |\xi|^2 \quad \text{for all } \xi \in \mathbb{R}^n,
$$  
for almost every $x \in \Omega$, with constants $0 < \lambda \leq \Lambda$. The lower-order coefficients belong locally to Lebesgue spaces, which makes it possible for weak solutions to satisfy the maximum principle and the Harnack inequality; specifically,
\begin{equation}\label{integrability of exponents}
    b_1,b_2 \in L^q_{loc}(\Omega) \quad c \in L^{q/2}_{loc}(\Omega) \quad \text{with} \quad q > n.
\end{equation}  

We will consider solutions (resp. supersolutions, subsolutions) of 
\begin{equation}\label{maineq}
    \mathcal{L}[u] = (\mbox{resp. }\le, \ge) f(u)\quad \mbox{in }\; \Omega
\end{equation}
which belong to the Sobolev space $H^1_{\mathrm{loc}}(\Omega)$ and solve the inequality in the standard weak sense (see for instance \cite[Chapter 8]{GT}), of course always assuming that $f$ and $u$ are such that $f(u)\in H^{-1}(\Omega)$ (for instance $f(u)\in L^s_{\mathrm{loc}}(\Omega)$ for $s=2n/(n+2)$ if $n\ge 3$ or $s>1$ if $n=2$ is enough). Here and throughout the paper, $f\colon[0,\infty)\to [0,\infty)$ is a continuous nondecreasing function.

A most natural question on \eqref{maineq} is its solvability when $\Omega=\mathbb{R}^n$. As noted above, for $\mathcal{L}=\Delta$ and classical solutions, it was answered already in 1957. That result by Keller and Osserman was one of the first general solvability results for semilinear elliptic equations and has had an enormous influence over the years. 
 In particular, there has been a huge amount of work on extending the Keller-Osserman result to more general operators in \eqref{maineq} and weak solutions, with extensions to quasilinear, fully nonlinear, singular, degenerate elliptic operators. It has also generated a lot of work on the closely related question of explosive solutions in bounded domains, as well as a more detailed description of the behaviour of the solutions. We refer the interested reader to \cites{AGQ, CDLV14, CDLV16, DGGW, Fa, FSs, FS, FQS13,  FPRArma, FPR,  K13, Ra}, as well as to the citations in these works. 
 
 Even though Keller-Osserman type results are available for a large variety of classes of elliptic operators, as evidenced by these references, a common feature of previous results is the local boundedness of the spatial dependence of the coefficients in the equation. This can be explained by the the original Keller-Osserman and still most frequently used approach to this problem -- one uses a subsolution constructed by replacing the coefficients by their lower bounds and writing a one-dimensional version of the resulting autonomous equation (that is, searching for radial solutions of that equation), which can be solved by ODE methods and its explicit behaviour can be determined precisely under the integral hypothesis \eqref{kelosscond}. Obviously, such a method cannot be used for locally unbounded coefficients. Our first contribution here is to show that the Keller-Osserman result holds true for operators that have locally unbounded coefficients with finite uniformly local norms.
 
We recall the definition of uniformly local Lebesgue spaces and norms. We say that $h\in L^q_{ul}(\Omega)$, $1\le q \le\infty$,  provided the quantity (norm)
\begin{equation}\label{deful}
\|h\|_{L^q_{ul}(\Omega)} \coloneqq \sup_{x\in \mathbb{R}^n} \|h\|_{L^q(\Omega\cap B_1(x))}
\end{equation}
is finite. Uniformly local $L^q$-norms essentially measure the local integrability features of a function; they  have been often used in the study of global existence of solutions of Navier-Stokes or parabolic equations, e.g.~in the classical papers \cite{GV, Ka}. Interestingly, the main result from \cite{SS21} on the seemingly unrelated Landis conjecture also represented an extension of results on equations with bounded coefficients to coefficients in uniformly local Lebesgue spaces. These spaces also play a pivotal role in the recent work on optimized elliptic estimates \cite{SS24}.

Our Keller-Osserman type result is as follows. 
\begin{theorem}\label{Keller-Osserman condition} Assume that \begin{equation}\label{coefs-loc-unif-integrable}
    b_1,\, b_2 \in L^q_{ul}(\mathbb{R}^n) 
    \quad \text{and} \quad 
    c \in L^{q/2}_{ul}(\mathbb{R}^n),\qquad\mbox{for some }\;q>n.
\end{equation}
Let $u \in H^1(\mathbb{R}^n)$ be a continuous entire weak subsolution to \eqref{maineq} in $\mathbb{R}^n$.
 Then
 \begin{equation}\label{kellosres}
    \int_{1}^\infty \frac{dt}{\sqrt{F}(t)} = \infty.
\end{equation}
\end{theorem}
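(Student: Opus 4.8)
The plan is to argue by contraposition: assuming that the Keller--Osserman integral \emph{converges}, $\int_1^\infty dt/\sqrt{F(t)}<\infty$, I would produce a contradiction with the existence of a nontrivial entire subsolution. Two elementary observations drive the argument. First, since $f$ is nondecreasing, convergence of the integral already forces $f(t)/t\to\infty$ as $t\to\infty$: otherwise $f(t_k)\le C t_k$ for a sequence $t_k\to\infty$ with $t_{k+1}>2t_k$, whence $F(t)\le C t_k^2$ on $[t_k/2,t_k]$, and each of these disjoint intervals contributes at least $(2\sqrt C)^{-1}$ to the integral. Second, $F$ is positive on some $[t_0,\infty)$ and the increasing function $\Phi(s):=\int_{t_0}^s dt/\sqrt{F(t)}$ (set to $0$ on $[0,t_0]$) has $\Phi(\infty)=:V^\ast<\infty$ \emph{exactly because} $\int^\infty dt/\sqrt F$ converges. (The degenerate situations in which $f$, hence $F$, vanishes on a whole neighbourhood of $0$ render the conclusion immediate and are set aside.)

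The change of unknown $v:=\Phi(u)$ turns any entire subsolution into a \emph{bounded} function, $0\le v<V^\ast$, with $v$ approaching $V^\ast$ only where $u\to\infty$. Inserting $\varphi=\Phi'(u)\,\psi$, $\psi\ge0$, as a test function in the weak inequality $\mathcal{L}[u]\ge f(u)$, and using $\Phi'=F^{-1/2}$, $\Phi''=-\tfrac12 f\,F^{-3/2}$ together with $A(x)Du\cdot Du=F(u)\,A(x)Dv\cdot Dv$, one finds that $v$ satisfies, weakly,
\[
 \mathcal{L}[v]\ \ge\ g(v)\Bigl(1-\tfrac12\,A(x)Dv\cdot Dv\Bigr)-\mathcal{E}(x),
\]
where $g(\sigma):=f(\Phi^{-1}(\sigma))/\sqrt{F(\Phi^{-1}(\sigma))}\to+\infty$ as $\sigma\nearrow V^\ast$ (here $f(t)/t\to\infty$ is used), and $\mathcal{E}$ is an error term built from $b_1,b_2,c$ whose $L^{q/2}_{ul}$ and $L^q_{ul}$ sizes are controlled by $V^\ast$, the boundedness of $s\mapsto s\,\Phi'(s)$, and the uniformly-local norms of the coefficients. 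Heuristically, $v$ becomes a \emph{strict} subsolution as it climbs towards the ceiling $V^\ast$, which should prevent it from ever reaching it.

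Converting this heuristic into a genuine a priori bound $\sup_{\mathbb{R}^n}u\le M^\ast$, with $M^\ast$ depending only on $n,\lambda,\Lambda$, the uniformly-local norms of $b_1,b_2,c$ and on $F$ — and emphatically \emph{not} on $u$ — is the core of the proof, and the point where the classical method breaks down, since a radial ODE supersolution is unavailable for unbounded coefficients. Instead I would run a De Giorgi/Moser iteration directly on the displayed differential inequality: testing against $\zeta^2\Psi(v)$ for a suitable increasing $\Psi$ keeps the favourable term $\int\zeta^2 g(v)\Psi(v)$ on the good side, the lower-order errors $\int\zeta^2|\mathcal{E}|\,\Psi(v)$ are absorbed by the usual Sobolev and H\"older manipulations (this is where $q>n$ is needed), a Bernstein-type device controls the quadratic gradient term $A(x)Dv\cdot Dv$ using the boundedness of $v$, and the blow-up of $g$ supplies at each step a gain that is calibrated, through $V^\ast=\int_{t_0}^\infty dt/\sqrt F$, to make the telescoping product of iteration constants converge to a finite $u$-independent limit. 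The essential subtlety — and, I expect, the main obstacle — is that \emph{every} constant in this scheme (Caccioppoli, Sobolev embedding, local maximum principle, weak Harnack) must be uniform over all unit balls $B_1(x_0)$, $x_0\in\mathbb{R}^n$; this uniformity is exactly what \eqref{coefs-loc-unif-integrable} buys, and tracking how $\|b_i\|_{L^q_{ul}}$ and $\|c\|_{L^{q/2}_{ul}}$ propagate through the nonlinear iteration is delicate. An equivalent packaging is to prove a growth lemma — if $v(x_0)\ge V^\ast-\varepsilon$ then $v$ exceeds $v(x_0)+\delta(\varepsilon)$ somewhere in $B_{r(\varepsilon)}(x_0)$, with $\sum_k r(\varepsilon_k)<\infty$ when the integral converges — and iterate it to force $u$ to blow up at an interior point, contradicting continuity; the analytic content is the same.

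With the uniform bound $u\le M^\ast$ in hand, $u$ becomes a bounded nonnegative subsolution of $\mathcal{L}[u]\ge f(u)\ge 0$, hence a bounded nonnegative $\mathcal{L}$-subsolution with vanishing right-hand side; combining this with the hypothesis $u\in H^1(\mathbb{R}^n)$ — via the local maximum principle, again with constants uniform in the centre, and $\|u\|_{L^2(B_R)}\le\|u\|_{L^2(\mathbb{R}^n)}$ — and a Liouville-type argument for $\mathcal{L}$ on $\mathbb{R}^n$ forces $u\equiv 0$, contradicting nontriviality. Hence the Keller--Osserman integral must diverge, which is \eqref{kellosres}. This last step is soft relative to the a priori bound, which carries the real novelty.
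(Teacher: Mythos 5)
Your overall strategy is by contraposition: assume $\int^\infty dt/\sqrt{F}<\infty$, change variables $v=\Phi(u)$ to make the subsolution bounded by $V^\ast$, and then extract a uniform a priori bound $\sup u\le M^\ast$ followed by a Liouville argument. The preliminary observations are fine (convergence of the integral does force $f(t)/t\to\infty$, hence $g\to\infty$, and the computation of the inequality for $v$, including the sign of the $-\tfrac12 g(v)\,A\,Dv\cdot Dv$ term, is correct). But the proof has a genuine gap exactly where you say the ``core'' is: the a priori bound is never established. What you offer is a description of an iteration you \emph{would} run, and the two devices you invoke to make it work are not available here. First, a ``Bernstein-type device'' to control the quadratic gradient term requires differentiating the equation, which is impossible for $H^1$ weak solutions of a divergence-form operator with merely measurable $A$ and $L^q_{ul}$ lower-order coefficients; the standard substitute (an exponential change of the test function à la Moser) is not set up, and it is precisely the interaction of that term with the blow-up of $g$ that must be quantified. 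Second, the claim that the divergence of $g$ near $V^\ast$ ``supplies at each step a gain calibrated to make the telescoping product converge'' is the entire quantitative content of the theorem and is asserted, not proved. The concluding Liouville step is also not soft as stated: a bounded nonnegative entire subsolution of $\mathcal{L}[u]\ge 0$ with $c$ of arbitrary sign does not obviously vanish; the local maximum principle plus $u\in H^1(\mathbb{R}^n)$ only gives decay at infinity, and a global maximum principle for $\mathcal{L}$ on $\mathbb{R}^n$ is not free.

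For comparison, the paper's proof is direct and purely local, with no change of unknown and no global bound. The key estimate (Lemma \ref{est from below}, itself a consequence of the Caccioppoli/De Giorgi iteration of Lemma \ref{di giorgi iteration}) says that if $\sup_{B_r(x_0)}u\le 2u(x_0)$ then $u(x_0)/\sqrt{F(u(x_0)/2)}\ge c_0 r$. Chaining this over radii on which the supremum doubles yields Proposition \ref{integral lower bound}, $\int_{u(x_0)/4}^{\sup_{B_R}u}dt/\sqrt{F}\ge cR$, and Corollary \ref{keller osserman} then rescales to make the coefficient norms small, adds an $\varepsilon u$ term so that $\overline{F}(t)\gtrsim t^2$, and iterates over expanding balls $B_{Rk}$ to force the integral to exceed $ck$ for every $k$. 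You would do well to aim at a quantitative local statement of this doubling type rather than at a global $L^\infty$ bound; the latter is both harder and unnecessary.
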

 We recall that even for $\mathcal{L}=\Delta$ it is easy to construct a classical entire positive solution of $\mathcal{L}[u] = f(u)$, if $f$ does not satisfy \eqref{kellosres}.
 \medskip

Our research was initially triggered by a different question: under what conditions does the strong maximum principle (SMP) hold for nonnegative supersolutions of \eqref{maineq}? This problem dates back to 1984, when V\'azquez showed in \cite{V84} that classical nonnegative solutions to
\begin{equation}\label{vasquez MP}
    \Delta u \leq f(u) \quad \text{in} \quad \Omega,
\end{equation}
satisfy the SMP in $\Omega$ provided $f$ behaves like a modulus of continuity, meaning that it is nonnegative, nondecreasing,  $f(0) = 0$, and $f$  meets the additional condition
\begin{equation}\label{PS condition}
     \int_{0}\frac{dt}{\sqrt{F(t)}} = \infty \quad \text{with} \quad F(t) \coloneqq \int_{0}^{t}f(s) \, ds.
\end{equation}
This condition was later shown in \cite{PS00} to be both necessary and sufficient for the validity of the SMP. 
While \eqref{PS condition} allows for nonlinearities which are Lipschitz at zero, such cases fall within the scope of the standard SMP (see \cites{EV, H52,GT}); so \eqref{PS condition} becomes crucial only when dealing with nonlinearities that grow more quickly than Lipschitz at zero. Many subsequent works have extended the V\'azquez SMP to broader settings in terms of the differential operators considered and the dependence of $f$ in $x$, $u$ and $Du$; we refer to \cite{FMQ09, PR18, PS00, PSZ},  and the book \cite{PS} which is mostly dedicated to this topic (many more references are available in these works).  

For operators with unbounded coefficients, the theory remains significantly less developed, since many of the techniques used in the bounded-coefficient setting do not readily extend to the unbounded case. Actually, in the latter setting, the question of the validity of the SMP for a superlinear nonlinearity $f(u)$ had been completely open until the recent work \cite{SS21}, which introduced a novel approach based on an optimized weak Harnack inequality. More specifically, the proof of the SMP in \cite{SS21} relies on an application of the weak Harnack inequality in which the constant is optimized with respect to the norms of the lower order coefficients of the ``linearized" version of \eqref{maineq}. This method bridged variational and non-variational theories, enabling progress beyond the Lipschitz threshold for $f$ when $\mathcal{L}$ has unbounded coefficients; in particular, it allowed for nonlinearities of the form
$$
    f(s) \approx s|\ln(s)|^a, \quad \text{for} \quad a \leq 2.
$$
The case where the right-hand side grows even more rapidly at zero while still satisfying \eqref{PS condition} was stated as an open question in \cite{SS21}. At that time, the answer seemed unclear since \eqref{PS condition} is an ODE-type hypothesis, while, as we explained above, for equations with unbounded coefficients, the usual method of constructing a radial subsolution of the equation (a barrier) in a well-chosen annulus does not work.

On the other hand, a promising direction arises from considering ODEs for the sub- and super-level sets of $u$, as opposed to barrier construction. In \cite{VJ1}, V. Julin explored this idea for an operator in divergence form involving only the principal second-order part. Specifically, in this important advance of the theory of semilinear elliptic PDE he proved the generalized Harnack inequality \eqref{genharn} below  for solutions to  
\begin{equation}\label{julineq}
    \div\left(A(x)Du \right) = f(u),
\end{equation}
by deriving an ODE for the level sets of $u$ that incorporates the right-hand side behavior. This novel for \eqref{julineq} approach is inspired by techniques from the Harnack inequality proof in \cite{DBTR}, and yields a quantified version of the SMP.

The extension of the ODE-related technique from \cite{VJ1} to scenarios involving lower-order coefficients is  delicate, especially in regard to inequalities of Caccioppoli-type. These inequalities must be carefully managed to control the growth of energy terms relative to the size of sub- and super-level sets (this is precisely where the assumption in \eqref{integrability of exponents} plays a critical role and demonstrates its sharpness). Through iterations of these inequalities, we arrive at the following result.

\begin{theorem}\label{main theorem}
Let $u \in H^1(B_2)$ be a nonnegative solution to  
$$
    \mathcal{L}[u] = f(u).
$$  
There exists a constant $C = C(n, \lambda, \Lambda, q, \|b_1\|_{L^q(B_2)}, \|b_2\|_{L^q(B_2)}, \|c\|_{L^{q/2}(B_2)})$ such that  
\begin{equation}\label{genharn}
    \int_{\inf_{B_1} u}^{\sup_{B_1} u} \frac{dt}{\sqrt{F(t)} + t} \leq C.
\end{equation}
\end{theorem}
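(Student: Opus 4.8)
The plan is to follow the strategy pioneered by Julin in \cite{VJ1} for \eqref{julineq}, namely to derive a differential inequality for a suitably normalized measure of the super- and sub-level sets of $u$, and then integrate it. Set $M(t) = |\{u > t\} \cap B_r|$ (or work with the distribution function on a nested family of balls, interpolating the radius as in \cite{DBTR}); the goal is to show that along the portion of the range of $u$ where $f(u)$ dominates, the quantity $t \mapsto$ (oscillation of $u$) can grow only at a rate controlled by $\sqrt{F(t)}+t$, while on the complementary portion the standard De Giorgi--Nash--Moser machinery (in the unbounded-coefficient regime permitted by \eqref{integrability of exponents}) gives a clean Harnack-type decay. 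The key analytic input is a \emph{Caccioppoli inequality} for the truncations $(u-t)_+$ and $(t-u)_+$ in which the contributions of $b_1, b_2, c$ are estimated via H\"older and Sobolev inequalities so that they are absorbed into the energy term, at the cost of constants depending only on the data listed in the statement; this is the step where $q>n$ is used in an essential way, since it is exactly the borderline integrability that lets the lower-order terms be dominated by $\|D u\|_{L^2}$ on small balls with a gain in scaling.

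Concretely, I would proceed as follows. First, establish the refined Caccioppoli inequalities: for a level $t$ and concentric balls $B_\rho \subset B_{\rho'}$, bound $\int_{B_\rho} |D(u-t)_+|^2$ by $C(\rho'-\rho)^{-2}\int_{B_{\rho'}}(u-t)_+^2$ plus a term involving $\int_{\{u>t\}\cap B_{\rho'}} f(u)(u-t)_+$, plus error terms from $b_1,b_2,c$ which, after H\"older with exponents tied to $q$ and Sobolev embedding, take the form $\varepsilon \int |D(u-t)_+|^2 + C_\varepsilon(\text{lower order})$ and can be absorbed. Second, combine this with the weak Harnack / local boundedness estimates (valid here because of \eqref{integrability of exponents}) to get, for a geometric sequence of levels $t_k$, a recursive inequality relating the measures of $\{u>t_k\}$, which—when the nonlinear term is the dominant one—forces $\int \frac{dt}{\sqrt{F(t)}+t}$ over the relevant range to be bounded. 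Third, on the range of $u$ where the term $t$ in the denominator dominates (i.e.\ where $f$ is effectively at most linear, so $\sqrt F(t) \lesssim t$), invoke the ordinary Harnack inequality for $\mathcal{L}$ with unbounded coefficients to get $\sup_{B_1} u \le C \inf_{B_1} u$ on that part, which contributes a bounded amount $\int_a^{Ca} \frac{dt}{t} = \log C$. Gluing the two regimes yields \eqref{genharn}.

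The main obstacle I anticipate is precisely the Caccioppoli step in the presence of the divergence-form drift $\div(b_1 u)$ and the zeroth-order term $c u$: unlike the pure principal-part case of \cite{VJ1}, the test function $(u-t)_+$ interacts with $b_1 u$ to produce a term $\int b_1 u \cdot D(u-t)_+$ that does \emph{not} vanish and is not small merely because $(u-t)_+$ is small in $L^2$—one must control $u$ itself (not just $u-t$) on the super-level set, which forces a careful bootstrapping between the level-set ODE and an a priori $L^\infty$ bound on $u$ coming from local boundedness. Managing the scaling so that the constants depend only on the $L^q(B_2)$ and $L^{q/2}(B_2)$ norms (not on finer local structure) is the crux, and it is here, as the authors note, that the exponent condition $q>n$ shows its sharpness. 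A secondary technical point is handling the interpolation of radii à la DiBenedetto--Trudinger so that the error terms telescope rather than accumulate; this is routine in spirit but must be done with the nonlinear term and the lower-order terms simultaneously in play.
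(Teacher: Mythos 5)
Your overall strategy is the correct one and is the same as the paper's (and Julin's): Caccioppoli inequalities for the truncations with the lower-order terms absorbed via H\"older--Sobolev using $q>n$, a differential inequality for the measures of the level sets, and integration of that inequality. You also correctly single out the problematic interaction $\int b_1 u\cdot D(u-t)_+$, which is indeed where $u$ itself (not just $(u-t)_+$) must be controlled on the superlevel set. However, two of your steps contain genuine gaps.

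First, your treatment of the ``linear regime'' would fail as stated. The dichotomy between $\sqrt{F(t)}\lesssim t$ and $\sqrt{F(t)}\gg t$ is a dichotomy in the \emph{value} $t$, not in space: you cannot restrict $u$ to the portion of the ball where its values lie in one regime and apply the classical Harnack inequality there, and a truncation of $u$ is not a solution of the equation. The paper handles both regimes simultaneously through the minimum in the level-set ODE of Proposition \ref{ODE for level sets}, $-\mu'(t)\ge c\min\bigl\{F(t)^{-1/2}\mu(t)^{\frac{n-1}{n}},\,t^{-1}\mu(t)\bigr\}$, whose integration is precisely what produces the weight $(\sqrt{F(t)}+t)^{-1}$ in \eqref{genharn}. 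Moreover, the upper part of the integral, from the median level $t_0$ up to $\sup_{B_1}u$, is not reachable by a Harnack inequality at all; it requires a separate subsolution-side argument (a doubling sequence of radii $R_j$ with $\sup_{B_{R_j}}u=2\sup_{B_{R_{j-1}}}u$, the lower bound $M_{j-1}/\sqrt{F(M_{j-1}/2)}\ge c\,r_j$ of Lemma \ref{est from below}, and the local maximum principle of Lemma \ref{local max princ consequence} to bound $\mu$ from below along this sequence). None of this appears in your sketch, and your second step (``a recursive inequality relating the measures of $\{u>t_k\}$'') is too vague to substitute for it.

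Second, you do not identify the key quantitative change that the unbounded coefficients force relative to \cite{VJ1}: the De Giorgi iteration for the infimum only closes below the threshold $|E_t\cap B_2|\le\delta_0\bigl(t/(\sqrt{F(t)}+t)\bigr)^{\frac{qn}{q-n}}$ (Lemma \ref{sublevel measure decay}); the exponent $\frac{qn}{q-n}$, absent in Julin's case, comes from the loss $|E_t\cap B_R|^{-1/q}$ in the Caccioppoli inequality and degenerates as $q\downarrow n$. This modified threshold must then be fed back into the level-set ODE (which is where the exponent $1-\frac{q-n}{qn}$ replaces $1-\frac1n$), and this is exactly where the sharpness of \eqref{integrability of exponents} bites. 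Finally, since the coefficient norms can only be made small enough for absorption after rescaling to small balls, the argument must be closed by a Harnack chain to return to $B_1$; you allude to the scaling issue but this final gluing step is missing.
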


The overall scheme of the proof of Theorem \ref{main theorem} is similar to that of the main result in \cite{VJ1}, so we outline it only briefly. First, as in \cite{VJ1}  we establish the differential inequality for supersolutions
$$
    \eta'(t) \geq c \min\left\{\frac{1}{\sqrt{F(t)}}\eta(t)^{\frac{n-1}{n}}, \frac{1}{t}\eta(t) \right\}, \quad \text{for} \quad \eta(t) = |\{x \in \Omega \colon u(x) < t \}|,
$$
along with an analogous estimate for the measure of sublevel sets. Then we use this inequality to obtain the bound
\begin{equation}\label{etaest}
    \eta(t) \lesssim
  \left(\frac{t}{\sqrt{F(t)} + t} \right)^\frac{q n}{q-n},
\end{equation}
at a certain point, which extends the corresponding estimate from \cite{VJ1}, where \eqref{etaest} was proved without the power $qn/(q-n)$, for \eqref{julineq}. The estimate \eqref{etaest} enables us, through an iterative application of a Caccioppoli inequality, to refine our control over the infimum, thereby estimating part of the integral in \eqref{genharn}. To handle the remaining portion of the integral, we apply the differential inequality for the sublevel sets, which ensures that the solution remains bounded over a set of large measure. Then, on the subsolution side, we derive a second Caccioppoli inequality, which provides control over the supremum.

As we noted above, the main challenge in extending this approach to the unbounded setting lies in deriving Caccioppoli estimates that exhibit a sufficiently strong decay of the energy in relation to the size of the sub- and superlevel sets. Notably, the condition \eqref{integrability of exponents} plays a sharp role in achieving this, at least within the scope of our technique.

As a consequence of the generalized Harnack principle \eqref{genharn}, we establish the strong maximum principle (SMP) for \eqref{maineq}. For this, we also need a result from \cite{SS21} to deal with the fact that the proof of Theorem \ref{main theorem} relies on information from both sub- and supersolutions, while the SMP is stated only for supersolutions. 
 
\begin{theorem}\label{maximum principle}
Let $u \in H^1(\Omega)$ be a nonnegative solution to  
$$
    \mathcal{L}[u] \leq f(u) \quad \text{in} \quad \Omega,\qquad\mbox{and} 
\quad
    \int_{0}\frac{dt}{\sqrt{F(t)}+t} = \infty.
$$  
Then $u > 0$ or $u \equiv 0$ in $\Omega$.
\end{theorem}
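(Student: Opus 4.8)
The plan is to reduce the strong maximum principle for supersolutions to the generalized Harnack inequality of Theorem \ref{main theorem}, which however is stated for (two-sided) solutions. The bridge is the observation, already exploited in \cite{SS21}, that a nonnegative supersolution $u$ of $\mathcal{L}[u]\le f(u)$ which touches zero can be compared near its zero set with an honest solution of an auxiliary equation to which Theorem \ref{main theorem} applies. Concretely, suppose for contradiction that $u(x_0)=0$ for some $x_0\in\Omega$ but $u\not\equiv 0$; by shrinking and rescaling we may assume $x_0=0$ and $B_2\Subset\Omega$, and that $M:=\sup_{B_1}u>0$. The first step is to invoke the result from \cite{SS21} to produce, on a ball $B_{2r}(y)$ with $u>0$ nearby, a function $v\in H^1$ solving $\mathcal{L}[v]=f(v)$ (or a suitable cousin of it) with $0\le v\le u$, $v$ not identically zero, and $\inf v$ as small as we wish — in effect, a solution whose infimum over the inner ball can be pushed to $0^+$ while its supremum stays bounded below.

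The second step is to apply Theorem \ref{main theorem} to $v$ on that ball (after translating and scaling to $B_2$, which only changes the constant $C$ through the fixed local norms of the coefficients). This yields
\[
\int_{\inf_{B_1}v}^{\sup_{B_1}v}\frac{dt}{\sqrt{F(t)}+t}\le C,
\]
with $C$ independent of $v$. Letting $\inf_{B_1}v\to 0^+$ along the family constructed in Step 1, while $\sup_{B_1}v\ge \delta>0$ stays bounded away from zero, the left-hand side converges to
\[
\int_{0}^{\delta}\frac{dt}{\sqrt{F(t)}+t},
\]
which is $+\infty$ by hypothesis. This contradicts the uniform bound $C$, and hence no interior zero is possible unless $u\equiv 0$.

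The routine technical points are the scaling invariance of the hypotheses (the uniformly local structure and $q>n$ are preserved, and the constant in Theorem \ref{main theorem} depends only on fixed local norms over $B_2$, so a covering/chaining argument propagates positivity from one ball to a neighborhood and then, by connectedness of $\Omega$, everywhere) and the verification that the auxiliary $v$ lies in $H^1_{\mathrm{loc}}$ with $f(v)\in H^{-1}$ so that Theorem \ref{main theorem} is genuinely applicable. The main obstacle — and the reason the cited input from \cite{SS21} is needed — is precisely Step 1: Theorem \ref{main theorem} requires a two-sided solution, whereas the SMP hypothesis only gives a supersolution, so one must manufacture a true solution $v$ trapped below $u$ near the zero set without losing the quantitative control (namely that $\sup v$ does not degenerate to $0$ as $\inf v$ does). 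Once that comparison solution is in hand, the contradiction is immediate from the divergence of the Vázquez-type integral.
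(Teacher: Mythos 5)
Your proposal follows essentially the same route as the paper: both reduce the SMP to Theorem \ref{main theorem} by using the generalized sub-/supersolution method from \cite{SS21} to manufacture an honest solution $\tilde u$ of $\mathcal{L}[\tilde u]=f(\tilde u)$ trapped between $0$ and $u$ on a ball $B'$ where $\mathrm{essinf}_{B'}u=0$ but the boundary trace of $u$ is nontrivial, and then contradict the divergence of the V\'azquez-type integral. The only cosmetic difference is that the paper works with a single such $\tilde u$ (shown bounded and continuous via \cite[Theorem 8.15]{GT}, so that $\min_{B'}\tilde u=0$ exactly and one application of Theorem \ref{main theorem} suffices), rather than a limiting family with infimum tending to $0^+$.
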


The paper is organized as follows. We dedicate Section \ref{prelim-sct} to recalling some basic facts and known results. In Section \ref{supersolution-sct}, we prove estimates for nonnegative supersolutions, as well as the ODEs for the level sets. In Section \ref{subsolution-sct}, we seek estimates for subsolutions, and, in particular, prove a Cacciopoli inequality. The proof of the Keller-Osserman condition, Theorem \ref{Keller-Osserman condition}, is done in subsection \ref{subsct keller-osserman}, the generalized Harnack inequality, Theorem \ref{main theorem}, is done in Subsection \ref{main thm-subsct}, and the strong maximum principle in Subsection \ref{max princ-subsct}.

\section{Preliminaries}\label{prelim-sct}

In this section, we recall some well-known definitions and results. We use the same notation as in \cite{VJ1} and refer to that paper for more details on the background, as well as more references. 

Given a symmetric matrix $A \in \mbox{Sym}(n)$, $\mbox{Spec}(A)$ denotes the spectrum of $A$, that is, the set of eigenvalues of $A$. For a set $E \subseteq \mathbb{R}^n$, we denote by $\mathcal{X}_{E}$  the indicator function of  $E$. With the letter $\Omega \subset \mathbb{R}^n$, we denote a domain where a given equation is set. We denote with $\|\cdot\|_q$ the $L^q$-norm of a function when the domain of integration is clear from the context.  Solutions to \eqref{maineq} are understood in the usual weak sense given by the following definition.

\begin{definition}\label{definition of soolution}
We say that $u\in H^1_{\mathrm{loc}}(\Omega)$ satisfies $\mathcal{L}[u] \leq f(u)$ in $\Omega$ in the weak sense (supersolution) if for every $x_0 \in \Omega$ and nonnegative $w \in H^1_0(B_r(x_0))$, with $B_r(x_0) \subset \Omega$, we have
$$
    -\int_{B_r(x_0)} A(x)Du \cdot Dw + ub_1(x)\cdot Dw + \int_{B_r(x_0)} wb_2(x)\cdot Du + c(x)u\,w \leq \int_{B_r(x_0)}w\,f(u).
$$
Likewise, $u$ satisfies $\mathcal{L}[u] \geq f(u)$ in $\Omega$ in the weak sense (subsolution) if for every $x_0 \in \Omega$ and nonnegative $w \in H^1_0(B_r(x_0))$, with $B_r(x_0) \subset \Omega$, we have
$$
    -\int_{B_r(x_0)} A(x)Du \cdot Dw + ub_1(x)\cdot Dw + \int_{B_r(x_0)} wb_2(x)\cdot Du + c(x)u\,w \geq \int_{B_r(x_0)}w\,f(u).
$$
\end{definition}
By smoothing the operator's coefficients, we can replicate Julin's approximation argument (see \cite{VJ1}*{Page 883}) and assume that the solution and all the coefficients of $\mathcal{L}$ are smooth. Let us also define 
$$
    E_{t} \coloneqq \{x \in \Omega \colon u(x) < t\}, \quad \text{and} \quad A_t \coloneqq \{x \in \Omega \colon u(x) \geq t \},
$$
the sublevel and superlevel sets of a function $u$, respectively.

We recall the concept of perimeter, which plays a central role in deriving the ODE for the sub- and super-level sets.

\begin{definition}\label{def of perimeter}
Let $U \subset \mathbb{R}^n$ be an open set and $E \subset \mathbb{R}^n$. The perimeter of $E$ in $U$ is defined as
$$
    P(E,U) \coloneqq \sup\left\{\int_E \mbox{div}\, \varphi \colon \varphi \in C^1_0(U,\mathbb{R}^n),\, \|\varphi\|_{L^\infty(U)} \leq 1 \right\}.
$$
\end{definition}

Next, we recall the fundamental relative isoperimetric inequality.

\begin{lemma}\label{relativ isoperimetric inequality}
Let $E \subset \mathbb{R}^n$ be a set of finite perimeter in $B_r$. Then
$$
    P(E,B_r) \geq c \min\left\{|E \cap B_r|^{\frac{n-1}{n}},|B_r \backslash E|^{\frac{n-1}{n}} \right\},
$$
for a dimensional constant $c>0$.
\end{lemma}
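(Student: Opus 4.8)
The plan is to reduce the statement to the Sobolev--Poincar\'e inequality for functions of bounded variation, applied to the characteristic function $\mathcal{X}_E$. First one reduces to $r=1$ by scaling: under the dilation $x\mapsto x/r$ the perimeter $P(E,B_r)$ is multiplied by $r^{n-1}$, while each of the volumes $|E\cap B_r|$ and $|B_r\setminus E|$ is multiplied by $r^n$, so each volume term raised to the power $(n-1)/n$ also picks up the factor $r^{n-1}$; consequently a dimensional constant valid on $B_1$ is valid on every $B_r$.

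So let $E$ have finite perimeter in $B_1$, put $\omega_n\coloneqq|B_1|$, $a\coloneqq|E\cap B_1|$, $b\coloneqq|B_1\setminus E|=\omega_n-a$, and $t\coloneqq a/\omega_n\in[0,1]$. Then $u\coloneqq\mathcal{X}_E\in BV(B_1)$ with total variation $|Du|(B_1)=P(E,B_1)$ and average $\frac{1}{\omega_n}\int_{B_1}u=t$, so the $BV$ Sobolev--Poincar\'e inequality on the unit ball gives a dimensional constant $C_n$ with
\[
\Big(\int_{B_1}\big|u-t\big|^{\frac{n}{n-1}}\,dx\Big)^{\frac{n-1}{n}}\le C_n\,P(E,B_1).
\]
The integral on the left equals $a(1-t)^{\frac{n}{n-1}}+b\,t^{\frac{n}{n-1}}=\omega_n\,t(1-t)\big(t^{\frac{1}{n-1}}+(1-t)^{\frac{1}{n-1}}\big)$, and since $x\mapsto x^{1/(n-1)}$ is subadditive on $[0,\infty)$ for $n\ge2$ we have $t^{1/(n-1)}+(1-t)^{1/(n-1)}\ge(t+(1-t))^{1/(n-1)}=1$; hence the left integral is at least $\omega_n\,t(1-t)$. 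Combining,
\[
P(E,B_1)\ \ge\ C_n^{-1}\,\omega_n^{\frac{n-1}{n}}\big(t(1-t)\big)^{\frac{n-1}{n}}.
\]

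Finally, $t(1-t)\ge\tfrac12\min\{t,1-t\}$ (if $t\le\tfrac12$ then $1-t\ge\tfrac12$, so $t(1-t)\ge t/2$, and symmetrically), and $\min\{t,1-t\}=\omega_n^{-1}\min\{a,b\}=\omega_n^{-1}\min\{|E\cap B_1|,|B_1\setminus E|\}$, so the powers of $\omega_n$ cancel and we obtain $P(E,B_1)\ge c\,\min\{|E\cap B_1|^{\frac{n-1}{n}},|B_1\setminus E|^{\frac{n-1}{n}}\}$ with $c=2^{-(n-1)/n}C_n^{-1}$, which is the asserted inequality for $r=1$. I do not expect a genuine obstacle here: the only point requiring care is invoking the correct scale-invariant form of the $BV$ Sobolev--Poincar\'e inequality on a ball, which itself rests on the coarea formula together with the global isoperimetric inequality in $\mathbb{R}^n$ (see the standard references on $BV$ functions). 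An alternative, more self-contained route — closer in spirit to the coarea arguments used elsewhere in the paper — would be to apply the coarea formula directly to a suitable truncation of $u$ and reduce to a one-variable estimate, but the argument above is shorter.
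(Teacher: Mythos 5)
Your argument is correct: the scaling reduction to $B_1$, the identity $\int_{B_1}|\mathcal{X}_E-t|^{n/(n-1)}=\omega_n\,t(1-t)\bigl(t^{1/(n-1)}+(1-t)^{1/(n-1)}\bigr)$, the subadditivity bound, and the elementary estimate $t(1-t)\ge\tfrac12\min\{t,1-t\}$ all check out, and the powers of $\omega_n$ do cancel as claimed. The paper does not prove this lemma at all — it simply recalls it as a classical fact — and your derivation from the $BV$ Sobolev--Poincar\'e inequality on the ball is precisely the standard textbook proof of the relative isoperimetric inequality (as in Evans--Gariepy), so there is nothing to reconcile with the paper's treatment.
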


The following estimate is a simple consequence of H\"older's inequality, but will be handy when obtaining the ODE for the super and sub-level sets.

\begin{lemma}\label{ineq for the perimeter}
Let $u \in C^\infty(B_r(x_0))$ be such that $|Du|>0$ in $\{u = t\}$, then
$$
    \left(P(E_t,B_r(x_0))\right)^2 \leq \|Du\|_{L^1(\{u = t\} \cap B_r(x_0))}\, \left\|\frac{1}{|Du|}\right\|_{L^1(\{u = t\}\cap B_r(x_0))}.
$$
\end{lemma}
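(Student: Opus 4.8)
The plan is to show that, under the stated hypotheses, the perimeter of $E_t$ in $B_r(x_0)$ coincides with the $(n-1)$-dimensional Hausdorff measure of the level set $\{u=t\}\cap B_r(x_0)$, and then to conclude by a single application of the Cauchy--Schwarz inequality along that hypersurface (the $L^1$ norms on the right-hand side being understood with respect to $\mathcal{H}^{n-1}$ on $\{u=t\}$).

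First I would record that, since $u$ is smooth and $|Du|>0$ on the level set $\Sigma_t := \{u=t\}$, the implicit function theorem shows that $\Sigma_t$ is a smooth embedded hypersurface in $B_r(x_0)$, with unit normal $\nu = Du/|Du|$; moreover a point of $B_r(x_0)$ lies on $\partial E_t$ precisely when $u=t$ there, because points with $u<t$ (resp. $u>t$) are interior to $E_t$ (resp. to its complement). Hence $\partial E_t\cap B_r(x_0) = \Sigma_t\cap B_r(x_0)$ and $E_t$ is a smooth domain near this set.

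Next, applying the Gauss--Green (divergence) theorem on the smooth set $E_t$ to test vector fields $\varphi\in C^1_0(B_r(x_0),\mathbb{R}^n)$ with $\|\varphi\|_{L^\infty(B_r(x_0))}\le 1$ gives $\int_{E_t}\div\varphi = \int_{\Sigma_t\cap B_r(x_0)}\varphi\cdot\nu\, d\mathcal{H}^{n-1} \le \mathcal{H}^{n-1}(\Sigma_t\cap B_r(x_0))$, while the reverse bound (so that equality holds in the supremum defining $P(E_t,B_r(x_0))$ in Definition \ref{def of perimeter}) follows by choosing $\varphi$ approximating $\nu$ on an exhausting sequence of compact subsets of $\Sigma_t\cap B_r(x_0)$. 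Thus $P(E_t,B_r(x_0)) = \mathcal{H}^{n-1}(\Sigma_t\cap B_r(x_0))$. This identification of the perimeter with the surface measure of the level set is the only point that is not completely routine; it is classical for sets with smooth boundary, but it is worth spelling out the short argument above.

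Finally I would write, on $\Sigma_t\cap B_r(x_0)$,
$$
\mathcal{H}^{n-1}(\Sigma_t\cap B_r(x_0)) = \int_{\Sigma_t\cap B_r(x_0)} |Du|^{1/2}\,|Du|^{-1/2}\, d\mathcal{H}^{n-1} \le \left(\int_{\Sigma_t\cap B_r(x_0)} |Du|\,d\mathcal{H}^{n-1}\right)^{1/2}\left(\int_{\Sigma_t\cap B_r(x_0)} \frac{1}{|Du|}\,d\mathcal{H}^{n-1}\right)^{1/2}
$$
by Cauchy--Schwarz, and then square both sides to obtain exactly the claimed inequality. If $\mathcal{H}^{n-1}(\Sigma_t\cap B_r(x_0))$ happens to be infinite, the pointwise bound $1\le\frac12(|Du|+|Du|^{-1})$ forces at least one of the two factors on the right to be infinite as well, so the estimate holds trivially in that case.
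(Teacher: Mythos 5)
Your proof is correct and follows essentially the same route as the paper's: the divergence theorem expresses the perimeter as a flux through the level surface $\{u=t\}$ with unit normal $\nu=Du/|Du|$, and Cauchy--Schwarz with the splitting $|Du|^{1/2}\cdot|Du|^{-1/2}$ gives the stated bound. The only (harmless) difference is that you first establish the exact identity $P(E_t,B_r(x_0))=\mathcal{H}^{n-1}(\{u=t\}\cap B_r(x_0))$, which requires the reverse inequality via vector fields approximating $\nu$, whereas the paper simply applies Cauchy--Schwarz to each test integral $\int_{E_t}\div\,\varphi$ and takes the supremum, so only the easy upper bound for the perimeter is ever needed.
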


\begin{proof}
Let $\varphi$ be as in the definition of perimeter. Then, by the Divergence Theorem, we have
$$
    \int_{E_t} \mbox{div}\, \varphi\,dx = \int_{\partial^* E_t} \varphi \cdot \nu\, d\mathcal{H}^{n-1},
$$
where $\partial^* E_t$ denotes the reduced boundary of $E_t$. Since $\partial^* E_t \subseteq \{u=t\}$, we obtain $\nu = Du/|Du|$, and so
$$
    \int_{E_t} \mbox{div}\, \varphi \,dx = \int_{\partial^* E_t} \varphi \cdot \frac{Du}{|Du|}\, d\mathcal{H}^{n-1} = \int_{\partial^* E_t \cap B_r(x_0)} \varphi \cdot \frac{Du}{|Du|^{\frac{1}{2}}} \frac{1}{|Du|^{\frac{1}{2}}}\, d\mathcal{H}^{n-1},
$$
where we used that $\varphi$ is supported in $B_r(x_0)$. By the H\"older inequality we have, since  $\|\varphi\|_{L^\infty(B_r(x_0))} \leq 1$,
\begin{eqnarray*}
    \int_{E_t} \mbox{div}\, \varphi \, dx & \leq & \left\{\int_{\partial^* E_t \cap B_r(x_0)}|\varphi|^2 |Du| \, d\mathcal{H}^{n-1} \right\}^{\frac{1}{2}} \left\{\int_{\partial^* E_t \cap B_r(x_0)} \frac{1}{|Du|}\, d\mathcal{H}^{n-1}\right\}^{\frac{1}{2}}\\
    & \leq & \left\{\int_{\partial^* E_t \cap B_r(x_0)} |Du| \, d\mathcal{H}^{n-1}\right\}^{\frac{1}{2}} \left\{\int_{\partial^* E_t \cap B_r(x_0)} \frac{1}{|Du|}\, d\mathcal{H}^{n-1}\right\}^{\frac{1}{2}}\\
    & = & \|Du\|_{L^1(\partial^* E_t \cap B_r(x_0))}^{\frac{1}{2}}\, \left\|\frac{1}{|Du|} \right\|_{L^1(\partial^* E_t \cap B_r(x_0))}^\frac{1}{2},
\end{eqnarray*}
and the proof is done by taking the supremum in $\varphi$, since $\partial^* E_t \subseteq \{u=t\}$.
\end{proof}

We will also need the following real-analysis result, whose proof can be found in \cite{EG}*{Lemma 7.1}.
\begin{lemma}\label{useful real analysis result}
Let $b>0$ and $(x_i)_{i \in \mathbb{N}}$ be a sequence of real positive numbers such that
$$
    x_{i+1} \leq CB^ix_i^{1+b},
$$
with $C>0$ and $B>1$. If $x_0 \leq C^{-\frac{1}{b}}B^{-\frac{1}{b^2}}$, then 
$$
    \lim_{i \to \infty}x_i = 0.
$$
\end{lemma}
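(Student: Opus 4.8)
The plan is to establish, by induction on $i$, the explicit geometric decay bound
\[
x_i \;\le\; B^{-i/b}\,x_0 \qquad\text{for all } i\ge 0,
\]
from which the conclusion is immediate: since $B>1$ and $b>0$, the factor $B^{-i/b}=(B^{-1/b})^i$ tends to $0$, and hence so does $x_i$. The ansatz $x_i\le \mu^i x_0$ with $\mu=B^{-1/b}\in(0,1)$ is the natural one here: the recursion $x_{i+1}\le CB^i x_i^{1+b}$ pits the exponent $1+b>1$ against the geometrically growing weight $B^i$, so one expects purely geometric decay of the iterates precisely when the seed $x_0$ is small enough to win this competition.

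For the induction, the base case $i=0$ is the trivial equality $x_0\le x_0$. Assuming $x_i\le B^{-i/b}x_0$, I would substitute into the hypothesis and use $1-\tfrac{1+b}{b}=-\tfrac1b$ to get
\[
x_{i+1}\;\le\; CB^i x_i^{1+b}\;\le\; CB^i\big(B^{-i/b}x_0\big)^{1+b}\;=\;C\,x_0^{1+b}\,B^{\,i-i(1+b)/b}\;=\;C\,x_0^{1+b}\,B^{-i/b}.
\]
Hence the inductive step closes as soon as $C x_0^{1+b}B^{-i/b}\le x_0 B^{-(i+1)/b}$, that is, $Cx_0^{\,b}\le B^{-1/b}$, which is exactly the standing smallness assumption $x_0\le C^{-1/b}B^{-1/b^2}$. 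This completes the induction, and with it the proof.

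There is no serious obstacle: the only points requiring (minimal) care are the bookkeeping of the exponent of $B$ in the inductive step and the observation that the threshold $x_0\le C^{-1/b}B^{-1/b^2}$ is precisely what makes that step close. An alternative route is to pass to $y_i:=-\log x_i$, converting the recursion into the affine inequality $y_{i+1}\ge (1+b)y_i-(\log B)\,i-\log C$; since the homogeneous growth rate $1+b>1$ dominates the linear term $(\log B)\,i$, one gets $y_i\to\infty$ once $y_0$ is large, but this requires slightly more bookkeeping and produces the same threshold, so the direct induction above is preferable. This is the classical De Giorgi--DiBenedetto fast-convergence lemma; see \cite{EG}.
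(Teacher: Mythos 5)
Your proof is correct: the induction hypothesis $x_i\le B^{-i/b}x_0$ closes exactly under the stated smallness condition $x_0\le C^{-1/b}B^{-1/b^2}$, and the exponent bookkeeping $i-i(1+b)/b=-i/b$ is right. The paper does not prove this lemma itself but cites Giusti's book, where the argument is precisely this induction, so your approach coincides with the standard one.
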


\section{Refined estimates for supersolutions}\label{supersolution-sct}
In this section, we obtain estimates for weak solutions of
$$
    \mathcal{L}[u] \leq f(u).
$$
We recall that hereafter $\mathcal{L}$ has the form \eqref{form of the eq} and the lower order coefficients satisfy hypothesis \eqref{integrability of exponents}.

The first result is a Caccioppoli-type inequality, establishing a decay estimate essential for deriving the ODE governing the sub- and superlevel sets. The presence of unbounded terms in the operator slows the decay rate, precisely reflecting the integrability of the lower-order terms.

\begin{proposition}\label{cacciopoli inequality}
Assume $u \in C^\infty(B_R)$ is a nonnegative supersolution of
$$
    \mathcal{L}[u] \leq f(u).
$$
For any $0 < r < R$, we have the inequality
\begin{eqnarray}
    C^{-1}\|Du\|_{L^2(E_t \cap B_r)} & \leq & \left(\sqrt{F(t)} + \frac{t}{R-r} \right)|E_t \cap B_R|^{\frac{1}{2}}  \label{estimate in L^2} \\ 
    & + &  \frac{t}{\sqrt{R-r}}|E_t \cap B_R|^{\frac{1}{2}-\frac{1}{2q}}+  t|E_t \cap B_R|^{\frac{1}{2}-\frac{1}{q}}. \nonumber
\end{eqnarray}
Furthermore, for almost every $t>0$, there  holds
\begin{eqnarray*}
    C^{-1}\int_{\{u=t \} \cap B_r} |Du|  &\leq & \left(\frac{F(t)}{t}+\frac{\sqrt{F(t)}}{R-r} +\frac{t}{(R-r)^2} \right)|E_t \cap B_R| + t|E_t \cap B_R|^{1-\frac{2}{q}}\\
    & + & \left(\sqrt{F(t)} +\frac{t}{R-r} \right)|E_t \cap B_R|^{1 - \frac{1}{q}} + \frac{t}{\sqrt{R-r}}|E_t \cap B_R|^{1 - \frac{3}{2q}}\\
    & + & \frac{t}{(R-r)^{3/2}}|E_t \cap B_R|^{1-\frac{1}{2q}},
\end{eqnarray*}
where $C = C(n,\lambda,\Lambda,\|b_1\|_{q},\|b_2\|_{q},\|c\|_{q/2})$.
\end{proposition}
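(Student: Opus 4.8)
The plan is to derive both inequalities from testing the weak supersolution formulation of $\mathcal{L}[u]\le f(u)$ against a carefully chosen truncation of $u$ localized by a cutoff. For the first, $L^2$-estimate, I would fix a cutoff $\xi\in C^\infty_0(B_R)$ with $\xi\equiv 1$ on $B_r$, $0\le\xi\le1$ and $|D\xi|\le C/(R-r)$, and test against $w=(t-u)^+\xi^2\ge0$, which is an admissible nonnegative test function. Since $Dw = -\,\mathcal{X}_{E_t}\,\xi^2\,Du + 2(t-u)^+\xi\,D\xi$, the principal term $-\int A Du\cdot Dw$ produces $\int_{E_t} A Du\cdot Du\,\xi^2 \ge \lambda\int_{E_t}|Du|^2\xi^2$ (the good term) plus a cross term $-2\int_{E_t}(t-u)^+\xi\, A Du\cdot D\xi$ that is absorbed by Young's inequality at the cost of $\frac{\Lambda^2}{\lambda}\frac{t^2}{(R-r)^2}|E_t\cap B_R|$. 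The right-hand side contributes $\int_{E_t} (t-u)^+\xi^2 f(u)\le t\, f(t)\,|E_t\cap B_R|$, and since $F$ is convex-ish one has $t f(t)\le 2F(2t)$, but more cleanly $\int_0^t f\le F(t)$ forces only $t f(t)$; here I would instead use the standard trick of testing against $w=(F(t)-F(u))^+\xi^2$ or simply bound $\int_{E_t}(t-u)^+f(u)\le \int_{E_t} F(t)\le F(t)|E_t\cap B_R|$ via integration by parts in $u$, giving the $\sqrt{F(t)}|E_t\cap B_R|^{1/2}$ term after taking square roots. The lower-order terms $-\int u b_1\cdot Dw$, $\int w\, b_2\cdot Du$, $\int c\,u\,w$ are where the uniformly-local hypothesis enters: on $E_t$ one has $0\le u< t$ and $0\le w\le t$, so by Hölder with exponents tuned to $b_1,b_2\in L^q$, $c\in L^{q/2}$ one gets factors $\|b_i\|_q |E_t\cap B_R|^{1/q}$ and $\|c\|_{q/2}|E_t\cap B_R|^{2/q}$ multiplying $\|Du\|_{L^2(E_t\cap B_R)}$ or $t\,|E_t\cap B_R|^{1/2}$ or $t/(R-r)\,|E_t\cap B_R|^{1/2}$; after Young's inequality to absorb the $\|Du\|_{L^2}$-factors into the good term, these reorganize exactly into the three displayed terms with the fractional exponents $\tfrac12-\tfrac1{2q}$ and $\tfrac12-\tfrac1q$.

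For the second inequality, the coarea formula reads $\int_{\{u=t\}\cap B_r}|Du|^{-1}\,|Du|\,d\mathcal H^{n-1}$ is not quite what we want; instead, differentiating $\int_{E_t\cap B_r}|Du|^2 = \int_0^t\big(\int_{\{u=s\}\cap B_r}|Du|\,d\mathcal H^{n-1}\big)ds$ in $t$ gives, for a.e.\ $t$, $\int_{\{u=t\}\cap B_r}|Du|\,d\mathcal H^{n-1} = \frac{d}{dt}\int_{E_t\cap B_r}|Du|^2$. So I would take the first inequality, square it, and differentiate the resulting bound for $\int_{E_t\cap B_r}|Du|^2$ in $t$. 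Squaring \eqref{estimate in L^2} produces (by $(a+b+c)^2\le 3(a^2+b^2+c^2)$) the terms $\big(F(t)+\tfrac{t^2}{(R-r)^2}\big)|E_t\cap B_R|$, $\tfrac{t^2}{R-r}|E_t\cap B_R|^{1-1/q}$, $t^2|E_t\cap B_R|^{1-2/q}$ (plus mixed cross terms which are dominated by these via AM-GM and monotonicity of $|E_t|$). Differentiating in $t$: the derivative of $F(t)|E_t\cap B_R|$ is $f(t)|E_t|+F(t)\eta'(t)$ where $\eta(t)=|E_t\cap B_R|$ is nondecreasing; one bounds $f(t)\le F(t)/t\cdot(\text{const})$? — no, rather one uses $\int_0^t f = F$ so $t f(t)\le$ does not follow, but the desired right-hand side has a term $\tfrac{F(t)}{t}|E_t\cap B_R|$, which suggests that after differentiation one integrates back or uses that $\frac{d}{dt}F(t)|E_t|\le f(t)|E_t| + F(t)\eta'(t)$ and then re-expresses $f(t)|E_t|$ — I would handle this by instead differentiating $\int_{E_t}|Du|^2$ bounded above by the squared quantity and noting the monotone envelope. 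The terms $\tfrac{t^2}{(R-r)^2}\eta$ differentiate to $\tfrac{2t}{(R-r)^2}\eta + \tfrac{t^2}{(R-r)^2}\eta'$; but $\eta'\ge0$ and we want an upper bound, so actually one must be more careful: I would differentiate the \emph{bound} $\Phi(t):=3[\ldots]$ which is a.e.\ differentiable and increasing, giving $\frac{d}{dt}\int_{E_t\cap B_r}|Du|^2 \le \Phi'(t)$ for a.e.\ $t$ (since $\int_{E_t\cap B_r}|Du|^2\le\Phi(t)$ with equality structure failing — this monotonicity argument needs the integrand on the left to also be the derivative of something monotone, which it is). Then $\Phi'(t)$ expands by the product rule into the six listed terms once one uses $F'(t)=f(t)$, $F(t)\le t f(t)$ is false so instead $tf(t) \le 2\big(F(2t)-F(t)\big)\cdot(\ldots)$ — cleanest is: $f(t)\le \frac{2}{t}\int_{t/2}^t f \le \frac{2}{t}F(t)$ using monotonicity of $f$, which gives $f(t)|E_t|\le \frac{2F(t)}{t}|E_t|$, producing the $\tfrac{F(t)}{t}|E_t\cap B_R|$ term exactly, and $\sqrt{F(t)}\cdot\frac{d}{dt}(\ldots)$ cross terms from differentiating $\sqrt{\Phi}$-type pieces give $\tfrac{\sqrt{F(t)}}{R-r}|E_t|$, etc.

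The main obstacle I anticipate is the bookkeeping of the lower-order coefficient terms through Hölder with the right conjugate exponents so that the powers of $|E_t\cap B_R|$ come out as stated (the $1/(2q)$ and $1/q$ fractional shifts), together with the differentiation step for the second inequality: one must justify that $t\mapsto\int_{E_t\cap B_r}|Du|^2$ is absolutely continuous with derivative $\int_{\{u=t\}}|Du|\,d\mathcal H^{n-1}$ for a.e.\ $t$ (Sard's theorem plus coarea gives $|Du|>0$ on $\{u=t\}$ for a.e.\ $t$, as already assumed in Lemma~\ref{ineq for the perimeter}), and that differentiating an inequality $g(t)\le\Phi(t)$ between monotone functions at points of differentiability of both is legitimate here because $g$ itself is monotone nondecreasing — so $g'(t)\le\Phi'(t)$ fails in general, but what is true is $g'(t)\le\Phi'(t)$ at common Lebesgue points \emph{is} what we use after noting $g(t)-g(s)\le\Phi(t)-\Phi(s)$ for $s<t$, which does hold since $\Phi$ dominates $g$ and both start comparably — this is the delicate point and I would phrase it as: for $0<s<t$, $g(t)\le\Phi(t)$ and $g(s)\ge0$ give $g(t)\le\Phi(t)$, then divide the \emph{pointwise} a.e.\ relation obtained from coarea. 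In fact the honest route is to prove \eqref{estimate in L^2} on every ball $B_\rho$ with $r<\rho<R$, giving $\int_{E_t\cap B_\rho}|Du|^2\le\Phi_\rho(t)$, integrate in $\rho$, and then differentiate in $t$ using that $t\mapsto\int_{E_t\cap B_\rho}|Du|^2$ has a.e.-derivative equal to $\int_{\{u=t\}\cap B_\rho}|Du|\,d\mathcal H^{n-1}$; averaging over $\rho$ circumvents the monotonicity subtlety. The remaining work is routine algebra collecting terms, using $t f(t)\lesssim F(t)$-type estimates from monotonicity of $f$, and absorbing cross terms by Young's inequality and the monotonicity $|E_t\cap B_R|^{\alpha}\le |E_t\cap B_R|^{\beta}\max\{1,|B_R|\}^{\alpha-\beta}$ for $\alpha\ge\beta$.
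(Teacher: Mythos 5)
Your derivation of the first ($L^2$) estimate is essentially the paper's: test with $(t-u)_+\zeta^2$, bound $\int f(u)(t-u)_+$ by $F(t)|E_t\cap B_R|$ via Fubini/layer-cake, and handle the lower-order terms by H\"older and Young so that the $\|Du\|_{L^2}$ factors are absorbed; the exponents you indicate are the right ones. That half is fine.

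The second estimate is where your plan breaks down. You propose to write $\int_{\{u=t\}\cap B_r}|Du|\,d\mathcal{H}^{n-1}=\frac{d}{dt}\int_{E_t\cap B_r}|Du|^2$ (correct, by the coarea formula) and then to differentiate the inequality $g(t):=\int_{E_t\cap B_r}|Du|^2\le\Phi(t)$ in $t$. This step is not valid: for two nondecreasing functions, $g\le\Phi$ gives no pointwise information about $g'$ versus $\Phi'$ ($g$ can concentrate all of its growth on a small set of $t$'s and there have $g'\gg\Phi'$). Your proposed fix --- proving the $L^2$ bound on every intermediate ball $B_\rho$ and averaging in $\rho$ --- does not touch this problem, since the illegal operation is the differentiation in $t$, not anything in $\rho$. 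Moreover, even formally, $\Phi'(t)$ contains the term $F(t)\eta'(t)$ with $\eta(t)=|E_t\cap B_R|$, and $\eta'(t)=\int_{\{u=t\}}|Du|^{-1}\,d\mathcal{H}^{n-1}$ is precisely the quantity that the subsequent lemma needs to bound from below, so it cannot be absorbed into the stated right-hand side. A secondary error: you invoke $f(t)\le\frac{2}{t}F(t)$; for nondecreasing $f$ the correct inequality is the reverse one, $F(t)\le tf(t)$, so this route to the $\frac{F(t)}{t}|E_t\cap B_R|$ term also fails.

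The paper avoids differentiating any inequality: it integrates the equation (in divergence form, with smoothed coefficients) over $E_t\cap B_\rho$ and applies the divergence theorem, so that the boundary term on $\{u=t\}\cap B_\rho$, where $\nu=Du/|Du|$, directly yields $\lambda\int_{\{u=t\}\cap B_\rho}|Du|$. The lateral boundary term $\int_{E_t\cap\partial B_\rho}|Du|$ is then removed by integrating in $\rho$ over $(r,(r+R)/2)$ --- this is the actual purpose of the $\rho$-average --- and controlled via the first estimate; the $b_1$ boundary contribution is handled by the trace theorem. You would need this, or some equally direct identity for $\int_{\{u=t\}}|Du|$, to complete the proof.
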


\begin{proof}
Let $\zeta \colon \mathbb{R}^n \to [0,1]$ be a smooth cutoff that satisfies
$$
    \zeta = 1 \quad \mbox{in} \quad B_r, \quad \zeta = 0 \quad \mbox{in} \quad B_R \quad \mbox{and} \quad \|D\zeta\|_{L^\infty(B_R)} \leq 2(R-r)^{-1},  
$$
and let us consider $\varphi = (t-u)_+ \zeta^2$ as a test function. Computing
$$
    D\varphi = -\zeta^2\, \mathcal{X}_{E_t}Du + 2(t-u)_+ \zeta\, D\zeta,
$$
we obtain,
\begin{eqnarray*}
        -\int_{E_t\cap B_R} \varphi f(u) & \leq & -\int_{E_t\cap B_R} \zeta^2 A(x)Du \cdot Du + 2\int_{E_t\cap B_R} \zeta\, (t-u)_+ A(x)Du \cdot D\zeta\\
        & & + \int_{E_t \cap B_R}u b_1 \cdot D\varphi - \int_{E_t\cap B_R} \varphi b_2 \cdot Du - \int_{E_t \cap B_R}c\,u \varphi.
\end{eqnarray*}
Since $\mbox{Spec}(A(x)) \subset [\lambda,\Lambda]$, we have $A(x)Du \cdot Du \geq \lambda |Du|^2$ and $|A(x)Du|\leq \Lambda |Du|$. Substituting this in the previous inequality, we have
\begin{eqnarray*}
        \lambda \int_{E_t\cap B_R} \zeta^2\, |Du|^2  & \leq & \int_{E_t\cap B_R} \varphi f(u) + 2\Lambda\int_{E_t\cap B_R} \zeta\, (t-u)_+ |Du|\, |D\zeta|\\
        & &  + \int_{E_t \cap B_R}u |b_1|\, |D\varphi| + \int_{E_t\cap B_R} \varphi |b_2|\,|Du| + \int_{E_t \cap B_R}|c|u \varphi. 
\end{eqnarray*}
By Fubini's theorem, we have 
\begin{eqnarray*}
    \int_{B_R} f(u)(t-u)_+ & = & \int_{B_R}f(u)\int_{0}^{t}\mathcal{X}_{\{u < s \}}ds\, dx\\
    & = & \int_{0}^{t} \int_{B_R} f(u)\mathcal{X}_{\{u < s \}}dx\, ds\\
    & \leq & \int_{0}^{t}f(s) \int_{B_R} \mathcal{X}_{\{u < t \}}dx\, ds = F(t)|E_t \cap B_R|,
\end{eqnarray*}
where 
$$
    F(t) \coloneqq \int_{0}^tf(s).
$$
We recall Young's inequality for products: given  nonnegative real numbers $a$ and $b$, 
\begin{equation}\label{young inequality}
    ab \leq \frac{a^2}{2\epsilon} + \frac{\epsilon b^2}{2}
\end{equation}
for any $\epsilon>0$. Therefore, the following inequality holds
\begin{eqnarray*}
    2\Lambda\int_{E_t \cap B_R} \zeta (t-u)_+|Du||D\zeta| &\leq& \Lambda\int_{E_t \cap B_R} \epsilon |Du|^2\zeta^2 + \frac{\Lambda}{\epsilon}\int_{E_t \cap B_R}|D\zeta|^2(t-u)_+^2\\
    \int_{E_t \cap B_R}u |b_1|\, |D\varphi| & \leq & \frac{4t^2}{R-r}\|b_1\|_{L^1} + \frac{\epsilon}{2}
    \int_{E_t \cap B_R}|Du|^2 \zeta^4 + \frac{t^2}{2\epsilon}\|b_1\|_{L^2}^2\\
    \int_{E_t \cap B_R}(t-u)_+\zeta^2|b_2|\,|Du|& \leq & \frac{\epsilon}{2} \int_{E_t \cap B_R}|Du|^2\zeta^4 + \frac{t^2}{2\epsilon}\|b_2\|_{L^2}^2\\
    \int_{E_t \cap B_R}|c|u \varphi & \leq & t^2\|c\|_{L^{q/2}}|E_t \cap B_R|^{1 - \frac{2}{q}},
\end{eqnarray*}
where we intentionally omitted the domain of integration $(E_t \cap B_R)$ in the norms  to ease notation. Picking $\epsilon = \lambda(2(\Lambda+1))^{-1}$, so that $\left(\lambda-\epsilon(\Lambda+1)\right) = \lambda/2$, we obtain
\begin{eqnarray*}
    \frac{\lambda}{2} \int_{E_t\cap B_R} \zeta^2\, |Du|^2  & \leq & F(t)|E_t \cap B_R| + \frac{\Lambda t^2}{\epsilon(R-r)^2}|E_t \cap B_R| + \frac{C_1t^2}{R-r}|E_t \cap B_R|^{1 - \frac{1}{q}}\\
    & + & \frac{t^2}{4\epsilon}\left(C_1|E_t \cap B_R|^{1 - \frac{2}{q}} + C_2 |E_t \cap B_R|^{1 - \frac{2}{q}} + C_3|E_t \cap B_R|^{1 - \frac{2}{q}}\right),
\end{eqnarray*}
for $C_1 = C_1(\|b_1\|_{q})$, $C_2 = C_2(\|b_2\|_{q})$, $C_3 = C_3(\|c\|_{q/2})$. Then we are done once we recall that $\zeta = 1$ in $B_r$, which gives
\begin{eqnarray*}
    C^{-1}\|Du\|_{L^2(E_t \cap B_r)} & \leq & \left(\sqrt{F(t)} + \frac{t}{R-r} \right)|E_t \cap B_R|^{\frac{1}{2}} + \frac{t}{\sqrt{R-r}}|E_t \cap B_R|^{\frac{1}{2}-\frac{1}{2q}}\\
    & + &  t|E_t \cap B_R|^{\frac{1}{2}-\frac{1}{q}},
\end{eqnarray*}
for a constant $C = C(n,\lambda,\Lambda,\|b_1\|_{q},\|b_2\|_{q},\|c\|_{q/2})$. 
\medskip

To prove the second inequality, we consider the values $t>0$ such that $|Du|>0$ on $\{u = t\}$ (we recall that by the Morse-Sard Lemma this happens for almost every $t>0$, since $u$ is smooth). Given a parameter $\rho >0$, we integrate the equation in the set $E_t \cap B_\rho$ (recall we are assuming that the coefficients are smooth) to obtain
\begin{equation}\label{L1 grad integral}
    \int_{E_t \cap B_\rho} \mbox{div}\left(A(x)Du + b_1\, u \right) + b_2 \cdot Du + c\, u \leq \int_{E_t \cap B_\rho} f(u).
\end{equation}
By the divergence theorem, we have
\begin{eqnarray*}
    \mathcal{I} \coloneqq \int_{E_t \cap B_\rho} \mbox{div}\left(A(x)Du + b_1\,u \right) & = & \int_{\partial^* \left(E_t \cap B_\rho\right)} (A(x)Du + b_1\, u)\cdot \nu d\mathcal{H}^{n-1},
\end{eqnarray*}
where $\nu$ is the normal vector defined on the manifold $\partial^* (E_t \cap B_\rho)$. Inputting into \eqref{L1 grad integral} the fact that $\nu = Du/|Du|$ on the portion $\{u=t\} \cap B_\rho$, we have
\begin{eqnarray*}
    \int_{\{u=t\}  \cap B_\rho} A(x)Du \cdot \frac{Du}{|Du|} &\leq& \int_{E_t \cap B_\rho} f(u) + |b_2||Du| + |c|u - \int_{E_t \cap \partial B_\rho} A(x) Du \cdot \nu\\
    & + & \int_{\partial^* \left(E_t \cap B_\rho\right)} |b_1||u|.
\end{eqnarray*}
Again using that $\text{Spec}(A(x)) \subset [\lambda,\Lambda]$, we have
\begin{eqnarray*}
    \lambda \int_{\{u=t\}  \cap B_\rho} |Du|  &\leq& \int_{E_t \cap B_\rho} f(u) + |b_2||Du| + |c|u + \Lambda \int_{E_t \cap \partial B_\rho} |Du|\\
    & + & \int_{\partial^* \left(E_t \cap B_\rho\right)} |b_1||u|.
\end{eqnarray*}
We integrate the inequality with respect to $\rho$ from $r$ to $\hat{\rho} = \frac{r+R}{2}$ in order to obtain
\begin{eqnarray}
    \lambda\left(\frac{R-r}{2} \right) \int_{\{u=t \}  \cap B_r} |Du|  &\leq & \left(\frac{R-r}{2} \right)\int_{E_t \cap B_{\hat{\rho}}} f(u) + |b_2||Du| + |c|u \label{first inequality for grad}\\
    & + & \Lambda \int_{E_t \cap (B_{\hat{\rho}}\backslash B_r) } |Du| + \int_{r}^{\hat{\rho}}\int_{\partial^* \left(E_t \cap B_{\rho}\right)} |b_1||u|. \nonumber
\end{eqnarray}
Let us estimate the terms involving $b_2$ and $b_1$. An application of the H\"older inequality and \eqref{estimate in L^2} gives
\begin{eqnarray*}
    \int_{E_t \cap B_{\hat{\rho}}}|b_2||Du|
    & \leq & \|b_2\|_{L^2}\, \|Du\|_{L^{2}}\\
    & \leq & C \|b_2\|_{L^{q}}|E_t \cap B_R|^{\frac{1}{2} - \frac{1}{q}}  \,\left(\sqrt{F(t)} +\frac{t}{R-r} \right)|E_t \cap B_R|^{\frac{1}{2}}\\
    & + &  C \|b_2\|_{L^{q}}|E_t \cap B_R|^{\frac{1}{2} - \frac{1}{q}}\frac{t}{\sqrt{R-r}}|E_t \cap B_R|^{\frac{1}{2} - \frac{1}{2q}}\\
    & + &  C \|b_2\|_{L^{q}}|E_t \cap B_R|^{\frac{1}{2} - \frac{1}{q}}t|E_t \cap B_R|^{\frac{1}{2} - \frac{1}{q}},
\end{eqnarray*}
and as a consequence,
\begin{eqnarray*}
    C_1^{-1}\int_{E_t \cap B_{\hat{\rho}}}|b_2||Du| & \leq &   \left(\sqrt{F(t)} +\frac{t}{R-r} \right)|E_t \cap B_R|^{1 - \frac{1}{q}} + t|E_t \cap B_R|^{1 - \frac{2}{q}}\\
    & + &  \frac{t}{\sqrt{R-r}}|E_t \cap B_R|^{1 - \frac{3}{2q}}.
\end{eqnarray*}
Another application of the H\"older inequality and the Trace Theorem (see \cite{EV}*{Section 5.5, Theorem 1}), gives
\begin{eqnarray*}
    \int_{\partial^* \left(E_t \cap B_{\rho}\right)} |b_1||u| & \leq & \|b_1\|_{q} \left\{\int_{\partial^* \left(E_t \cap B_{\rho}\right)} |u|^{q'}\right\}^{\frac{1}{q'}}\\
    & \leq & C\, \|b_1\|_{q} \left(\|u\|_{L^{q'}(E_t \cap B_{\rho})} + \|Du\|_{L^{q'}(E_t \cap B_{\rho})}\right),
\end{eqnarray*}
where $q'$ is the conjugate exponent of $q$. We now make use of the nonnegativity of $u$, H\"older's inequality and \eqref{estimate in L^2} to obtain
\begin{eqnarray*}
    \int_{\partial^* \left(E_t \cap B_{\rho}\right)} |b_1||u| & \leq & \overline{C} \left(t|E_t \cap B_\rho|^{1 - \frac{1}{q}} + \|Du\|_{L^2(E_t \cap B_\rho)}|E_t \cap B_\rho|^{1 - \frac{1}{q} - \frac{1}{2}} \right)\\
    & \leq & \overline{C}t|E_t \cap B_\rho|^{1 - \frac{1}{q}} + \overline{C}C\left(\sqrt{F(t)} + \frac{t}{R-r} \right)|E_t \cap B_\rho|^{1 - \frac{1}{q}}\\
    & & +  \overline{C}C\left(\frac{t}{\sqrt{R-r}}|E_t \cap B_\rho|^{1 - \frac{3}{2q}} + t|E_t \cap B_\rho|^{1 - \frac{2}{q}} \right),
\end{eqnarray*}
for some constant $\overline{C}$ depending on $n$, $q$ and $\|b_1\|_{q}$. Putting all together into inequality \eqref{first inequality for grad}, we obtain
\begin{eqnarray*}
    C_2^{-1}\int_{\{u=t \}  \cap B_r} |Du|  &\leq& \int_{E_t \cap B_R}f(u)+  \left(\sqrt{F(t)} +\frac{t}{R-r} \right)|E_t \cap B_R|^{1 - \frac{1}{q}}\\
    & + & \frac{t}{\sqrt{R-r}}|E_t \cap B_R|^{1 - \frac{1}{2q}-\frac{1}{q}} + t|E_t \cap B_R|^{1-\frac{2}{q}}\\
    & + & t|E_t \cap B_R|^{1-\frac{2}{q}} + \left(\frac{\sqrt{F(t)}}{R-r} + \frac{t}{(R-r)^2} \right)|E_t \cap B_R|\\
    & + & \frac{t}{(R-r)^{3/2}}|E_t \cap B_R|^{1-\frac{1}{2q}}+\frac{t}{R-r}|E_t \cap B_R|^{1-\frac{1}{q}}\\
    & + & t|E_t \cap B_\rho|^{1 - \frac{1}{q}} + |E_t \cap B_\rho|^{1 - \frac{1}{q}}\left(\sqrt{F(t)} + \frac{t}{R-r} \right)\\
    & + & \frac{t}{\sqrt{R-r}}|E_t \cap B_\rho|^{1 - \frac{3}{2q}} + t|E_t \cap B_\rho|^{1 - \frac{2}{q}},
\end{eqnarray*}
for a constant $C_2 = C_2(n,\lambda,\Lambda,\|b_1\|_{q},\|b_2\|_{q},\|c\|_{q/2})$. Simplifying this further gives
\begin{eqnarray*}
    C_3^{-1}\int_{\{u=t \}  \cap B_r} |Du|  &\leq& \int_{E_t \cap B_R}f(u) + t|E_t \cap B_R|^{1-\frac{2}{q}}\\
    & + &\left(\sqrt{F(t)} +\frac{t}{R-r} \right)|E_t \cap B_R|^{1 - \frac{1}{q}} + \frac{t}{\sqrt{R-r}}|E_t \cap B_R|^{1 - \frac{3}{2q}}\\
    & + &\left(\frac{\sqrt{F(t)}}{R-r} + \frac{t}{(R-r)^2} \right)|E_t \cap B_R| + \frac{t}{(R-r)^{3/2}}|E_t \cap B_R|^{1-\frac{1}{2q}},
\end{eqnarray*}
for $C_3 = 2C_2$. The estimate is complete once one notices that
$$
    \int_{E_t \cap B_R} f(u) \leq \frac{F(t)}{t}|E_t \cap B_R|.
$$
\end{proof}

Now we use the second part of the Cacciopoli-type inequality to get a partial ODE involving the sub- and superlevel sets.

\begin{lemma}\label{partial ODE}
Assume $u \in C^{\infty}(B_2)$ is a nonnegative supersolution of
$$
    \mathcal{L}[u] \leq f(u).
$$
Then, for almost every $t>0$, we have
$$
    \int_{\{u=t\}\cap B_{2r}(x_0)} \frac{1}{|Du|}d\mathcal{H}^{n-1} \geq \frac{c}{t}\, \min \big\{|E_t \cap B_r(x_0)|, |B_r(x_0) \backslash E_t| \big\},
$$
whenever $B_{2r}(x_0) \subset B_2$ is such that
\begin{equation}\label{relation between radius and t for partial ode}
    r \leq \frac{t}{\sqrt{F(t)}}.
\end{equation}
\end{lemma}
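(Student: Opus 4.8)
The plan is to derive the stated differential inequality from three facts already at hand: the Cauchy--Schwarz bound of Lemma \ref{ineq for the perimeter} (which turns $\int 1/|Du|$ into $P^2/\int|Du|$), the relative isoperimetric inequality of Lemma \ref{relativ isoperimetric inequality} (which bounds the perimeter below by the measures of the sub/superlevel sets), and the second, level-set estimate of Proposition \ref{cacciopoli inequality} (which bounds $\int_{\{u=t\}}|Du|$ above). I restrict attention to the full-measure set of levels $t$ for which $\{u=t\}$ is a regular hypersurface, so that $|Du|>0$ on $\{u=t\}$ and all integrals over $\{u=t\}$ are meaningful; and I discard the trivial case $m:=\min\{|E_t\cap B_r(x_0)|,\,|B_r(x_0)\setminus E_t|\}=0$. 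First I would reduce everything to an upper bound on $\int_{\{u=t\}\cap B_r(x_0)}|Du|\,d\mathcal{H}^{n-1}$: monotonicity of the integrand, Lemma \ref{ineq for the perimeter} applied on $B_r(x_0)$, and then Lemma \ref{relativ isoperimetric inequality} give
\[
\int_{\{u=t\}\cap B_{2r}(x_0)}\frac{d\mathcal{H}^{n-1}}{|Du|}
\;\ge\;\int_{\{u=t\}\cap B_r(x_0)}\frac{d\mathcal{H}^{n-1}}{|Du|}
\;\ge\;\frac{P\big(E_t,B_r(x_0)\big)^2}{\int_{\{u=t\}\cap B_r(x_0)}|Du|\,d\mathcal{H}^{n-1}}
\;\ge\;\frac{c\,m^{\frac{2(n-1)}{n}}}{\int_{\{u=t\}\cap B_r(x_0)}|Du|\,d\mathcal{H}^{n-1}},
\]
so, since $\tfrac{2(n-1)}{n}-\tfrac{n-2}{n}=1$, it would suffice to prove $\int_{\{u=t\}\cap B_r(x_0)}|Du|\,d\mathcal{H}^{n-1}\le C\,t\,m^{\frac{n-2}{n}}$.

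To reach such a bound I would apply the second estimate of Proposition \ref{cacciopoli inequality} with $R=2r$, which controls $\int_{\{u=t\}\cap B_r(x_0)}|Du|$ by a finite sum of terms $(\text{prefactor in }F(t),t,r)\times|E_t\cap B_{2r}(x_0)|^{\theta}$. The hypothesis $r\le t/\sqrt{F(t)}$ is exactly what lets this be simplified: it says $\sqrt{F(t)}\le t/r$, hence $F(t)/t\le t/r^2$ and $\sqrt{F(t)}/r\le t/r^2$, so every prefactor containing $F(t)$ is absorbed into a power of $t/r$. Then, using $|E_t\cap B_{2r}(x_0)|\le|B_{2r}|=C_n r^n$, I would trade each negative power of $r$ for a negative power of $|E_t\cap B_{2r}(x_0)|$, collapsing the right-hand side to a sum of terms $t\,|E_t\cap B_{2r}(x_0)|^{\theta}$ with $\theta\in\{\,1-\tfrac2n,\ 1-\tfrac2q,\ 1-\tfrac1n-\tfrac1q,\ 1-\tfrac1{2n}-\tfrac3{2q},\ 1-\tfrac3{2n}-\tfrac1{2q}\,\}$. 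Here $q>n$ is sharp: each inequality $1-\tfrac2n<\theta$ above is equivalent to $q>n$, so $1-\tfrac2n$ is the smallest exponent, and since $|E_t\cap B_{2r}(x_0)|$ is bounded the corresponding term dominates the sum, giving
\[
\int_{\{u=t\}\cap B_r(x_0)}|Du|\,d\mathcal{H}^{n-1}\;\le\;C\,t\,|E_t\cap B_{2r}(x_0)|^{\frac{n-2}{n}}
\]
(the right-hand side read as $Ct$ when $n=2$).

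Combined with the first display, this already yields the conclusion whenever $|E_t\cap B_{2r}(x_0)|$ is comparable to $m$; in particular it settles the case $\min\{|E_t\cap B_{2r}|,|B_{2r}\setminus E_t|\}=|E_t\cap B_{2r}|$, since then $m\le|E_t\cap B_r|\le|E_t\cap B_{2r}|$ and the quotient above is $\ge c'm/t$, and (when $n=2$) it settles the lemma outright. The remaining and \emph{genuinely delicate} situation, for $n\ge3$, is when $E_t$ fills most of $B_{2r}(x_0)$, so that $m=|B_r(x_0)\setminus E_t|$ can be far smaller than $|E_t\cap B_{2r}(x_0)|$ and the crude bound $|E_t\cap B_{2r}|\le|B_{2r}|$ is too lossy. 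I would handle it by a scale analysis: since $\int_{\{u=t\}\cap B_{2r}}1/|Du|\ge\int_{\{u=t\}\cap B_{2\rho}}1/|Du|$ for every $\rho\le r$, and since $\rho\le r\le t/\sqrt{F(t)}$ keeps the $F(t)$-absorption step valid on $B_\rho(x_0)$, one may re-run the whole argument on a concentric sub-ball of radius $\rho\sim m^{1/n}$, which brings $|E_t\cap B_{2\rho}(x_0)|\le C_n(2\rho)^n$ down to the order of $m$ so that the exponents close; if even this sub-ball is too depleted, one falls back on the isoperimetric inequality applied to the set $B_\rho(x_0)\setminus E_t$ together with an averaging over radii to recover a perimeter lower bound of the correct order. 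I expect this last matching of the measure-dependent factors against $m$, uniformly across regimes, to be the main obstacle: it is precisely where the extra lower-order terms of Proposition \ref{cacciopoli inequality} — which, unlike in Julin's lower-order-free setting, carry measure exponents shifted by $\tfrac1q,\tfrac1{2q},\dots$ — must be shown harmless, and where the sharpness of $q>n$ and the freedom in the choice of radius are used together.
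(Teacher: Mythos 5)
Your reduction --- Lemma \ref{ineq for the perimeter} plus the relative isoperimetric inequality to convert the claim into an upper bound on $\int_{\{u=t\}\cap B_r(x_0)}|Du|$, then the second estimate of Proposition \ref{cacciopoli inequality} with $R=2r$, the absorption of every $F(t)$-prefactor via $r\le t/\sqrt{F(t)}$, and the observation that $q>n$ makes $1-\frac2n$ the smallest (hence dominant) measure exponent --- is exactly the paper's argument, and your remark that this closes the lemma outright when $n=2$ is correct. But for $n\ge3$ there is a genuine gap, and it is wider than you indicate. First, a local error: from $\int_{\{u=t\}\cap B_r}|Du|\le Ct\,|E_t\cap B_{2r}|^{\frac{n-2}{n}}$ and $P(E_t,B_r)^2\ge c\,m^{\frac{2(n-1)}{n}}=c\,m\cdot m^{\frac{n-2}{n}}$ you only obtain the conclusion when $|E_t\cap B_{2r}|\le Cm$; the condition $\min\{|E_t\cap B_{2r}|,|B_{2r}\setminus E_t|\}=|E_t\cap B_{2r}|$ does not give this (let $E_t\cap B_r$ have tiny measure $\epsilon$ while $E_t$ occupies a fixed fraction of the annulus $B_{2r}\setminus B_r$: then $m=\epsilon$ but $|E_t\cap B_{2r}|\sim r^n$, and your quotient is of order $\epsilon\,(\epsilon/r^n)^{\frac{n-2}{n}}/t\ll\epsilon/t$). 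So the delicate regime is not only ``$E_t$ fills most of $B_{2r}$''; it is every configuration with $m\ll r^n$, on either side of the minimum.

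Second, the fix you propose for that regime --- passing to a concentric sub-ball $B_\rho(x_0)$ with $\rho\sim m^{1/n}$ --- cannot work: the small set ($E_t\cap B_r$ or $B_r\setminus E_t$) need not be anywhere near the center $x_0$, so $B_\rho(x_0)$ may miss it entirely and the local minimum there is zero, yielding no information. What is actually needed, and what the paper does (following Julin), is a covering argument: one covers the small set, say $E_t\cap B_r$, by balls $\overline{B}_{R_i}(x_i)$ centered at its \emph{own} points $x_i\in E_t$, with $R_i=\inf\{\rho>0:\ |B_\rho(x_i)\cap E_t|\le(1-2^{-n-1})|B_\rho(x_i)|\}$, so that at the stopping radius both $|E_t\cap B_{R_i}(x_i)|$ and $|B_{R_i}(x_i)\setminus E_t|$ are comparable to $|B_{R_i}|$ and your ``good density'' computation applies in each ball; one then sums over a Besicovitch subfamily of bounded overlap, using that the left-hand integral is taken over the doubled ball $B_{2r}(x_0)\supset B_{2R_i}(x_i)$. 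This covering step is the essential missing idea in your proposal; ``averaging over radii'' around the fixed center $x_0$ does not substitute for it.
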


\begin{proof}
By considering $v(x) = u(x_0 + x)$, we may assume $x_0 = 0$. We split the proof into two cases depending on where the minimum is attained. Assume first that
\begin{equation}\label{min is the first one}
    \min \left\{|E_t \cap B_r|, |B_r \backslash E_t| \right\} = |E_t \cap B_r|. 
\end{equation}
Observe that the integrand on the LHS of the statement of the lemma is not defined along the set of critical points of $u$. However, Morse-Sard Lemma ensures that for almost every $t>0$ we have $|Du|>0$ on the level set $\{u=t\}\cap B_2$.

The proof now proceeds depending on whether the density of $E_t \cap B_r$ in $B_r$ is universally controlled away from zero and from one. Similar to the proof of \cite{VJ1}*{Lemma 3.3}, we split the proof further into two cases. Assume
\begin{equation}\label{first case}
    |E_t \cap B_r| \geq 2^{-(n+2)}|B_r|.
\end{equation}
By Proposition \ref{cacciopoli inequality} with $R=2r$, we have
\begin{eqnarray*}
    C^{-1}\,\|Du\|_{L^1(\{u=t \}  \cap B_r)}  &\leq&  \left(\frac{F(t)}{t}+\frac{\sqrt{F(t)}}{r} +\frac{t}{r^2} \right)|E_t \cap B_{2r}|+ t|E_t \cap B_{2r}|^{1-\frac{2}{q}}\\
    & + &  \left(\sqrt{F(t)} +\frac{t}{r} \right)|E_t \cap B_{2r}|^{1 - \frac{1}{q}} +  \frac{t}{\sqrt{r}}|E_t \cap B_{2r}|^{1 - \frac{3}{2q}}\\
    & + & \frac{t}{r^{3/2}}|E_t \cap B_{2r}|^{1 - \frac{1}{2q}}.
\end{eqnarray*}
From \eqref{relation between radius and t for partial ode}, we obtain
$$
    \frac{F(t)}{t} \leq \frac{t}{r^2},
$$
and so
\begin{eqnarray*}
    C_1^{-1}\,\|Du\|_{L^1(\{u=t \}  \cap B_r)}  & \leq &  tr^{n-2} + tr^{n\left(1-\frac{2}{q}\right)}+tr^{n\left(1-\frac{1}{q}\right) - 1}+ tr^{n\left(1-\frac{3}{2q}\right) - \frac{1}{2}}\\
    &+& tr^{n\left(1-\frac{1}{2q}\right) - \frac{3}{2}}.
\end{eqnarray*}
Now, since  $q \geq n$, we have
$$
    C_2^{-1}\,\|Du\|_{L^1(\{u=t \}  \cap B_r)} \leq tr^{n-2}.
$$
By Lemma \ref{ineq for the perimeter}, we get
$$
   (P(E_t,B_r))^2\, \left(\int_{\{u=t\}\cap B_r}\frac{1}{|Du|} \right)^{-1} \leq \|Du\|_{L^1(\{u=t \}\cap B_r)}. 
$$
By Lemma \ref{relativ isoperimetric inequality} and \eqref{min is the first one}, we obtain
$$
    P(E_t,B_r) \geq c \min\left\{|E_t \cap B_r|^{\frac{n-1}{n}},|B_r \backslash E_t|^{\frac{n-1}{n}} \right\} = c |E_t \cap B_r|^{\frac{n-1}{n}}.
$$
Thus,
$$
   c^2 |E_t \cap B_r|^{\frac{2(n-1)}{n}} \left(\int_{\{u=t\}\cap B_r}\frac{1}{|Du|} \right)^{-1} \leq \|Du\|_{L^1(\{u=t \}\cap B_r)} \leq C\, tr^{n-2}.
$$
We now use \eqref{first case} to estimate $|E_t \cap B_r|$ from below and get
$$
    \int_{\{u=t\}\cap B_r}\frac{1}{|Du|}  \geq C_1 \frac{1}{t}r^n \geq C_2 \frac{1}{t}|E_t \cap B_r|.
$$
If \eqref{first case} is not true, then we have $|E_t \cap B_r| < 2^{-(n+2)}|B_r|$. In this case, we cover $E_t \cap B_r$ by a Besicovitch-type family of balls $\left\{\overline{B}_{R_i}(x_i)\right\}_{x_i \in E_t}$ for 
$$
    R_i \coloneqq \inf\left\{ \rho > 0 \colon |B_\rho(x_i)\cap E_t|\leq \left(1-2^{-n-1}\right)|B_\rho(x_i)|\right\}.
$$
We can then apply the first part of the proof (when we assumed \eqref{first case}), with $B_{R_i}(x_i)$ instead of $B_r$, and the result follows by a covering argument as in \cite{VJ1}*{Proof of Lemma 3.3}.

Now, if \eqref{min is the first one} is not true, that is
\begin{equation*}
    \min \left\{|E_t \cap B_r(x_0)|, |B_r(x_0) \backslash E_t| \right\} = |B_r(x_0) \backslash E_t|,
\end{equation*}
then we repeat the same proof depending on whether $|B_r \backslash E_t| \geq 2^{-(n+2)}|B_r|$ holds. The only difference in comparison to the first part is that $P(E_t, B_r) \geq c|B_r \backslash E_t|^{\frac{n-1}{n}}$.
\end{proof}

We now derive the ODE for the level sets. Since the argument closely follows that of \cite{VJ1}*{Proposition 3.5}, we omit the details. The differential inequality arising from the equation is used exclusively to establish Lemma \ref{partial ODE}, whose conclusion mirrors that of \cite{VJ1}. From this point onward, the proof proceeds using tools from measure theory.

\begin{proposition}\label{ODE for level sets}
Assume $u \in C^\infty(B_2)$ is a nonnegative supersolution of
$$
    \mathcal{L}[u] \leq f(u).
$$
Denote $\mu(t) = |A_t \cap B_2|$ and $\eta(t) = |E_t \cap B_2|$. Then, for almost every $t>0$ such that $\mu(t) \leq |B_2|/2$, we have 
$$
    -\mu'(t) \geq c\, \min\left\{\frac{1}{\sqrt{F(t)}}\mu(t)^{\frac{n-1}{n}}, \frac{1}{t}\mu(t) \right\},
$$
and for almost every $t>0$ with $\eta(t) \leq |B_2|/2$, 
$$
    \eta'(t) \geq c\, \min\left\{\frac{1}{\sqrt{F(t)}}\eta(t)^{\frac{n-1}{n}}, \frac{1}{t}\eta(t) \right\}.
$$
\end{proposition}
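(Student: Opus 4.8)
The plan is to deduce the proposition from the measure-theoretic covering scheme of \cite{VJ1}*{Proposition 3.5}, with Lemma \ref{partial ODE} playing the role of the only ingredient that uses the equation. First I would observe that $A_t\cap B_2$ and $E_t\cap B_2$ partition $B_2$, so $\mu(t)+\eta(t)=|B_2|$ and hence $\eta'(t)=-\mu'(t)$, and exactly one of the two sets is the minority set in $B_2$. Denoting it by $S(t)$ and setting $m(t):=\min\{\mu(t),\eta(t)\}=|S(t)\cap B_2|$, the condition $\mu(t)\le|B_2|/2$ (resp. $\eta(t)\le|B_2|/2$) simply says $S(t)=A_t$ (resp. $S(t)=E_t$) and $m(t)=\mu(t)$ (resp. $m(t)=\eta(t)$); since at least one of these holds for every $t$, the two displayed inequalities reduce to the single assertion that, for a.e. $t>0$,
$$
  \eta'(t)\ \ge\ c\,\min\Bigl\{\tfrac1{\sqrt{F(t)}}\,m(t)^{\frac{n-1}{n}},\ \tfrac1t\,m(t)\Bigr\}.
$$
Because $u$ is smooth, the coarea formula identifies $\eta'(t)$ with $\int_{\{u=t\}\cap B_2}|Du|^{-1}\,d\mathcal H^{n-1}$ for a.e. $t$, and the Morse--Sard lemma ensures $|Du|>0$ on $\{u=t\}\cap B_2$ for a.e. $t$, so this expression is finite; it is this surface integral that I would bound from below.

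Fix such a $t$ and set $r_0:=t/\sqrt{F(t)}$. Following Julin, I would cover $S(t)$ up to a null set by a Besicovitch family of balls $\{B_{R_i}(x_i)\}_i$ of bounded overlap with $x_i\in S(t)$, each radius obeying $R_i\le\min\{r_0,\tfrac12\operatorname{dist}(x_i,\partial B_2)\}$ and chosen as large as this allows while keeping the density of $S(t)$ in $B_{R_i}(x_i)$ at the universal level $\approx 2^{-(n+1)}$, in particular in $(0,\tfrac12)$ --- the same stopping rule that already appears inside the proof of Lemma \ref{partial ODE}. For each $i$ there are two possibilities. If the density stopping is the binding constraint, then $S(t)$ has density in $(0,\tfrac12)$ inside $B_{R_i}(x_i)$, so the minimum in Lemma \ref{partial ODE} applied on that ball equals $|S(t)\cap B_{R_i}(x_i)|$, and together with $|S(t)\cap B_{R_i}(x_i)|\approx R_i^n$ this yields $\int_{\{u=t\}\cap B_{2R_i}(x_i)}|Du|^{-1}\ge c\,t^{-1}|S(t)\cap B_{R_i}(x_i)|$. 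If instead $R_i=r_0$ is the binding constraint, then I would re-run the argument in the proof of Lemma \ref{partial ODE} directly on $B_{r_0}(x_i)$ --- the choice $r_0=t/\sqrt{F(t)}$ being exactly what absorbs the $F(t)$-terms of \eqref{estimate in L^2} --- and combine Lemma \ref{ineq for the perimeter}, Lemma \ref{relativ isoperimetric inequality}, the density lower bound $|S(t)\cap B_{r_0}(x_i)|\gtrsim r_0^n$ and the identity $r_0/t=1/\sqrt{F(t)}$ to get $\int_{\{u=t\}\cap B_{2r_0}(x_i)}|Du|^{-1}\ge c\,F(t)^{-1/2}|S(t)\cap B_{r_0}(x_i)|^{(n-1)/n}$. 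The degenerate subcase in which $S(t)$ turns out to be the majority inside a selected ball is reduced to these two, as in \cite{VJ1}, by a further Besicovitch covering of the complementary set, which terminates because no set has density $1$ almost everywhere.

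Finally I would sum. Splitting the family into the balls of the first type, of total $S(t)$-mass $m_1$, and of the second type, of total mass $m_2$, one has $m_1+m_2\ge m(t)$. The bounded overlap of the (doubled) balls controls $\sum_i\int_{\{u=t\}\cap B_{2R_i}(x_i)}|Du|^{-1}$ by $C\,\eta'(t)$; summing the first-type estimates gives $\ge c\,t^{-1}m_1$, and summing the second-type estimates, using the subadditivity $\sum_i a_i^{(n-1)/n}\ge\bigl(\sum_i a_i\bigr)^{(n-1)/n}$ valid since $0<(n-1)/n<1$, gives $\ge c\,F(t)^{-1/2}m_2^{(n-1)/n}$. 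Hence $\eta'(t)\ge c\,\max\{t^{-1}m_1,\,F(t)^{-1/2}m_2^{(n-1)/n}\}$, and since whichever of $m_1,m_2$ is at least $m(t)/2$ already dominates the corresponding term, this is bounded below by $c'\min\{F(t)^{-1/2}m(t)^{(n-1)/n},\,t^{-1}m(t)\}$, which is the claim. The contribution of balls forced to be small near $\partial B_2$ is handled exactly as in \cite{VJ1}, e.g. by first proving the inequality with $B_2$ replaced by $B_{2-\delta}$ and then letting $\delta\downarrow0$.

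I expect the main obstacle to be precisely this covering bookkeeping: arranging the stopping radii so that simultaneously Lemma \ref{partial ODE}'s hypothesis $r\le t/\sqrt{F(t)}$ is met, the density of $S(t)$ is pinned in a range that keeps the relative isoperimetric inequality non-degenerate and identifies the minority set correctly, and the doubled balls remain inside $B_2$; and then extracting a single clean minimum from a family that may mix the two regimes. All of this is Julin's measure-theoretic argument from \cite{VJ1}*{Proposition 3.5}, which is why the details can be deferred to that reference.
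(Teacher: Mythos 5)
Your proposal is correct and is essentially the paper's intended proof: the paper explicitly omits the details here, noting that once Lemma \ref{partial ODE} is in hand the argument is the purely measure-theoretic covering scheme of \cite{VJ1}*{Proposition 3.5}, which is exactly what you reconstruct (coarea identification of $\eta'(t)$, Besicovitch covering of the minority set with the stopping radius $\min\{t/\sqrt{F(t)},\tfrac12\operatorname{dist}(x_i,\partial B_2)\}$, the two regimes according to which constraint binds, and summation via bounded overlap and subadditivity of $x\mapsto x^{(n-1)/n}$). No substantive differences.
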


We conclude this section with a consequence of an iteration of Proposition \ref{cacciopoli inequality}, which is particularly useful when the sublevel sets exhibit sufficiently fast decay. In this case, geometric information about supersolutions can be deduced. The proof follows the classical approach used in establishing the Harnack inequality. This is another key point where the fact that the integrability of the lower-order coefficients exceeds the dimension becomes crucial.

\begin{lemma}\label{sublevel measure decay}
Assume $u \in C^\infty(B_2)$ is a nonnegative supersolution of
$$
    \mathcal{L}[u] \leq f(u).
$$
There is $\delta_0 = \delta_0(n,\lambda,\Lambda,\|b_1\|_{q},\|b_2\|_{q},\|c\|_{q/2})$  such that if for some $t >0$ 
$$
    |E_t \cap B_2| \leq \delta_0 \left(\frac{t}{\sqrt{F(t)} + t} \right)^\frac{q n}{q-n},
$$
then
$$
    \inf_{B_1} u \geq \frac{t}{2}.
$$
\end{lemma}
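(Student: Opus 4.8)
The plan is to run a De Giorgi--type iteration based on the $L^2$ Caccioppoli inequality \eqref{estimate in L^2}, exactly in the spirit of the classical proof that small measure of a sublevel set forces a pointwise lower bound for a supersolution. Fix the level $t>0$ satisfying the smallness hypothesis. For $k\ge 0$ set $t_k = \tfrac{t}{2} + \tfrac{t}{2^{k+1}}$ (so $t_k\searrow \tfrac t2$), and radii $r_k = 1 + 2^{-k}$ decreasing from $2$ to $1$, and consider the normalized energies $A_k \coloneqq |E_{t_k}\cap B_{r_k}|$. The goal is to show $A_k\to 0$, which gives $|E_{t/2}\cap B_1|=0$, i.e. $u\ge t/2$ a.e. in $B_1$, hence everywhere by continuity; in fact one wants $\inf_{B_1}u\ge t/2$.

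The mechanism is standard. Writing $w = (t_k-u)_+$ on $E_{t_k}$, one has $w\ge t_k - t_{k+1} = t\,2^{-(k+2)}$ on the smaller set $E_{t_{k+1}}$, so by Chebyshev
\[
 |E_{t_{k+1}}\cap B_{r_{k+1}}| \;\le\; \Big(\tfrac{2^{k+2}}{t}\Big)^{2^*}\!\!\int_{E_{t_k}\cap B_{r_k}} w^{2^*},
\]
and Sobolev's inequality together with the Caccioppoli bound \eqref{estimate in L^2} (applied with $R=r_k$, $r=r_{k+1}$, so $R-r = 2^{-(k+1)}$) controls $\|Dw\|_{L^2(E_{t_k}\cap B_{r_k})}$ — and hence $\|w\|_{L^{2^*}}$ — by $C\,2^{k}\big(\sqrt{F(t_k)}+t_k+t_k A_k^{-1/(2q)}+t_kA_k^{-1/q}\big)A_k^{1/2}$. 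Since $F$ is nondecreasing, $\sqrt{F(t_k)}\le \sqrt{F(t)}$ and $t_k\le t$, so the factor in parentheses is at most $C\big(\sqrt{F(t)}+t\big)\big(1 + A_k^{-1/q}\big)$, and because the hypothesis forces $A_k\le A_0\le \delta_0(t/(\sqrt{F(t)}+t))^{qn/(q-n)}\le 1$ (shrinking $\delta_0$), the term $A_k^{-1/q}\ge 1$ dominates. Collecting the powers of $A_k$ one arrives at a recursion of the form
\[
 A_{k+1} \;\le\; C\,B^{k}\,\Big(\tfrac{\sqrt{F(t)}+t}{t}\Big)^{2^*}\, A_k^{\,2^*\,(\,\frac12-\frac1q\,)}
\]
for dimensional constants $C>0$, $B>1$; the exponent $\gamma\coloneqq 2^*(\tfrac12-\tfrac1q) = \tfrac{n}{n-2}\cdot\tfrac{q-2}{q}$ exceeds $1$ precisely because $q>n\ge$ (effectively, because $\tfrac12-\tfrac1q>\tfrac1{2^*}=\tfrac12-\tfrac1n$, i.e. $q>n$). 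Writing $\gamma = 1+b$ with $b=\gamma-1>0$, Lemma \ref{useful real analysis result} applies: $A_k\to 0$ provided
\[
 A_0 \;\le\; \Big[C\big(\tfrac{\sqrt{F(t)}+t}{t}\big)^{2^*}\Big]^{-1/b} B^{-1/b^2},
\]
and the right-hand side is exactly a constant $\delta_0(n,\lambda,\Lambda,q,\|b_1\|_q,\|b_2\|_q,\|c\|_{q/2})$ times $\big(t/(\sqrt{F(t)}+t)\big)^{2^*/b} = \big(t/(\sqrt{F(t)}+t)\big)^{qn/(q-n)}$, after checking $2^*/b = qn/(q-n)$. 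That matches the hypothesis, so we are done (for $n=2$ one replaces $2^*$ by any fixed large exponent, with the same bookkeeping; the case $\inf_{B_1}u\ge t/2$ rather than merely a.e. follows from continuity of $u$, or one runs the iteration ending at a slightly larger radius).

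The main obstacle is purely bookkeeping: one must verify that \emph{every} error term appearing in \eqref{estimate in L^2} — the ``good'' term $(\sqrt{F(t)}+t/(R-r))A_k^{1/2}$ as well as the two ``unbounded-coefficient'' terms $t(R-r)^{-1/2}A_k^{1/2-1/(2q)}$ and $t\,A_k^{1/2-1/q}$ — after being fed through the Sobolev/Chebyshev step produces a power of $A_k$ that is $\ge \gamma>1$, so that the worst (smallest) exponent still beats $1$; the condition $q>n$ is exactly what guarantees this, and it is here that its sharpness, advertised in the introduction, shows up. One also has to be a little careful that the geometric constant $B^k$ and the $t$-dependent prefactor combine so that the smallness threshold from Lemma \ref{useful real analysis result} is genuinely of the claimed form $\delta_0 (t/(\sqrt F(t)+t))^{qn/(q-n)}$ and that $\delta_0$ depends only on the listed quantities (in particular not on $t$ or $f$) — this is immediate once the exponents are matched, since $F$ enters only through the monotone bound $F(t_k)\le F(t)$.
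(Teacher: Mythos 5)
Your argument is correct and follows essentially the same route as the paper: a De Giorgi iteration on the sublevel sets driven by the $L^2$ Caccioppoli inequality \eqref{estimate in L^2}, with the smallness threshold and the exponent $qn/(q-n)$ extracted from Lemma \ref{useful real analysis result}. The only (cosmetic) difference is that you run the iteration through the $W^{1,2}\hookrightarrow L^{2^*}$ embedding and Chebyshev at power $2^*$, whereas the paper uses the $W^{1,1}\hookrightarrow L^{n/(n-1)}$ embedding on a truncation together with the monotonicity trick $x_{i+1}\le x_{i+1}^{(n-1)/n}x_i^{1/n}$; both yield the same final exponent, and your version requires the standard extra care with the cutoff/lower-order $L^2$ term in Sobolev's inequality and with the case $n=2$, which you correctly flag.
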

\begin{proof}
First, we pick $\delta_0 \leq \frac{1}{2}|B_2|$. Observe that
$$
    |E_t \cap B_2| \leq \delta_0 \left(\frac{t}{\sqrt{F(t)} + t} \right)^\frac{q n}{q-n} \leq \delta_0.
$$
Now, given $0 < h < k \leq t$, we define $v$ as 
\[   
v = 
     \begin{cases}
       k &\quad \text{if} \quad  u \geq k\\
       k-u &\quad\text{if} \quad  h < u < k \\
       k-h &\quad\text{if} \quad  u \leq h.\\
     \end{cases}
\]
By Gagliardo-Niremberg-Sobolev inequality for $v$ we get
\begin{eqnarray*}
    (k-h)|E_h \cap B_\rho|^{1 - \frac{1}{n}} & \leq & \|v\|_{L^{\frac{n}{n-1}}(E_k \cap B_\rho)}\\
    & \leq & c\|Dv\|_{L^1(E_k \cap B_\rho)}\leq c|E_k \cap B_\rho|^{\frac{1}{2}}\|Du\|_{L^2(E_k \cap B_\rho)} 
\end{eqnarray*}
for a dimensional constant $c$. By Proposition \ref{cacciopoli inequality}, for every $0 < \rho < R \leq 2$ we have
\begin{eqnarray*}
     C^{-1}\, \|Du\|_{L^2(E_k\cap B_\rho)}  &\leq&  \left(\sqrt{F(k)} +\frac{k}{R-\rho} \right)|E_k \cap B_R|^{\frac{1}{2}} + \frac{k}{\sqrt{R-r}}|E_k\cap B_R|^{\frac{1}{2}-\frac{1}{2q}}\\
     & + & k|E_k \cap B_R|^{\frac{1}{2}-\frac{1}{q}}. 
\end{eqnarray*}
Therefore,
\begin{eqnarray*}
    (4cC)^{-1}(k-h)|E_h \cap B_\rho|^{1 - \frac{1}{n}} &\leq&  \left(\sqrt{F(k)} + \frac{k}{R-\rho} \right)|E_k \cap B_R|^{1 - \frac{1}{q}}.
\end{eqnarray*}
Define $r_i = 1 + 2^{-i}$, $k_i = \frac{t}{2}(1 + 2^{-i})$ and apply the above inequality with $R = r_i$, $\rho = r_{i+1}$, $h = k_{i+1}$ and $k = k_i$. Set $x_i = |E_{k_i}\cap B_{r_i}|$; then 
$$
    (4cC)^{-1}\frac{t}{2^{i+2}}x_{i+1}^{\frac{n-1}{n}} \leq \left(\sqrt{F(t)} + \frac{t}{2^{-(i+1)}} \right)x_i^{1 - \frac{1}{q}}.
$$
Rearranging terms, we get
$$
    (4cC)^{-1}\,x_{i+1}^{\frac{n-1}{n}} \leq 16\left(\frac{\sqrt{F(t)}+t}{t}\right)4^i\,x_i^{1 - \frac{1}{q}},
$$
and so, since $x_i \geq x_{i+1}$, we obtain
$$
    (4cC)^{-1}\,x_{i+1} \leq 16\left(\frac{\sqrt{F(t)}+t}{t} \right)4^i\,x_i^{1 - \frac{1}{q} + \frac{1}{n}},
$$
which in turn can be written as
$$
    x_{i+1} \leq C_1\left(\frac{\sqrt{F(t)}+t}{t} \right)4^i x_i^{1+s} \quad \mbox{for} \quad s = \frac{q-n}{q n},
$$
and a constant $C_1 = C_1(c,C)$. Recall that by assumption $q > n$, so $s>0$. If $\delta_0$ is chosen small enough such that
$$
    \delta_0 \leq C_1^{-\frac{1}{s}}4^{-\frac{1}{s^2}},
$$
then
$$
    x_0 \coloneqq |E_t \cap B_2| \leq \delta_0 \left(\frac{t}{\sqrt{F(t)} + t} \right)^\frac{q n}{q-n}\leq \left(C_1\left(\frac{\sqrt{F(t)}+t}{t} \right)\right)^{-\frac{1}{s}}4^{-\frac{1}{s^2}}.
$$
We can now apply Lemma \ref{useful real analysis result} to obtain
$$
    \lim_{i \to \infty} x_i = 0,
$$
which means that $\inf_{B_1}u \geq t/2$.
\end{proof}

\section{Estimates for subsolutions}\label{subsolution-sct}

In this section, we derive estimates for subsolutions. The main result is the following Caccioppoli-type estimate, which extends the one established in \cite{VJ1}.

\begin{lemma}\label{di giorgi iteration}
Let $u \in H^1(B_{2r}(x_0))$ be a subsolution of
$$
    \mathcal{L}[u] \geq f(u),
$$
and denote $M_r = \sup_{B_r(x_0)}u$. Assume further that there is $\eta>0$ such that
\begin{equation}\label{coefs universally small}
    \|b_1\|_{L^{q}(B_{2r}(x_0))} + \|b_2\|_{L^{q}(B_{2r}(x_0))} + \|c\|_{L^{q/2}(B_{2r}(x_0))} \leq \eta.
\end{equation}
Then there are constants $\beta, \gamma \in (0,1)$ depending on $n,q$ such that for every $\rho < r$ and $t < M_r$ it holds
\begin{eqnarray*}
    c_2^{-1} \left(\frac{M_r^2}{F(t)} \right)^{\beta-1} \int_{B_\rho(x_0)}(u-t)_+^2 & \leq & \frac{1}{(r-\rho)^2}|A_t \cap B_r(x_0)|^{\gamma} \int_{B_r(x_0)} (u-t)_+^2 \\
    & + & \left(\frac{t+M_r}{r-\rho}\right)|A_t \cap B_r(x_0)|^{\frac{1}{2} + \gamma}\left(\int_{B_r(x_0)}(u-t)^2_+ \right)^{\frac{1}{2}}\\
    & + & t^2|A_t \cap B_r(x_0)|^{1 + \gamma},
\end{eqnarray*}
for $c_2 = c_2(n,\lambda,\Lambda)>0$, provided $\eta=\eta(n,\lambda,\Lambda)$ is chosen small enough.
\end{lemma}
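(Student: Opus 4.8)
The plan is to run a De Giorgi--type argument on the subsolution while, as in \cite{VJ1}, keeping rather than discarding the zeroth-order effect of the nonlinearity. Fix $\zeta\in C_0^\infty(B_r(x_0))$ with $\zeta\equiv 1$ on $B_\rho(x_0)$ and $|D\zeta|\le 2/(r-\rho)$, put $v=(u-t)_+$, and test the weak subsolution inequality with $\varphi=v\,\zeta^2\ge 0$, an admissible test function in $H^1_0(B_r(x_0))$. We may assume $M_r<\infty$ (by local boundedness, which holds because $q>n$) and $F(t)>0$; otherwise the claim is trivial. Since $\{v>0\}\subseteq A_t$ and $f$ is nondecreasing, $-\int\varphi f(u)\le -f(t)\int v\,\zeta^2$; moving this term to the left and using $A(x)Du\cdot Du\ge\lambda|Du|^2$, $|A(x)Du|\le\Lambda|Du|$ on $\{v>0\}$ produces in a single step
\begin{equation*}
  \lambda\int\zeta^2|Dv|^2 + f(t)\int v\,\zeta^2 \;\le\; 2\Lambda\int v\,\zeta\,|Dv|\,|D\zeta| + \Big|\int u\,b_1\cdot D\varphi\Big| + \Big|\int\varphi\,b_2\cdot Du\Big| + \Big|\int c\,u\,\varphi\Big|.
\end{equation*}
The cross term is handled by \eqref{young inequality} and $|D\zeta|\le 2/(r-\rho)$, at the cost of a fraction of $\lambda\int\zeta^2|Dv|^2$.

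\textbf{The lower-order terms.} This is the crux. On $A_t=\mathrm{supp}\,\varphi$ write $u=v+t$ and use $u\le M_r$ there. Each of the three integrals is estimated by H\"older's inequality with the exponents in \eqref{integrability of exponents} combined with the Sobolev embedding $H^1\hookrightarrow L^{2^*}$ (with the usual modification if $n=2$) applied to $v\zeta$; since $q>n$, all the Lebesgue exponents that occur are strictly subcritical, so each term splits into a gradient piece --- which can be absorbed into $\lambda\int\zeta^2|Dv|^2$ once $\eta=\eta(n,\lambda,\Lambda)$ is small --- times a measure factor $|A_t\cap B_r|$ to a positive $q$- and $n$-dependent power, plus remainders carrying further powers of $|A_t\cap B_r|$. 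Tracking these powers, which is exactly the delicate point, one arrives at an inequality of the form
\begin{equation*}
  \tfrac{\lambda}{2}\int\zeta^2|Dv|^2 + f(t)\int v\,\zeta^2 \;\le\; \mathcal{R}:=\frac{C}{(r-\rho)^2}\int_{B_r}v^2 + \frac{t+M_r}{r-\rho}\,|A_t\cap B_r|^{\frac12}\Big(\int_{B_r}v^2\Big)^{\frac12} + t^2\,|A_t\cap B_r|.
\end{equation*}

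\textbf{Two consequences and interpolation.} From the single bound $\le\mathcal R$ one extracts two estimates for $\int_{B_\rho}v^2$. First, dropping $f(t)\int v\zeta^2\ge 0$, the Gagliardo--Nirenberg--Sobolev inequality for $v\zeta$ followed by H\"older on $\{v>0\}\subseteq A_t$ gives $\int_{B_\rho}v^2\le C\,|A_t\cap B_r|^{2/n}\,\mathcal R$. Second, dropping the Dirichlet term, $v\le M_r$ turns $\int v\zeta^2$ into at least $M_r^{-1}\int_{B_\rho}v^2$, and $f(t)\ge F(t)/t>F(t)/M_r$ (here $t<M_r$ is used) gives $\int_{B_\rho}v^2\le(M_r^2/F(t))\,\mathcal R$. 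Raising the first inequality to a power $\beta\in(0,1)$, the second to $1-\beta$, multiplying and rearranging, the left side becomes $(M_r^2/F(t))^{\beta-1}\int_{B_\rho}v^2$ (times a constant) and the right side a combination of the three terms of $\mathcal R$ weighted by powers of $|A_t\cap B_r|$; a compatible choice of $\beta$ and of the exponents in the preceding H\"older--Sobolev estimates --- all depending on $n$ and $q$, with $\gamma$ essentially $2\beta/n$ --- makes these powers equal to $\gamma,\ \tfrac12+\gamma,\ 1+\gamma$ for a single $\gamma\in(0,1)$, reproducing precisely the three terms of the statement with $c_2=c_2(n,\lambda,\Lambda)$.

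\textbf{Main obstacle.} The real work, and the step I expect to be hardest, is the bookkeeping of the lower-order terms: one must verify that after H\"older, Sobolev, the splitting $u=v+t$, and absorption, every surviving remainder from $b_1,b_2,c$ carries a power of $|A_t\cap B_r|$ at least as large as the one in the matching term of $\mathcal R$ (namely $0$, $\tfrac12$, $1$), and that the positive exponents coming out of the subcritical estimates (of the form $\tfrac{q-2}{2q}-\tfrac{n-2}{2n}$ and the like) are precisely what fix the admissible $\beta$ and hence $\gamma$; this is where $q>n$ is genuinely needed and sharp. A subsidiary point is that in the absorption step the gradient contributions from $b_1$ and $b_2$ must be routed only into $\lambda\int\zeta^2|Dv|^2$, leaving the term $f(t)\int v\zeta^2$ untouched, since that term is the sole source of the factor $M_r^2/F(t)$.
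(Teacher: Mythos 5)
Your proposal follows the paper's proof essentially step for step: same test function $\varphi=(u-t)_+\zeta^2$, same absorption of the $b_1,b_2,c$ contributions into the Dirichlet energy via H\"older, the subcritical Sobolev embedding and the smallness \eqref{coefs universally small}, the same lower bound $\int f(u)\varphi\ge \frac{F(t)}{M_r^2}\int (u-t)_+^2\zeta^2$, and the same $\beta$-interpolation between the two ``good'' terms (you do it multiplicatively, $X\le A^\beta B^{1-\beta}$; the paper does the equivalent additive version via the generalized Young inequality in \eqref{gen young trick} --- no substantive difference).

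The one point to correct is in the step you yourself flag as the crux. The remainders from the lower-order terms do \emph{not} carry measure powers at least $0,\tfrac12,1$: H\"older with $b_1\in L^q$ and $c\in L^{q/2}$ produces $|A_t\cap B_r|^{\frac12-\frac1q}$ and $|A_t\cap B_r|^{1-\frac2q}$, which are \emph{smaller} exponents (hence larger quantities, since the measure is small), so your intermediate $\mathcal R$ with exponents exactly $0,\tfrac12,1$ is not attainable and the verification criterion you state is backwards. What saves the argument --- and this is exactly where $\beta q>n$ enters in the paper --- is that the interpolation step multiplies everything by the extra factor $|A_t\cap B_r|^{2\beta/n}$, and the deficits $\tfrac1q$ and $\tfrac2q$ are then absorbed by taking $\gamma=\tfrac{2\beta}{n}-\tfrac2q>0$ rather than $\gamma=\tfrac{2\beta}{n}$: the resulting exponents $\tfrac{2\beta}{n}$, $\tfrac12-\tfrac1q+\tfrac{2\beta}{n}$, $1-\tfrac2q+\tfrac{2\beta}{n}$ then dominate $\gamma$, $\tfrac12+\gamma$, $1+\gamma$ respectively. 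So the strategy is sound and matches the paper's, but the matching condition must be ``each remainder's exponent falls short of $0,\tfrac12,1$ by at most $\tfrac2q$, which is strictly less than the gain $\tfrac{2\beta}{n}$,'' not ``at least as large as $0,\tfrac12,1$.''
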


Before delivering the proof, a few comments are in place. In \cite{VJ1}, the author proved a simplified estimate with $\beta = 1/2$ and $\gamma = 1/n$, and the second and third elements of the right-hand side of the inequality were nonexistent. They appear here due to the lower-order coefficients, which significantly slow down the decay. In our case, choosing suitable constants $\beta$ and $\gamma$ (that depend upon each other) is crucial to ensure a decay of the same order. We emphasize that the integrability of the lower-order coefficients being greater than the dimension is also key in finding those parameters.

\begin{proof}
With no loss of generality, we may assume $x_0 = 0$. We use as test function $\varphi \coloneqq (u-t)_+\zeta^2$, where $\zeta \colon \mathbb{R}^n \to [0,1]$ is a smooth cutoff which satisfies
$$
    \zeta = 1 \quad \mbox{in} \quad B_\rho, \quad \zeta = 0 \quad \mbox{in} \quad B_r \quad \mbox{and} \quad \|D\zeta\|_{L^\infty(B_r)} \leq 2(r-\rho)^{-1}.  
$$
We obtain
\begin{eqnarray*}
    \int_{B_r}|D\left((u-t)_+\zeta \right)|^2 &\leq& C \int_{B_r} (u-t)_+^2|D\zeta|^2 - \int_{B_r} \varphi f(u) + \int_{B_r} \zeta^2 (u-t)_+ b_2 \cdot Du\\
    & + &  \int_{B_r}cu \varphi - ub_1 \cdot D\varphi, 
\end{eqnarray*}
for $C = C(\lambda,\Lambda)$. Let us estimate the terms involving $c$ and $b_1$ first. We repeat the very same arguments of \cite{Lin}*{Theorem 4.1} to obtain
\begin{eqnarray*}
    \int_{B_r}cu\varphi & \leq &  \|c\|_{L^{q/2}(B_r)}\left(|A_t\cap B_r|^{\frac{2}{n}-\frac{2}{q}}\int_{B_r}|D((u-t)_+\zeta)|^2 + t^2|A_t \cap B_r|^{1-\frac{2}{q}} \right).
\end{eqnarray*}
To estimate the $b_1$ term, we proceed as follows
\begin{eqnarray*}
    \int_{B_r} ub_1 \cdot D\varphi & = & \int_{B_r} tb_1 \cdot D\varphi + \int_{B_r} (u-t)_+b_1 \cdot D\varphi \\
    & = & \int_{B_r}2 t\zeta (u-t)_+ b_1 \cdot D\zeta + \int_{B_r} t\zeta^2 b_1 \cdot D(u-t)_+\\
    & + & \int_{B_r} 2\zeta (u-t)^2_+b_1 \cdot D\zeta + \int_{B_r} \zeta^2(u-t)_+ b_1 \cdot D(u-t)_+\\
    & = & I + II + III + IV.
\end{eqnarray*}
We estimate terms $I$ and $III$ using the H\"older inequality which leads to 
$$
    |I| + |III| \leq 4\|b_1\|_{q}\left(\frac{t+M_r}{r-\rho}\right)|A_t \cap B_r|^{\frac{1}{2}-\frac{1}{q}} \left(\int_{B_r}(u-t)_+^2\right)^{\frac{1}{2}}.
$$
By Young's inequality \eqref{young inequality}, we obtain
\begin{eqnarray*}
    |II| & \leq & \int_{B_r}|t\zeta b_1|\,|\zeta D(u-t)_+|\\
    & \leq & \frac{1}{2\epsilon}t^2\|b_1\|^2_{q}|A_t \cap B_r|^{1 - \frac{2}{q}} + \frac{\epsilon}{2} \int_{B_r}|\zeta D(u-t)_+|^2,
\end{eqnarray*}
for any $\epsilon > 0$. The term $IV$ can be incorporated into the estimate involving the term $b_2$. By monotonicity of $f$ and for $x \in A_t \cap B_r$, we have
$$
    f(u(x)) \geq f(t) \geq \frac{F(t)}{t} \geq \frac{F(t)}{M_r^2}(u(x)-t),
$$
which readily implies
$$
    \int_{B_r} f(u)\varphi \geq \frac{F(t)}{M_r^2}\int_{B_r}(u-t)_+^2\zeta^2.
$$
Putting these estimates together with the equation information, we obtain
\begin{eqnarray*}
    \int_{B_r}|D\left((u-t)_+\zeta \right)|^2& \leq& \frac{C}{(r-\rho)^2} \int_{B_r} (u-t)_+^2 - \frac{F(t)}{M_r^2}\int_{B_r}(u-t)_+^2\zeta^2\\
    & +& \int_{B_r} \zeta^2 (u-t)_+ (b_1 + b_2) \cdot Du + t^2\|c\|_{L^{q/2}(B_r)} |A_t \cap B_r|^{1-\frac{2}{q}}\\
    & + & \|c\|_{q/2}|A_t\cap B_r|^{\frac{2}{n}-\frac{2}{q}}\int_{B_r}|D((u-t)_+\zeta)|^2\\
    & + & 4\left(\frac{t+M_r}{r-\rho}\right)\|b_1\|_{q}|A_t \cap B_r|^{\frac{1}{2} - \frac{1}{q}}\left(\int_{B_r}(u-t)^2_+ \right)^{\frac{1}{2}}\\
    & + & \frac{1}{2\epsilon}t^2\|b_1\|^2_{q}|A_t \cap B_r|^{1 - \frac{2}{q}} + \frac{\epsilon}{2}\int_{B_r}|\zeta D(u-t)_+|^2.
\end{eqnarray*}
Now, let us concentrate on the third term in the right-hand side of the previous inequality. Define $b \coloneqq b_1 + b_2$. For $\epsilon > 0$, Young's inequality \eqref{young inequality} gives
\begin{eqnarray*}
    \int_{B_r} \zeta^2 (u-t)_+b \cdot Du & \leq & \int_{B_r} \zeta^2 (u-t)_+\,|b| \, |Du|\\
    & = &  \int_{B_r}  \zeta(u-t)_+\,|b| \, |\zeta\, D(u-t)_+|\\
    & \leq & \frac{\epsilon}{2} \int_{B_r}  |\zeta\, D(u-t)_+|^2 + \frac{1}{2\epsilon} \int_{B_r}  \zeta^2(u-t)^2_+\,|b|^2\\
    & \leq & \frac{16\epsilon}{(r-\rho)^2}\int_{B_r} (u-t)_+^2 + 4\epsilon \int_{B_r} |D(\zeta\, (u-t)_+)|^2\\
    & + &  \frac{1}{4\epsilon} \int_{B_r}  \zeta^2(u-t)^2_+\,|b|^2,
\end{eqnarray*}
where we have used the relation
$$
    D(\zeta(u-t)_+) = \zeta D(u-t)_+ + (u-t)_+D\zeta.
$$
Therefore,
\begin{eqnarray*}
    C_0\int_{B_r}|D\left((u-t)_+\zeta \right)|^2& \leq& \frac{C}{(r-\rho)^2} \int_{B_r} (u-t)_+^2 - \frac{F(t)}{M_r^2}\int_{B_r}(u-t)_+^2\zeta^2\\
    & +& \frac{16\epsilon}{(r-\rho)^2}\int_{B_r} (u-t)_+^2 + \frac{1}{2\epsilon} \int_{B_r}  \zeta^2(u-t)^2_+\,|b|^2\\
    & + & C(\epsilon,c,b_1)t^2 |A_t \cap B_r|^{1 - \frac{2}{q}}\\
    & + & 4\left(\frac{t+M_r}{r-\rho}\right)\|b_1\|_{q}|A_t \cap B_r|^{\frac{1}{2} - \frac{1}{q}}\left(\int_{B_r}(u-t)^2_+ \right)^{\frac{1}{2}},
\end{eqnarray*}
for $C_0 = (1-4\epsilon-\|c\|_{q/2})$. Denoting $w = (u-t)_+ \zeta$, we observe that by the H\"older inequality we have
\begin{eqnarray*}
    \int_{B_r}  w^2\,|b|^2 & \leq & \|b\|_{L^{q}(B_r)}^2\,\|w\|_{L^{q'}(B_r)}^2,
\end{eqnarray*}
for $q' = \frac{2q}{q-2}$. Since $q>n$, we have $q' < 2^\star$, where $2^\star$ is the critical Sobolev exponent. By the Gagliardo-Nirenberg-Sobolev inequality, we have
\begin{eqnarray*}
    \int_{B_r}  w^2\,|b|^2 & \leq & C^\star \|b\|_{L^{q}(B_r)}^2\,\|Dw\|_{L^2(B_r)}^2,
\end{eqnarray*}
for a dimensional constant $C^\star$. Thus, we obtain
\begin{eqnarray}\label{ineq in cacciop}
    C'\int_{B_r}|D\left((u-t)_+\zeta \right)|^2& \leq& \frac{1}{(r-\rho)^2} \int_{B_r} (u-t)_+^2 - \frac{F(t)}{M_r^2}\int_{B_r}(u-t)_+^2\zeta^2\\
    & + & t^2 |A_t \cap B_r|^{1-\frac{2}{q}} \nonumber \\ 
    & + & \left(\frac{t+M_r}{r-\rho}\right)|A_t \cap B_r|^{\frac{1}{2} - \frac{1}{q}}\left(\int_{B_r}(u-t)^2_+ \right)^{\frac{1}{2}}, \nonumber
\end{eqnarray}
where
$$
    C' = \left(1-4\epsilon-\|c\|_{q/2} - \frac{C^\star \|b\|_{L^{q}(B_r)}^2}{4\epsilon}\right) C_1^{-1}, \quad \mbox{for} \quad C_1 = C_1(n,\lambda,\Lambda,\|b_1\|_{q},\|c\|_{q/2}).
$$
Picking $\epsilon = \frac{1}{36}$ and assuming, by \eqref{coefs universally small}, $\|c\|_{q/2} + C^\star\|b\|_{L^{q}(B_r)}^2 \leq 1/36$, we have $C' \geq 1/2$. By the H\"older and Sobolev inequalities, we have
\begin{equation}\label{holder and sobolev in cacciopoli}
    \int_{B_r}w^2 \leq \left(\int_{B_r}w^{2^\star} \right)^{\frac{2}{2^{\star}}}|A_t \cap B_r|^{1 - \frac{2}{2^\star}} \leq C_3|A_t \cap B_r|^{\frac{2}{n}}\int_{B_r}|Dw|^2,
\end{equation}
for some dimensional constant $C_3$. Now we let $\beta \in (0,1)$ be a positive number satisfying 
$$
    \beta q > n
$$
which is possible since $q > n$. For this choice, it holds
$$
    1 - \frac{2}{q} + \frac{2\beta}{n} > 1.  
$$
We use the following generalized Young's inequality: given any nonnegative real numbers $a$ and $b$, 
$$
    ab \leq \frac{a^{p_1}}{p_1} + \frac{b^{p_1'}}{p_1'} \quad \text{for} \quad \frac{1}{p_1'} = 1 - \frac{1}{p_1}.
$$
An application of this inequality with $p_1 = 1/\beta$ gives
\begin{equation}\label{gen young trick}
     \frac{F(t)}{M_r^2} + \frac{c^{-1}}{2}|A_t \cap B_r|^{\frac{-2}{n}} \geq \frac{c_1^{-1/\beta}}{2\beta(1-\beta)} \left(\frac{F(t)}{M_r^2} \right)^{1-\beta}|A_t \cap B_r|^{-\frac{2\beta}{n}}.
\end{equation}
We use inequalities \eqref{holder and sobolev in cacciopoli} and \eqref{gen young trick} to further estimate \eqref{ineq in cacciop}, obtaining
$$
     c_2^{-1} \left(\frac{F(t)}{M_r^2} \right)^{1-\beta}|A_t \cap B_r|^{\frac{-2\beta}{n}}\int_{B_r}(u-t)_+^2\zeta^2 \leq 
$$
\begin{eqnarray*}
    & & \frac{1}{(r-\rho)^2} \int_{B_r} (u-t)_+^2 + \left(\frac{t+M_r}{r-\rho}\right)|A_t \cap B_r|^{\frac{1}{2} - \frac{1}{q}}\left(\int_{B_r}(u-t)^2_+ \right)^{\frac{1}{2}}\\
    & & +  t^2 |A_t \cap B_r|^{1-\frac{2}{q}}.
\end{eqnarray*}
Rearranging terms and using that $\zeta = 1$ in $B_{\rho}$ we get
\begin{eqnarray*}
    c_2^{-1}\int_{B_\rho}(u-t)_+^2 & \leq & \frac{1}{(r-\rho)^2}|A_t \cap B_r|^{\frac{2\beta}{n}} \left(\frac{M_r^2}{F(t)} \right)^{1-\beta} \int_{B_r} (u-t)_+^2\\
    & + & t^2|A_t \cap B_r|^{1 - \frac{2}{q}+\frac{2\beta}{n}}\left(\frac{M_r^2}{F(t)} \right)^{1-\beta}\\
    & + & \left(\frac{M_r^2}{F(t)} \right)^{1-\beta}\left(\frac{t+M_r}{r-\rho}\right)|A_t \cap B_r|^{\frac{1}{2} - \frac{1}{q} + \frac{2\beta}{n}}\left(\int_{B_r}(u-t)^2_+ \right)^{\frac{1}{2}}.
\end{eqnarray*}
\end{proof}

The iteration of this result leads to the following lemma.

\begin{lemma}\label{est from below}
Let $u \in H^1(B_{2r}(x_0))$ be a subsolution to
$$
    \mathcal{L}[u] \geq f(u),
$$
and denote $M_r \coloneqq \sup_{B_r(x_0)}u$. Assume \eqref{coefs universally small} holds. There exist a constant $c_0 = c_0(n,\lambda,\Lambda)>0$ such that if $0 < M_r \leq 2u(x_0)$, then
$$
    \frac{u(x_0)}{\sqrt{F\left(\frac{u(x_0)}{2} \right)}} \geq c_0r.
$$
\end{lemma}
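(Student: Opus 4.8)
The plan is to obtain the estimate by a De Giorgi iteration built on the Caccioppoli inequality of Lemma \ref{di giorgi iteration}, arguing by contradiction. First I would translate so that $x_0 = 0$ and, by the standing smoothing reduction, treat $u$ and the coefficients of $\mathcal L$ as smooth; writing $a := u(0) = u(x_0)$, the hypothesis $0 < M_r \le 2a$ forces $a \ge M_r/2 > 0$, where $M_r := \sup_{B_r} u \le 2a$. If $F(a/2) = 0$ there is nothing to prove (the left-hand side is $+\infty$), so assume $F(a/2) > 0$; we may also assume $r \le 1$. Suppose, for contradiction, that $\frac{a}{\sqrt{F(a/2)}} < c_0 r$ for a small constant $c_0$ — depending only on $n, \lambda, \Lambda$ and on the exponents $\beta, \gamma$ of Lemma \ref{di giorgi iteration} — to be fixed at the end; equivalently $\bigl(\tfrac{a^2}{F(a/2)\,r^2}\bigr)^{1-\beta} < c_0^{\,2(1-\beta)}$, that is, $F(a/2)$ is large compared with $a^2/r^2$.

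I would then run the iteration with radii $r_i := \tfrac r2(1 + 2^{-i})$ (so $r_0 = r$, $r_i \downarrow r/2$, $r_i - r_{i+1} \simeq r\,2^{-i}$), levels $t_i := \tfrac a2 + \tfrac a4(1 - 2^{-i})$ (so $t_0 = a/2$, $t_i \uparrow 3a/4$, $t_{i+1} - t_i \simeq a\,2^{-i}$), and quantities $Y_i := \int_{B_{r_i}}(u - t_i)_+^2$ and $Z_i := Y_i / (\mathcal M a^2 r^n)$, with $\mathcal M = \mathcal M(n)$ fixed large enough that $Z_0 \le 1$ — possible because $Y_0 \le |B_r|\,(M_r - a/2)^2 \le \tfrac94 \omega_n\, r^n a^2$, which is precisely where $M_r \le 2a$ is used. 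For each $i$ I would apply Lemma \ref{di giorgi iteration} with inner radius $r_{i+1}$, outer radius $r_i$ and level $t_{i+1}$ (admissible, since $t_{i+1} < a \le M_{r_i}$ and \eqref{coefs universally small} holds), together with: (a) $F(t_{i+1}) \ge F(a/2)$ and $M_{r_i} \le 2a$, so the factor $(M_{r_i}^2/F(t_{i+1}))^{\beta-1}$ on the left is $\ge (4a^2/F(a/2))^{\beta-1}$; (b) $t_{i+1} \le a$ and $t_{i+1} + M_{r_i} \le 3a$ on the right; (c) the Chebyshev bound $|A_{t_{i+1}} \cap B_{r_i}| \le (t_{i+1} - t_i)^{-2}\, Y_i \simeq 4^i a^{-2}\, Y_i$; and (d) $\int_{B_{r_i}}(u - t_{i+1})_+^2 \le Y_i$.

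Feeding these into the three right-hand-side terms of Lemma \ref{di giorgi iteration} — keeping the sharper $|A_t \cap B_r|$-exponents $\tfrac{2\beta}{n}$, $\tfrac12 - \tfrac1q + \tfrac{2\beta}{n}$, $1 - \tfrac2q + \tfrac{2\beta}{n}$ that come out of its proof — and then substituting $Y_i = \mathcal M a^2 r^n Z_i$, the powers of $a$ and $r$ reorganise so that each term becomes a universal constant times $\bigl(\tfrac{a^2}{F(a/2)\,r^2}\bigr)^{1-\beta}\, r^{\rho_j}\, \mathbf B^{\,i}\, Z_i^{\,1+b_j}$, where $b_j \in \{\tfrac{2\beta}{n},\ \tfrac{2\beta}{n}-\tfrac1q,\ \tfrac{2\beta}{n}-\tfrac2q\}$ and $\rho_j \in \{0,\ 1-\tfrac nq,\ 2-\tfrac{2n}{q}\}$. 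This is the place where $q > n$ is decisive, twice: it forces $\rho_j \ge 0$, so that (as $r \le 1$) the factors $r^{\rho_j}$ are harmless; and — choosing $\beta$ with $\beta q > n$, as Lemma \ref{di giorgi iteration} permits — it forces $b_j > 0$, so the recursion is genuinely super-linear. Using the contradiction hypothesis to replace $\bigl(\tfrac{a^2}{F(a/2)r^2}\bigr)^{1-\beta}$ by $c_0^{\,2(1-\beta)}$, and dominating the three $Z_i$-exponents by the smallest one, $\gamma := \tfrac{2\beta}{n} - \tfrac2q$ (valid as the $Z_i$ stay $\le 1$), one arrives at a recursion
\[
  Z_{i+1}\ \le\ \mathbf C\, c_0^{\,2(1-\beta)}\,\mathbf B^{\,i}\, Z_i^{\,1+\gamma},
\]
with $\mathbf B > 1$ and $\mathbf C$ depending only on $n, q, \lambda, \Lambda$. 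Finally, by Lemma \ref{useful real analysis result} (or a direct induction giving $Z_i \le \mathbf B^{-i/\gamma}$), once $c_0$ is chosen so small that $\mathbf C\, c_0^{\,2(1-\beta)} \le \mathbf B^{-1/\gamma}$, the normalisation $Z_0 \le 1$ lies below the threshold, so $Z_i \to 0$, i.e.\ $Y_i \to 0$; hence $\int_{B_{r/2}}(u - 3a/4)_+^2 = 0$, so $u \le 3a/4$ in $B_{r/2}(0)$ by continuity, contradicting $u(0) = a$. The main obstacle is exactly this exponent bookkeeping: propagating the powers of $a$, $r$ and $2^i$ through the three terms of Lemma \ref{di giorgi iteration} and checking that $q > n$ is what makes the recursion both super-linear and scale-consistent, so that the threshold can be met by a $c_0$ which does not degenerate as $r \to 0$.
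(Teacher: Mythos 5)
Your proposal is correct and follows essentially the same route as the paper's proof: a De Giorgi iteration based on Lemma \ref{di giorgi iteration} with levels increasing from $u(x_0)/2$ to $3u(x_0)/4$ and shrinking radii, Chebyshev's inequality to close the recursion, Lemma \ref{useful real analysis result} to force the iterates to zero, and the contradiction $\sup_{B_{r/2}}u\le \tfrac34 u(x_0)<u(x_0)$. The only difference is presentational: the paper rescales $v(x)=u(x_0+rx)$ to the unit scale so that all powers of $r$ are absorbed into $\Lambda_0$, whereas you work at scale $r$ and verify directly (and correctly) that the residual exponents $\rho_j$ and $b_j$ have the right signs thanks to $q>n$ and $\beta q>n$.
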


\begin{proof}
Define $v(x) \coloneqq u(x_0 + rx)$, for $x \in B_2$. In this setting, $0 < M_1 \coloneqq \sup_{B_1}v \leq 2v(0)$ and we have
$$
    \mathcal{L}^r[v] \geq r^2f(v),
$$
where
$$
    \mathcal{L}^r[v] = \mbox{div}(A(x_0 + rx)Dv + rb_1(x_0 + rx)v) + rb_2(x_0+rx)\cdot Dv + r^2c(x_0+rx)v.
$$
In terms of $v$, we need to show that
$$
    \Lambda_0 \coloneqq \left(\frac{v(0)}{\sqrt{r^2 F\left(\frac{v(0)}{2} \right)}}\right)^2 \geq c_0^2 > 0,
$$
for some universal constant $c_0$. Let $\tau, \rho$ be such that $1/2 < \rho < \tau < 1$ and $h,k$ such that $v(0)/2 \leq h < k \leq v(0)$. We use that $M_1 \leq 2v(0)$ and Lemma \ref{di giorgi iteration} to obtain
\begin{eqnarray*}
    4^{\beta-1}c_2^{-1}\Lambda_0^{\beta-1}\int_{A_k \cap B_\rho}(v-k)_+^2 & \leq & \frac{1}{(\tau-\rho)^2}|A_k \cap B_\tau|^{\gamma} \int_{A_h \cap B_\tau} (v-h)_+^2 \\
    & + & 2\left(\frac{h+M_1}{\tau-\rho}\right)|A_k \cap B_\tau|^{\frac{1}{2} + \gamma}\left(\int_{A_h \cap B_\tau}(v-h)^2_+ \right)^{\frac{1}{2}}\\
    & + &  4h^2|A_k \cap B_\tau|^{1 + \gamma},
\end{eqnarray*}
for constants $\beta, \gamma \in (0,1)$. Taking into account that
$$
    |A_k \cap B_\tau| \leq \frac{1}{(k-h)^2}\int_{A_h \cap B_\tau}(v-h)_+^2,
$$
the following inequality holds
$$
    4^{-2}c_2^{-1}\int_{A_k \cap B_\rho}(v-k)_+^2 \leq
$$
$$
     \Lambda_0^{1-\beta}\left(\frac{1}{(\tau-\rho)^2} + \frac{h+M_1}{(\tau-\rho)(k-h)}+\frac{h^2}{(k-h)^2}\right)\frac{1}{(k-h)^{2\gamma}}\left(\int_{A_h \cap B_\tau}(v-h)_+^2\right)^{1+\gamma}.
$$
Defining $r_i = 2^{-1}(1 + 2^{-i})$ and $k_i = v(0)(3/4 - 4^{-1-i})$ we can write the previous inequality for $h = k_i$, $k = k_{i+1}$, $\tau = r_i$ and $\rho = r_{i+1}$, thus obtaining
$$
    4^{-2}c_2^{-1}\left(\int_{A_{k_i} \cap B_{r_i}}(v-k_i)_+^2\right)^{-(1+\gamma)}\int_{A_{k_{i+1}} \cap B_{r_{i+1}}}(v-k_{i+1})_+^2 \leq
$$    
$$    
     \Lambda_0^{1-\beta}\left(\frac{1}{(r_{i+1}-r_i)^2} + \frac{k_i+M_1}{(r_i-r_{i+1})(k_{i+1}-k_i)} + \frac{2k_i^2}{(k_{i+1}-k_i)^2}\right)\frac{1}{(k_{i+1}-k_i)^{2\gamma}}.
$$
Setting
$$
    x_i = M_1^{-2}\int_{A_{k_i}\cap B_{r_i}}(v -k_i)^2_+,
$$
and taking into account the relations $k_{i+1} - k_i = 4^{-(i+1)}v(0)$, and $r_i - r_{i+1} = 2^{-(i+2)}$, we have the following recurrence relation
$$
    x_{i+1} \leq C\Lambda_0^{1-\beta}4^i\left(\frac{M_1}{v(0)}\right)^{2\gamma} x_i^{1 + \gamma},
$$
and so, using $M_1 \leq 2v(0)$ once more, we obtain
$$
    x_{i+1} \leq C\Lambda_0^{1-\beta}4^i x_i^{1 + \gamma},
$$
for a large constant $C$. Now, notice that it should hold
$$
    \Lambda_0^{1-\beta} \geq |B_1|^{-\frac{1}{n}}C^{-1}4^{-n},
$$
otherwise, we would have
$$
    |B_1| < \Lambda_0^{-n(1-\beta)}C^{-n}4^{-n^2}.
$$
and we would obtain
\begin{eqnarray*}
    x_0 &\coloneqq& M_1^{-2} \int_{A_{\frac{v(0)}{2}} \cap B_1}\left(v-\frac{v(0)}{2}\right)^2_+ \leq  |B_1|\\
        & \leq & \Lambda_0^{-n(1-\beta)}C^{-n}4^{-n^2}.
\end{eqnarray*}
This would then imply by Lemma \ref{useful real analysis result} that $x_i \to 0$ as $i \to \infty$, meaning that
$$
    \sup_{B_{1/2}}v \leq \frac{3}{4}v(0),
$$
which is a contradiction. Thus we have
$$
    \Lambda_0 \geq c_0^2 \coloneqq \left(|B_1|^{-\frac{1}{n}}C^{-1}4^{-n} \right)^{\frac{1}{1-\beta}}.
$$
\end{proof}

Finally, taking advantage of the sign of the right-hand side, we prove the following result, which is a consequence of the local maximum principle.
\begin{lemma}\label{local max princ consequence}
Let $u \in H^1(B_{2r}(x_0))$ be a subsolution of
$$
    \mathcal{L}[u] \geq 0,
$$
and define $M_r \coloneqq \sup_{B_r(x_0)}u$. There exists a positive small constant $\epsilon_0$ such that if
$$
    \left|\left\{u \geq u(x_0)/2 \right\}\cap B_r(x_0)\right| \leq \epsilon_0|B_r(x_0)|,
$$
then $M_r \geq 4u(x_0)$, provided \eqref{coefs universally small} holds for some small $\eta=\eta(n,\lambda,\Lambda)>0$.
\end{lemma}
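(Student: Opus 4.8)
The plan is to argue by contradiction through a De~Giorgi iteration, exploiting the favorable sign of the equation and the strict inequality $q>n$; the contradiction will be produced by the continuity of $u$ at $x_0$.

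First I would dispose of the trivial case $u(x_0)=0$, and then rescale: with $v(x)\coloneqq u(x_0+rx)$, $v$ is a locally bounded subsolution of $\mathcal L^r[v]\ge 0$ in $B_2$ in the notation of the proof of Lemma~\ref{est from below} — smooth after the reduction of Section~\ref{prelim-sct} — whose lower-order coefficients still obey \eqref{coefs universally small}, with $M_1\coloneqq\sup_{B_1}v=M_r<\infty$, $v(0)=u(x_0)>0$, and $|\{v\ge v(0)/2\}\cap B_1|\le\epsilon_0|B_1|$; the goal becomes $M_1\ge 4v(0)$. Assuming for contradiction $M_1<4v(0)$, I would run De~Giorgi with the increasing levels $k_j\coloneqq\frac{v(0)}{2}\bigl(\frac32-2^{-j-1}\bigr)$ (so $k_0=v(0)/2$ and $k_j\nearrow\frac34 v(0)<v(0)$), the decreasing radii $\rho_j\coloneqq\frac12(1+2^{-j})$ (so $\rho_0=1$, $\rho_j\searrow\frac12$), and the quantities $b_j\coloneqq|\{v\ge k_j\}\cap B_{\rho_j}|$, $J_j\coloneqq\int_{B_{\rho_j}}(v-k_j)_+^2$. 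Since the sets $\{v\ge k_{j+1}\}\cap B_{\rho_{j+1}}$ decrease, $b_j$ is nonincreasing with $b_0\le\epsilon_0|B_1|$, and $\epsilon_0=\epsilon_0(n,\lambda,\Lambda,q)$ will be taken small, in particular with $b_0<1$.

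The engine is a Caccioppoli estimate obtained exactly as in the proof of Lemma~\ref{di giorgi iteration}, using the test function $(v-k_j)_+\zeta_j^2$ and the sign $\mathcal L^r[v]\ge0$ in place of $\mathcal L[u]\ge f(u)$ — which only deletes the favorable term $-\frac{F(t)}{M_r^2}\int(v-k_j)_+^2\zeta_j^2$ from \eqref{ineq in cacciop} — with the lower-order contributions absorbed via \eqref{coefs universally small} and Young's inequality \eqref{young inequality}. This yields $\int_{B_{\sigma_j}}|D(v-k_j)_+|^2\le C4^j\bigl(J_j+v(0)^2 b_j^{1-2/q}\bigr)$ for $\sigma_j\coloneqq\frac12(\rho_j+\rho_{j+1})$ and $C=C(n,\lambda,\Lambda)$; and since $M_1<4v(0)$ gives $(v-k_j)_+\le 4v(0)$ on $B_1$, hence $J_j\le 16v(0)^2 b_j\le 16v(0)^2 b_j^{1-2/q}$ (as $b_j<1$), the right-hand side simplifies to $C4^j v(0)^2 b_j^{1-2/q}$. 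Multiplying $(v-k_j)_+$ by a cutoff equal to $1$ on $B_{\rho_{j+1}}$ and supported in $B_{\sigma_j}$, the Gagliardo--Nirenberg--Sobolev inequality (replacing $2^\star$ by a fixed large exponent when $n=2$) gives $\|(v-k_j)_+\zeta_j\|_{L^{2^\star}}^2\le C4^j v(0)^2 b_j^{1-2/q}$; comparing with $(k_{j+1}-k_j)^2 b_{j+1}\le\|(v-k_j)_+\zeta_j\|_{L^{2^\star}}^2\,b_{j+1}^{2/n}$ and using $(k_{j+1}-k_j)^2\approx v(0)^2 4^{-j}$ produces the recursion
$$
b_{j+1}\ \le\ C\,B^{\,j}\,b_j^{\,1+\nu},\qquad \nu\ \coloneqq\ \frac{n(1-2/q)}{n-2}-1\ =\ \frac{2(q-n)}{q(n-2)}\ >\ 0,
$$
with $B=B(n,q)>1$ and $C=C(n,\lambda,\Lambda,q)$. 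Choosing $\epsilon_0$ small enough that $b_0\le\epsilon_0|B_1|\le C^{-1/\nu}B^{-1/\nu^2}$, Lemma~\ref{useful real analysis result} gives $b_j\to0$; but $b_j$ decreases to $|\{v\ge\frac34 v(0)\}\cap\overline{B_{1/2}}|$, so $v\le\frac34 v(0)$ a.e.~on $B_{1/2}$, hence at $x=0$ by continuity, contradicting $v(0)>0$. Thus $M_r=M_1\ge 4v(0)=4u(x_0)$.

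The step I expect to be the main obstacle — though most of it is already carried out inside Lemma~\ref{di giorgi iteration} — is obtaining the Caccioppoli estimate with the exponent $1-2/q$ on the measure term (rather than the $1-2/n$ one gets from pure Sobolev): this is exactly where the unbounded lower-order coefficients are controlled, via \eqref{coefs universally small} and the assumption $q>n$, and it is what makes the De~Giorgi recursion superlinear ($\nu>0$). Everything else is bookkeeping of exponents, together with the structural choice of the terminal level $\frac34 v(0)<v(0)$, which is what turns the convergence $b_j\to0$ into a contradiction with $v(0)\le\sup_{B_{1/2}}v$.
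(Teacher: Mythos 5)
Your argument is correct, but it is not the route the paper takes. The paper's proof is much shorter: it sets $v=(u-u(x_0)/2)_+$, observes that $v$ is a subsolution of $\mathcal{L}[v]\ge g\,\mathcal{X}_{\{v>0\}}$ with $g=-\tfrac{u(x_0)}{2}(\div(b_1)+c)$, and then invokes the quantitative local maximum principle of \cite{SS21}*{Theorem 2.1}, whose right-hand side involves $\|v\|_{L^1(B_r(x_0))}$. The hypothesis on the measure of $\{u\ge u(x_0)/2\}$ converts $\|v\|_{L^1}$ into $\epsilon_0(M_r-u(x_0)/2)$, while \eqref{coefs universally small} makes the inhomogeneous contribution a small multiple of $u(x_0)$; since $\sup_{B_{r/2}(x_0)}v\ge v(x_0)=u(x_0)/2$, choosing $\epsilon_0$ and $\eta$ small forces $M_r\ge 4u(x_0)$. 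You instead re-derive the underlying De~Giorgi iteration by hand, running Lemma~\ref{di giorgi iteration} with $f\equiv0$ under the contradiction hypothesis $M_1<4v(0)$ and terminating at the level $\tfrac34v(0)$. I checked your exponents: the recursion $b_{j+1}\le CB^jb_j^{1+\nu}$ with $\nu=\frac{2(q-n)}{q(n-2)}$ is right, the $v(0)^2$ factors cancel so $C$ is universal, and the endgame ($b_j\to0$ gives $v(0)\le\tfrac34v(0)$, contradiction) is sound. What the paper's approach buys is brevity and a cleaner statement of where the unbounded coefficients enter (they are packaged once, inside the cited local maximum principle); what yours buys is self-containedness, at the cost of repeating an iteration that is already encapsulated elsewhere. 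Two small points to watch in your version: the rescaled coefficient norms are $r^{1-n/q}\|b_i\|_{L^q(B_{2r}(x_0))}$ and $r^{2-2n/q}\|c\|_{L^{q/2}(B_{2r}(x_0))}$, so the reduction to \eqref{coefs universally small} on $B_2$ is immediate only for $r\le1$ (the same caveat is implicit in the paper's own estimate); and the $n=2$ case needs the substitute Sobolev exponent you mention. Neither is a gap.
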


\begin{proof}
This is a consequence of the classical local maximum principle. Define $v = (u - u(x_0)/2)_+$ and observe it solves
$$
    \mathcal{L}[v] \geq g\,\mathcal{X}_{\{v>0 \}} \quad \mbox{for} \quad g = -\frac{u(x_0)}{2}\left(\mbox{div}(b_1) + c\right).
$$
By the local maximum principle (see \cite{SS21}*{Theorem 2.1}), we obtain
\begin{eqnarray*}
        \sup_{B_{r/2}(x_0)}v & \leq & C\left(\|v\|_{L^1(B_r(x_0))} + u(x_0)\left(r^{2 - \frac{2n}{q}}\|c\|_{L^{q/2}(B_{2r}(x_0))} + r^{2 - n/q}\|b_1\|_{L^q(B_{2r}(x_0))}\right) \right)\\
                    & \leq & C\left( \epsilon_0\left(M_r - \frac{u(x_0)}{2}\right) + u(x_0)r\left(\|c\|_{L^{q/2}(B_{2r}(x_0))} + \|b_1\|_{L^q(B_{2r}(x_0))}\right) \right).
\end{eqnarray*}
As a consequence,
$$
    u(x_0)\left(1 + C\epsilon_0  - Cr\left(\|c\|_{L^{q/2}(B_{2r}(x_0))} + \|b_1\|_{L^q(B_{2r}(x_0))}\right) \right) \leq (1-C\epsilon_0) M_r.
$$
Picking $\epsilon_0$ small enough and using \eqref{coefs universally small} for some small $\eta$, we obtain $M_r \geq 4u(x_0)$.
\end{proof}

\section{Proof of main results}

In this section, we gather the results from the previous sections and add the developments that yield the main theorems. 

\subsection{On the Keller-Osserman condition}\label{subsct keller-osserman}

We prove the following lower bound for subsolutions.

\begin{proposition}\label{integral lower bound}
Let $u \in H^1(B_{2R}(x_0))$ be a continuous and nonnegative subsolution of
$$
    \mathcal{L}[u] \geq f(u),
$$
and assume \eqref{coefs universally small} holds with $r = R$. There is a positive constant $c = c(n,\lambda,\Lambda)$, such that if $u(x_0)>0$, then
$$
    \int_{u(x_0)/4}^{\sup_{B_R(x_0)}u}\frac{dt}{\sqrt{F(t)}} \geq cR.
$$
\end{proposition}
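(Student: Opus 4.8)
The plan is to leverage the Caccioppoli-type estimate of Lemma~\ref{di giorgi iteration} on the subsolution side, together with the De Giorgi–type iteration packaged in Lemma~\ref{est from below}, but now iterated across a \emph{range of levels} rather than at a single scale. Since $f$ is nondecreasing and $F$ is convex, the quantity $t/\sqrt{F(t)}$ is (essentially) nonincreasing in $t$, so a lower bound of the form $u(x_0)/\sqrt{F(u(x_0)/2)} \ge c_0 r$ at one scale should, after a summation/telescoping over dyadic levels between $u(x_0)/4$ and $M_R := \sup_{B_R(x_0)}u$, produce the integral bound $\int_{u(x_0)/4}^{M_R} dt/\sqrt{F(t)} \gtrsim R$. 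Concretely, I would set $M_j = 2^j u(x_0)$ for $j = 0, 1, \dots, N$ where $N$ is chosen so that $2^N u(x_0) \le M_R < 2^{N+1} u(x_0)$ (if $M_R$ is infinite the statement is about the divergence of the integral and the argument only gets easier; if $N \le 1$ the integral is over a bounded range and one argues directly).

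First I would reduce to the situation $0 < M_r \le 2 u(x_0)$ that Lemma~\ref{est from below} requires. The point is a doubling/measure dichotomy: on each dyadic shell of radii, either the supremum of $u$ roughly doubles as we shrink the ball by a fixed factor — in which case we can apply Lemma~\ref{est from below} on that shell to get $r_j \lesssim M_j/\sqrt{F(M_j/2)}$ for the corresponding radius $r_j$ — or the superlevel set $\{u \ge u(x_0)/2\}$ occupies a small fraction of the ball, in which case Lemma~\ref{local max princ consequence} forces $M_r \ge 4 u(x_0)$, again pushing us up a dyadic level. Iterating this dichotomy from $B_R$ downward, and keeping track of how much the radius has to shrink at each step, the radii $r_j$ used at the $j$-th level satisfy $\sum_j r_j \gtrsim R$ (a geometric series argument: we are allowed to consume only a geometrically decreasing fraction of $R$ at successively higher levels, but the number of levels is controlled because $M_R$ bounds how high we can go). Combining $r_j \lesssim M_j / \sqrt{F(M_j/2)}$ with the monotonicity of $t \mapsto t/\sqrt{F(t)}$ to compare the discrete sum $\sum_j M_j/\sqrt{F(M_j/2)}$ to the integral $\int_{u(x_0)/4}^{M_R} dt/\sqrt{F(t)}$ then closes the estimate, after rescaling back from $v(x) = u(x_0 + rx)$ to $u$.

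I expect the main obstacle to be the bookkeeping in the dichotomy iteration: making precise the claim that, passing from $B_{2R}$ down to smaller and smaller balls while climbing the dyadic ladder in the values of $u$, the total radius budget spent is still comparable to $R$. One must verify that the smallness hypothesis \eqref{coefs universally small} continues to hold on each of the nested balls (it does, since the uniformly local norms are monotone under shrinking domains, and this is precisely why the hypothesis is stated with $r = R$), and that the constants from Lemma~\ref{est from below} and Lemma~\ref{local max princ consequence} do not degrade along the iteration (they are universal, depending only on $n, \lambda, \Lambda$, which is the crucial structural point). A secondary technical point is the comparison between the Riemann sum over dyadic levels and the integral: because $F$ is merely convex and continuous, $t/\sqrt{F(t)}$ need not be monotone in complete generality, but $F(2t) \ge F(t)$ and $F(2t) \le 2 f(2t) \cdot t$ type inequalities give enough control on dyadic blocks $[M_j/2, M_j]$ to bound $\int_{M_j/2}^{M_j} dt/\sqrt{F(t)}$ from above by a constant times $M_j/\sqrt{F(M_j/2)}$, which is exactly what we need.
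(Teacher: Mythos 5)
You have the right overall strategy --- iterate Lemma~\ref{est from below} across dyadic levels of the supremum and telescope --- but the execution has a genuine gap precisely at the point you flag as "the main obstacle": the claim $\sum_j r_j \gtrsim R$. In your setup you fix the levels $M_j = 2^j u(x_0)$ first and then try to find radii via a dichotomy (doubling of the supremum versus smallness of the superlevel set, invoking Lemma~\ref{local max princ consequence}), and you then assert a "geometric series argument" showing the total radius consumed is comparable to $R$. There is no reason the radii produced this way should form a geometric series, and the dichotomy route does not obviously close. The paper's proof inverts the roles: it chooses the \emph{radii} adaptively, by continuity of $u$, so that the supremum doubles exactly, i.e.\ $R_0=0$, $R_K=R$, $M_k := \sup_{B_{R_k}}u = 2M_{k-1}$ for $k\le K-1$ and $M_K\le 2M_{K-1}$. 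Then $\sum_k r_k = \sum_k (R_k-R_{k-1}) = R$ is automatic, and the hypothesis $\sup_{B_{r_k}(x_{k-1})}u \le 2u(x_{k-1})$ of Lemma~\ref{est from below} holds by construction (center the ball at a point $x_{k-1}$ where $M_{k-1}$ is attained; then $B_{r_k}(x_{k-1})\subset B_{R_k}$). No dichotomy and no appeal to Lemma~\ref{local max princ consequence} is needed for this proposition.

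Two further points. First, your comparison of the dyadic sum with the integral is stated with the inequality in the wrong direction: to conclude you need the integral over a dyadic block bounded \emph{from below} by $c\,M/\sqrt{F(M/2)}$, not from above. The paper gets this by a shift of two indices: since $F$ is nondecreasing,
\begin{equation*}
\int_{M_{k-3}}^{M_{k-2}}\frac{dt}{\sqrt{F(t)}} \;\ge\; \frac{M_{k-2}-M_{k-3}}{\sqrt{F(M_{k-2})}} \;=\; \frac{1}{4}\,\frac{M_{k-1}}{\sqrt{F(M_{k-1}/2)}} \;\ge\; c\,r_k,
\end{equation*}
and the blocks $[M_{k-3},M_{k-2}]$ are disjoint and contained in $[u(x_0)/4,\sup_{B_R}u]$ --- this index shift is exactly why the lower limit of integration is $u(x_0)/4$, a feature your argument does not account for. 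Second, no monotonicity of $t/\sqrt{F(t)}$ is needed anywhere; only monotonicity of $F$ enters, so that part of your worry is moot once the blocks are set up as above.
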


\begin{proof}
With no loss of generality, we may assume $x_0 = 0$. For $k \in \{0, \cdots, K\}$, we select $R_k \in (0,R]$ such that $R_0 = 0$, $R_K = R$ and
$$
    M_k = 2M_{k-1} \quad \text{for} \quad 1 \leq k \leq K-1,
$$
and $M_K \leq 2 M_{K-1}$, where $M_k \coloneqq \sup_{B_{R_k}}u$, and $M_0 \coloneqq u(0)$. Indeed, this is always possible by the following reasoning: if $M_K \leq 2u(0)$, then $K=1$ and we are done. If $M_K > 2u(0)$, then, by continuity, there should be $R_1 \in (0,R)$ such that $M_1 = 2u(0)$. In the latter case, if $M_K \leq 2M_1$, then we stop. Otherwise, there should be $R_2 \in (R_1,R)$ such that $M_2 = 2M_1$. We continue doing so until we reach $R_{K-1}$ such that $M_K \leq 2M_{K-1}$. Observe that $K$ is finite, but it depends on the modulus of continuity of $u$. We will see that the estimate does not depend on this number $K$.

Take $x_k \in B_{R_k}$ such that $u(x_k) = M_k$ for every $k \in \{1,\cdots, K-1\}$. Denoting $r_k \coloneqq R_k - R_{k-1}$, it follows that 
$$
    \sup_{B_{r_k}(x_{k-1})}u \leq M_k = 2M_{k-1} = 2u(x_{k-1}),
$$
where the inequality is a consequence of the inclusion $B_{r_k}(x_{k-1}) \subset B_{R_k}$. By Lemma~\ref{est from below} we deduce
$$
    \frac{M_{k-1}}{\sqrt{F(M_{k-1}/2)}} \geq cr_k.
$$
By monotonicity, we obtain
$$
    \int_{M_{k-3}}^{M_{k-2}} \frac{dt}{\sqrt{F(t)}} \geq \frac{M_{k-3}-M_{k-2}}{\sqrt{\Tilde{F}\left(M_{k-2}\right)}} = \frac{1}{4} \frac{M_{k-1}}{\sqrt{F(M_{k-1}/2)}} \geq cr_k.
$$
Summing the above inequality from $k=1$ to $k=K$, we are led to
$$
    \sum_{k=1}^{K} \int_{M_{k-3}}^{M_{k-2}} \frac{dt}{\sqrt{F(t)}} \geq c\sum_{k=1}^{K} r_k = cR,
$$
where we have defined $M_{-1} \coloneqq M_0/2$ and $M_{-2} \coloneqq M_0/4$. The proposition is proved once we realize that
$$
    \int_{u(0)/4}^{\sup_{B_R}u} \frac{dt}{\sqrt{F(t)}} \geq \sum_{k=1}^{K} \int_{M_{k-3}}^{M_{k-2}} \frac{dt}{\sqrt{F(t)}}.
$$
\end{proof}

We next prove our extension of the Keller-Osserman theorem. 

\begin{corollary}\label{keller osserman}
Assume that there exists a nontrivial nonnegative entire solution to
$$
    \mathcal{L}[u] \geq f(u) \quad \text{in} \quad \mathbb{R}^n.
$$
Assume 
$$
    b_1 \in L^{q}_{ul}(\mathbb{R}^n), b_2 \in L^{q}_{ul}(\mathbb{R}^n), \quad \text{and} \quad c \in L^{q/2}_{ul}(\mathbb{R}^n).
$$
Then,
\begin{equation}\label{keller-osserman}
    \int_{1}^{\infty} \frac{dt}{\sqrt{F(t)}} = \infty.
\end{equation}
\end{corollary}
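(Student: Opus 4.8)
The plan is to reduce everything to Proposition~\ref{integral lower bound}, the obstruction being that its smallness hypothesis \eqref{coefs universally small} cannot hold on large balls, since uniformly local norms do not decay. I would overcome this by a scaling that trades ball size against coefficient size, crucially exploiting $q>n$. Since $u$ is nontrivial and nonnegative, fix $x_0$ with $u(x_0)>0$ and, translating, assume $x_0=0$. For a large parameter $R\ge1$ set $r=r_R:=c_1R^{-n/q}$ (the constant $c_1>0$, depending only on $n,\lambda,\Lambda,q$ and on $\|b_1\|_{L^q_{ul}},\|b_2\|_{L^q_{ul}},\|c\|_{L^{q/2}_{ul}}$, to be fixed below), and put $v(y):=u(ry)$, $y\in B_{2R}$. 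Exactly as in the proof of Lemma~\ref{est from below}, $v\in H^1(B_{2R})$ is a continuous nonnegative weak subsolution of $\mathcal L^{r}[v]\ge r^2f(v)$, where $\mathcal L^{r}$ has the form \eqref{form of the eq} with the same ellipticity constants and lower order coefficients $r\,b_i(r\cdot)$ and $r^2c(r\cdot)$.

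The point of the exponent $-n/q$ is the following elementary computation. Since $\|r\,b_i(r\cdot)\|_{L^q(B_{2R})}=r^{1-n/q}\|b_i\|_{L^q(B_{2rR})}$ and $rR=c_1R^{1-n/q}\ge1$ for $R$ large, covering $B_{2rR}$ by at most $C(rR)^n$ unit balls gives $\|b_i\|_{L^q(B_{2rR})}\le C(rR)^{n/q}\|b_i\|_{L^q_{ul}}$, whence $\|r\,b_i(r\cdot)\|_{L^q(B_{2R})}\le C\,(rR^{n/q})\,\|b_i\|_{L^q_{ul}}=Cc_1\|b_i\|_{L^q_{ul}}$; similarly $\|r^2c(r\cdot)\|_{L^{q/2}(B_{2R})}\le C(rR^{n/q})^2\|c\|_{L^{q/2}_{ul}}=Cc_1^2\|c\|_{L^{q/2}_{ul}}$. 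Choosing $c_1$ small enough, \eqref{coefs universally small} holds for $\mathcal L^{r}$ on $B_{2R}$, i.e. with the radius ``$R$'' of Proposition~\ref{integral lower bound}; and the decisive fact is that the effective radius $rR=c_1R^{1-n/q}$ still diverges as $R\to\infty$ --- this is exactly where $q>n$ is used, and it is the dual of the power $qn/(q-n)$ that appears throughout Section~\ref{supersolution-sct}.

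Now apply Proposition~\ref{integral lower bound} to $v$ on $B_{2R}$, with nonlinearity $\widetilde f=r^2f$ (primitive $\widetilde F=r^2F$) and with $v(0)=u(0)>0$; its constant is independent of $f$ and of the coefficients once \eqref{coefs universally small} holds, so it is uniform in $R$. This yields
$$\int_{u(0)/4}^{\,\sup_{B_R}v}\frac{dt}{\sqrt{r^2F(t)}}\ \ge\ cR,\qquad\text{that is,}\qquad\int_{u(0)/4}^{\,\sup_{B_{rR}(0)}u}\frac{dt}{\sqrt{F(t)}}\ \ge\ c\,rR=c\,c_1\,R^{1-n/q}.$$
Since $\sup_{B_{rR}(0)}u\le\sup_{\mathbb R^n}u=:M$ and the right-hand side tends to $+\infty$, letting $R\to\infty$ gives $\displaystyle\int_{u(0)/4}^{M}\frac{dt}{\sqrt{F(t)}}=\infty$.

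Finally, to deduce \eqref{keller-osserman}: if $M=+\infty$ this is immediate after a routine adjustment of the lower endpoint (handling the possible vanishing of $F$ near $0$); if $M<\infty$, then $u$ is a bounded nonnegative element of $H^1(\mathbb R^n)$ with $\mathcal L[u]\ge f(u)\ge0$, and a Liouville-type argument --- testing the linear inequality $\mathcal L[u]\ge0$ against $u\,\zeta_R^2$ for cutoffs $\zeta_R$ supported in $B_{2R}$ and using $u\in L^2(\mathbb R^n)$ together with the integrability of the coefficients --- forces $u\equiv0$, contradicting nontriviality. The only genuinely delicate point of the whole argument is the first one: choosing the scaling so that simultaneously the rescaled coefficients stay below the universal threshold of Proposition~\ref{integral lower bound} and the ball $B_{rR}$ exhausts $\mathbb R^n$; everything else is bookkeeping.
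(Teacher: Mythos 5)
Your rescaling step is correct and is a genuinely different organization of the argument from the paper's. You choose $r=c_1R^{-n/q}$ so that the rescaled coefficients satisfy \eqref{coefs universally small} on a single large ball $B_{2R}$ while the physical radius $rR=c_1R^{1-n/q}$ still diverges, and then invoke Proposition \ref{integral lower bound} once; the paper instead fixes one small scale $r_0$ once and for all and then chains Proposition \ref{integral lower bound} over $k$ overlapping balls of a \emph{fixed} radius $R$, proving $\int_{u(0)/4}^{\sup_{B_{Rk}}u}dt/\sqrt{\overline{F}}\ge 2^{-1}ck$ by induction. Both exploit $q>n$ at the same point, and your covering computation $\|r\,b_i(r\cdot)\|_{L^q(B_{2R})}\le Cc_1\|b_i\|_{L^q_{ul}}$ is right. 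For the portion of the argument it covers, your version is cleaner.

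The gap is the case $M:=\sup_{\mathbb{R}^n}u<\infty$, which cannot be dismissed. First, your conclusion $\int_{u(0)/4}^{M}dt/\sqrt{F}=\infty$ with $M<\infty$ only forces $F(u(0)/4)=0$ (since $F$ is nondecreasing, $\int_{\alpha}^{M}dt/\sqrt{F(t)}\le (M-\alpha)/\sqrt{F(\alpha)}$); this is information about $f$ near $0$ and says nothing about $\int_1^\infty$. Second, the Liouville argument you propose to exclude this case is false: testing $\mathcal L[u]\ge0$ against $u\zeta_R^2$ does not force $u\equiv0$ when $c$ may be positive. For instance $u=(1-|x|^2)_+^2\in H^1(\mathbb R^n)$ is a nontrivial, bounded, nonnegative, continuous weak subsolution of $\Delta u+c_0u\ge0$ for $c_0$ large, and the energy identity only yields $\int|Du|^2\le 2c_0\int u^2$. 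The paper's device for this is the perturbation $\overline f(t)=f_0(t)+\varepsilon t$, with $\varepsilon$ calibrated so that the extra zeroth-order coefficient still fits the smallness budget ($\varepsilon|B_{2R}|^{2/q}=\eta/2$): then $\overline F(t)\ge\varepsilon t^2/2>0$, so the divergence of $\int_{u(0)/4}^{\sup_{B_{Rk}}u}dt/\sqrt{\overline F}$ forces $\sup_{B_{Rk}}u\to\infty$, simultaneously disposes of the lower endpoint (because $\int_{u(0)/4}^{1}dt/\sqrt{\overline F}\le\sqrt{2/\varepsilon}\,\ln(4/u(0))<\infty$), and finally $F\lesssim\overline F$ gives \eqref{keller-osserman}. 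Note that this $\varepsilon$-trick cannot simply be grafted onto your single-scale scheme: with $r=c_1R^{-n/q}$ the admissible $\varepsilon$ is of order $R^{-2n/q}$, so the cost $\sqrt{2/\varepsilon}\sim R^{n/q}$ of one multiplicative step in $u$ beats your gain $rR\sim R^{1-n/q}$ unless $q>2n$. This is precisely why the paper separates the two scales: a fixed $r_0$ to tame the coefficients, then iteration over balls of fixed radius $R$, where the gain $cR$ dominates the cost $R^{n/q}$ for every $q>n$.
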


\begin{proof}
Let $\eta>0$ be the quantity from \eqref{coefs universally small}, $c>0$ be the universal constant from Proposition \ref{integral lower bound} and $R>1$ be picked up such that
$$
    cR - 2R^{\frac{n}{q}}\sqrt{\frac{4|B_2|^{\frac{2}{q}}}{\eta}} \geq 2^{-1}c,
$$
which is possible since $q > n$. For $r_0>0$ small such that
$$
    r_0^{2 - \frac{2n}{q}}\left(\|b_1\|_{L^q_{ul}} + \|b_2\|_{L^q_{ul}} + \|c\|_{L^{q/2}_{ul}}  \right) < \eta/2,
$$
we define $v(x) \coloneqq u(r_0 x)$. This new function solves
$$
    \mathcal{L}_0[v] \geq f_0(v),
$$
where $f_0(t) \coloneqq r_0^2 f(t)$, and
$$
    \mathcal{L}_0[v] = \mbox{div}(A_0(x)Dv + b_1^{r_0}(x)v) + b_2^{r_0}(x)\cdot Dv + c_0(x)v,
$$
with $b_1^{r_0}(x) \coloneqq r_0 b_1(r_0 x)$, $b_2^{r_0}(x) \coloneqq r_0 b_1(r_0 x)$ and $c_0(x) \coloneqq r_0^2c(r_0 x)$. In this setting, we have that for any $x_0 \in \mathbb{R}^n$
$$
    \mathcal{S}(x_0) \coloneqq \|b_1^{r_0}\|_{L^{q}(B_{2R}(x_0))} + \|b_2^{r_0}\|_{L^{q}(B_{2R}(x_0))} + \|c_0\|_{L^{q/2}(B_{2R}(x_0))}
$$
satisfy
\begin{eqnarray*}
    \mathcal{S}(x_0) & = & r_0^{1-\frac{n}{q}}\|b_1\|_{L^{q}(B_{2Rr_0}(r_0x_0))} + r_0^{1-\frac{n}{q}}\|b_2\|_{L^{q}(B_{2Rr_0}(r_0x_0))} + r_0^{2-\frac{2n}{q}}\|c\|_{L^{q/2}(B_{2Rr_0}(r_0x_0))}\\
    & \leq & r_0^{2-\frac{2n}{q}}\left(\|b_1\|_{L^{q}_{ul}}+\|b_2\|_{L^{q}_{ul}}+\|c\|_{L^{q/2}_{ul}}\right) \leq \eta/2,
\end{eqnarray*}
by the choice of $r_0$. Pick $\varepsilon>0$ such that 
$$
    \varepsilon |B_{2R}|^{\frac{2}{q}} = \eta/2,
$$
and notice $v$ is also a subsolution to
$$
   \overline{\mathcal{L}}[v] \coloneqq \left(\mathcal{L}_0+\varepsilon\right)[v] \geq \overline{f}(v),
$$
where $\overline{f}(t) = f_0(t) + \varepsilon t$. We argue now that
$$
    \int_{1}^\infty \frac{dt}{\sqrt{\overline{F}(t)}} = \infty,
$$
from which the corollary follows, since 
$$
    F(t) \coloneqq \int_{0}^tf(s)\,ds \lesssim \int_{0}^t\overline{f}(s)\,ds \eqqcolon \overline{F}(t).
$$
We claim that
$$
	\int_{u(0)/4}^{\sup_{B_{Rk}(0)}u}\frac{dt}{\sqrt{\overline{F}(t)}} \geq 2^{-1}ck, \quad \text{for every }\, k \in \mathbb{N},
$$
where
$$
    \overline{F}(t) \coloneqq \int_{0}^t\overline{f}(s)\,ds.
$$
We prove it by induction. For the case $k=1$, it follows from Proposition \ref{integral lower bound} that
$$
    \int_{u(0)/4}^{\sup_{B_R}u}\frac{dt}{\sqrt{\overline{F}(t)}} \geq cR,
$$
where we used that $R>1$. Now, assume it holds for $k$ and let us prove it also holds for $k+1$. Let $\overline{x}$ be the point such that $u(\overline{x}) = \sup_{B_{Rk}(0)} u$. Since $u(0) > 0$, it also follows that $u(\overline{x}) > 0$. We can then apply Proposition \ref{integral lower bound} in $B_{2R}(\overline{x})$ to obtain
$$
	\int_{u(\overline{x})/4}^{\sup_{B_R(\overline{x})}u}\frac{dt}{\sqrt{\overline{F}(t)}} \geq cR.
$$
Therefore
\begin{eqnarray*}
	\int_{u(0)/4}^{\sup_{B_{R(k+1)}(0)}u} \frac{dt}{\sqrt{\overline{F}(t)}} & = & \int_{u(0)/4}^{u(\overline{x})} \frac{dt}{\sqrt{\overline{F}(t)}} + \int_{u(\overline{x})}^{\sup_{B_{R(k+1)}(0)}u}\frac{dt}{\sqrt{\overline{F}(t)}}\\
	& \geq & \int_{u(0)/4}^{\sup_{B_{Rk}(0)} u} \frac{dt}{\sqrt{\overline{F}(t)}} + \int_{u(\overline{x})}^{\sup_{B_R(\overline{x})}u}\frac{dt}{\sqrt{\overline{F}(t)}}\\
	& \geq & 2^{-1}ck + \int_{u(\overline{x})}^{\sup_{B_{R}(\overline{x})}u}\frac{dt}{\sqrt{\overline{F}(t)}},
\end{eqnarray*}
where we used that $\sup_{B_R(\overline{x})}u \leq \sup_{B_{R(k+1)}(0)}u$, which comes from the fact that $B_R(\overline{x}) \subset B_{R(k+1)}(0)$. Now, we write
$$
	\int_{u(\overline{x})}^{\sup_{B_{R}(\overline{x})}u}\frac{dt}{\sqrt{\overline{F}(t)}} = \int_{u(\overline{x})/4}^{\sup_{B_{R}(\overline{x})}u}\frac{dt}{\sqrt{\overline{F}(t)}} - \int_{u(\overline{x})/4}^{u(\overline{x})}\frac{dt}{\sqrt{\overline{F}(t)}}.
$$
Observe that $\overline{f}(t) \geq \varepsilon t$, and so
$$
	\overline{F}(t) \geq \frac{\varepsilon}{2}t^2,
$$
and so
$$
 \int_{u(\overline{x})/4}^{u(\overline{x})}\frac{dt}{\sqrt{\overline{F}(t)}} \leq \sqrt{\frac{2}{\varepsilon}}\int_{u(\overline{x})/4}^{u(\overline{x})}\frac{dt}{t} \leq 2\sqrt{\frac{2}{\varepsilon}}.
$$
This implies that
$$
	\int_{u(0)/4}^{\sup_{B_{R(k+1)}(0)}u}\frac{dt}{\sqrt{\overline{F}(t)}} \geq 2^{-1}ck + cR  - 2\sqrt{\frac{2}{\varepsilon}}.
$$
By the choice of $R$, we have
\begin{eqnarray*}
   cR - 2\sqrt{\frac{2}{\varepsilon}} \geq 2^{-1}c
\end{eqnarray*}
from which the claim follows. Passing to the limit as $k \to \infty$ in this claim, we have
$$
    \int_{1}^\infty\frac{dt}{\sqrt{\overline{F}(t)}} = \infty.
$$
\end{proof}

\subsection{Proof of generalized Harnack inequality}\label{main thm-subsct}

Now we are ready to deliver the

\begin{proof}[Proof of Theorem \ref{main theorem}]
We may assume $u$ is smooth; the general case follows by standard approximation. Replace $u$ by $v(x) \coloneqq u(x_0 + rx)$, for $x_0 \in B_1$. As above, for $r$ small enough, this function solves a PDE whose coefficients have norms smaller than any a priori given constant, with right-hand side $\tilde{f} = r^2 f$.

Recalling the notation from Proposition \ref{ODE for level sets}, let $t_0$ be defined as
$$
    t_0 \coloneqq \sup\left\{t>0 \colon \eta(t) \leq \frac{|B_2|}{2} \right\}.
$$
The proof has two steps. We will first show that there exists a constant $C_0 = C_0(n,\lambda,\Lambda,\|b_1\|_q,\|b_2\|_q,\|c\|_{q/2})>0$ such that
$$
    \int_{\inf_{B_1} v}^{t_0}\frac{ds}{\sqrt{\tilde{F}(s)}+s} \leq C_0.
$$
To do so, we argue as follows: for $\delta_0$ as in Lemma \ref{sublevel measure decay}, there is a constant $C$ depending on $\delta_0$ such that either
\begin{equation}\label{integral from zero to t0}
    \int_{0}^{t_0}\frac{ds}{\sqrt{\tilde{F}(s)}+s}\leq C,
\end{equation}
or there exists $t_\delta \in (0,t_0)$ such that
\begin{equation}\label{t small}
    \eta(t_\delta) \leq \delta_0 \left(\frac{t_\delta}{\sqrt{\tilde{F}(t_\delta)} + t_\delta} \right)^\frac{q n}{q-n} \quad \mbox{and} \quad \int_{t_\delta}^{t_0} \frac{ds}{\sqrt{\tilde{F}(s)}+s}\leq C.
\end{equation}
 We assume this is not true. Then
$$
    \int_{0}^{t_0}\frac{ds}{\sqrt{\tilde{F}(s)}+s} > C,
$$
for any given $C$, and there holds
$$
    \eta(t) \geq \delta_0 \left(\frac{t}{\sqrt{\tilde{F}(t)} + t} \right)^\frac{q n}{q-n} \quad \mbox{for every} \quad t \in (0,t_0),
$$
or
$$
    \int_{t}^{t_0} \frac{ds}{\sqrt{\tilde{F}(s)}+s} > C \quad \mbox{for every} \quad t \in (0,t_0).
$$
In the latter case, if we take $t = t_0/2$, we have
$$
    C < \int_{t_0/2}^{t_0} \frac{ds}{\sqrt{\tilde{F}(s)}+s} \leq \ln(t_0) - \ln(t_0/2) = \ln(2),
$$
which is a contradiction for $C>0$ large. If, on the other hand, we have
$$
    \eta(t) \geq \delta_0 \left(\frac{t}{\sqrt{\tilde{F}(t)} + t} \right)^\frac{q n}{q-n} \quad \mbox{for every} \quad t \in (0,t_0),
$$
then
$$
    \frac{1}{t} \geq \delta_0^{\frac{q-n}{q n}}\eta(t)^{-\frac{q-n}{q n}}\frac{1}{\sqrt{\tilde{F}(t)}+t},
$$
and so, by Proposition \ref{ODE for level sets}, we have
$$
    \eta'(t) \geq c\, \min\left\{\frac{1}{\sqrt{\tilde{F}(t)}}\eta(t)^{\frac{n-1}{n}}, \frac{1}{t}\eta(t) \right\} \geq c\frac{1}{\sqrt{\tilde{F}(t)}+t}\min\left\{\eta(t)^{1 - \frac{1}{n}}, \eta(t)^{1-\frac{q-n}{q n}} \right\}.
$$
Observe that since
$$
    1 - \frac{1}{n} < 1 - \frac{q-n}{q n},
$$
and $\eta(t) \leq \frac{|B_2|}{2}$, we have 
$$
    \min\left\{\eta(t)^{1 - \frac{1}{n}}, \eta(t)^{1-\frac{q-n}{q n}} \right\} = \eta(t)^{1-\frac{q-n}{q n}},
$$
and so
$$
    \eta'(t) \geq c\, \frac{1}{\sqrt{\tilde{F}(t)}+t} \eta(t)^{1-\frac{q-n}{q n}}.
$$
But this is equivalent to saying that
$$
    \frac{d}{dt}\left(\eta(t)^{\frac{q-n}{q n}} \right) \geq c_1 \frac{1}{\sqrt{\tilde{F}(t)}+t},
$$
which implies, after integrating from $t$ to $t_0$,
$$
   |B_2|^{\frac{q-n}{q n}} \geq c_1 \int_{t}^{t_0}\frac{1}{\sqrt{\tilde{F}(s)}+s}ds \quad \mbox{for every} \quad t \in (0,t_0).
$$
By Fatou's lemma
$$
    \int_{0}^{t_0} \frac{ds}  {\sqrt{\tilde{F}(s)}+s} \leq \liminf_{t \to 0^+} \int_{t}^{t_0} \frac{ds}{\sqrt{\tilde{F}(s)}+s},
$$
which is a contradiction with
$$
    \int_{0}^{t_0}\frac{ds}{\sqrt{\tilde{F}(s)}+s} > C,
$$
if $C$ is large enough again. This implies that \eqref{t small} should hold for some $t_\delta$. But then, Lemma \ref{sublevel measure decay} implies that
$$
    \inf_{B_1} v \geq t_\delta/2,
$$
and so
\begin{eqnarray*}
    \int_{\inf_{B_1}v}^{t_0}\frac{ds}{\sqrt{\tilde{F}(s)}+s} &\leq& \int_{\frac{t_\delta}{2}}^{t_\delta} + \int_{t_\delta}^{t_0}\frac{ds}{\sqrt{\tilde{F}(s)}+s}\\
    & \leq & \int_{\frac{t_\delta}{2}}^{t_\delta} \frac{ds}{s} + \int_{t_\delta}^{t_0}\frac{ds}{\sqrt{\tilde{F}(s)}+s} \leq C_0,
\end{eqnarray*}
for some universal constant $C_0$. To finish the proof, it remains to show that
$$
    \int_{t_0}^{\sup_{B_1}v}\frac{ds}{\sqrt{\tilde{F}(s)}+s} \leq C_0.
$$
For a given small $\epsilon>0$ to be chosen later, we let $t_\epsilon \geq t_0$ be the first value such that $\mu(t_\epsilon) \leq \epsilon$, that is
$$
    t_\epsilon \coloneqq \sup\{t>0 \colon \mu(t)>\epsilon \}
$$
(recall $\mu(t) = |A_t \cap B_2|$). Notice that we must have $t_\epsilon > t_0$ since by the very definition of $t_0$ we have $\eta(t_0) = |B_2|/2$, and so, by taking into account that $\eta(t_0) + \mu(t_0) = |B_2|$, we have $\mu(t_0) = |B_2|/2 > \epsilon$, as long as $\epsilon$ is small enough. This implies that $\mu(t) > \epsilon$ for $t \in [t_0,t_\epsilon)$, and so, by Proposition \ref{ODE for level sets}, we have
$$
    -\mu'(t) \geq c \frac{\mu^{1 - \frac{1}{n}}(t)}{\sqrt{\tilde{F}(t)}+t},
$$
for some constant $c$ that depends on $\epsilon$. This is equivalent to saying that
$$
    -\frac{d}{dt}\left(\mu^{\frac{1}{n}}(t) \right) \geq \frac{c}{\sqrt{\tilde{F}(t)}+t},
$$
which leads, by integration from $t_0$ to $t_\epsilon$, to
$$
    \int_{t_0}^{t_\epsilon} \frac{ds}{\sqrt{\tilde{F}(s)}+s} \leq C.
$$
The proof now is done once we show that for $\epsilon$ small enough we have $\sup_{B_1}v \leq 2t_\epsilon$. Indeed, if this is true, then by using what we have just proved, we obtain
$$
    \int_{t_0}^{\sup_{B_1} v} \frac{ds}{\sqrt{\tilde{F}(s)}+s} \leq \int_{t_0}^{t_\epsilon} \frac{ds}{\sqrt{\tilde{F}(s)}+s} + \int_{t_\epsilon}^{2t_\epsilon} \frac{ds}{s} \leq C.
$$
Let us assume for contradiction that $\sup_{B_1} v > 2t_\epsilon$. Recall that by the definition of $t_\epsilon$, we have
$$
    |\{v \geq t_\epsilon\} \cap B_2| \leq \epsilon.
$$
Now, consider $x_0$ and $y_0$ such that $v(x_0) = \sup_{B_1} v$ and $v(y_0) = \sup_{B_{3/2}}v$. Notice that
$$
    B_{5/4 - |x_0|}(x_0) \subset B_{5/4} \quad \text{and} \quad B_{7/4 - |y_0|}(y_0) \subset B_{7/4}.
$$
In particular, by definition of $t_\epsilon$, we have
$$
\max\left\{\left|\left\{v \geq v(x_0)/2 \right\}\cap B_{5/4 - |x_0|}(x_0)\right|, \left|\left\{v \geq v(y_0)/2 \right\}\cap B_{7/4 - |y_0|}(y_0)\right|\right\} \leq \epsilon,
$$
and so, by Lemma \ref{local max princ consequence}, we get
\begin{equation}\label{supremum relations}
    \sup_{B_{5/4}}v \geq 4\sup_{B_1}v \quad \text{and} \quad \sup_{B_{7/4}}v \geq 4\sup_{B_{3/2}}v.
\end{equation}
Consider a sequence of radii $1 = R_0 < R_1 < \cdots $, so that $M_j = \sup_{B_{R_j}}u$ satisfies the relation
$$
    M_j = 2M_{j-1} \quad \text{for} \quad j \geq 1.
$$
We do it until we find $R_K \geq 3/2$. By \eqref{supremum relations}, we obtain that $K\ge3$ and $R_K \leq 7/4$, which ensures it is well-defined. For $j \geq 1$, we define $r_j \coloneqq R_j - R_{j-1}$. We claim that there exists a small $c>0$ such that
$$
    \mu^{\frac{1}{n}}(t) \geq c r_j \quad \text{for} \quad t \in (M_{j-3},M_{j-2}],
$$
for $j \geq 3$. Since $\mu$ is monotone, it is enough to prove this for $t = M_{j-2}$. We assume the contrary to get
$$
    \mu\left(M_{j-2}\right) \leq \epsilon_1 \left|B_{r_j}\right|.
$$
By the construction of the sequence $M_j$, this is equivalent to
$$
    \left|\left\{v \geq M_{j-1}/2 \right\}\cap B_2\right| \leq \epsilon_1 \left|B_{r_j}\right|.
$$
If $x_{j-1} \in B_{R_{j-1}}$ is such that $v\left(x_{j-1}\right) = M_{j-1}$, then by Lemma \ref{local max princ consequence} we have
$$
    \sup_{B_{r_j}\left(x_{j-1}\right)} v \geq 4M_{j-1}.
$$
Taking into account the inclusion $B_{r_j}\left(x_{j-1}\right) \subset B_{R_j}$, it follows that $M_j \geq \sup_{B_{r_j}\left(x_{j-1}\right)} v$. But $M_j = 2M_{j-1}$, leading to
$$
    M_j \geq 4M_{j-1} = 2M_j,
$$
which is a contradiction. Now, since
$$
    \sup_{B_{r_j}\left(x_{j-1}\right)} v \leq M_j = 2M_{j-1} = 2v\left(x_{j-1}\right),
$$
we can use Lemma \ref{est from below} to obtain
$$
    \frac{M_{j-1}}{\sqrt{F(M_{j-1}/2)}} \geq cr_{j},
$$
and by the monotonicity of $F$
$$
    \frac{t}{\sqrt{F(t)}} \geq cr_{j} \quad \text{for} \quad t \in (M_{j-3},M_{j-2}].
$$
We now use Proposition \ref{ODE for level sets} to obtain
$$
    -\mu'(t) \geq \frac{c}{t} \mu^{1 - \frac{1}{n}}(t)r_j, 
$$
which is equivalent to
$$
    -\frac{d}{dt}\left(\mu^{\frac{1}{n}}(t)\right) \geq \frac{c}{t}r_j.
$$
Integrating over $(M_{j-3},M_{j-2})$ and using the relation $M_{j-2} = 2M_{j-3}$, we conclude that
$$
    \mu^{\frac{1}{n}}(M_{j-3}) - \mu^{\frac{1}{n}}(M_{j-2}) \geq cr_j.
$$
Summing the inequality in $j \geq 3$, we have
$$
    \mu^{\frac{1}{n}}\left(\sup_{B_1} v\right) \geq c \sum_{j=3}^K r_j. 
$$
But observe that since $R_K \geq 3/2$, we have
$$
    \sum_{j=1}^K r_j = R_K - R_0 \geq 1/2.
$$
Moreover, since $R_2 \leq 5/4$, we have $r_1 + r_2 \leq 1/4$, which implies that
$$
    \sum_{j=3}^K r_j = \sum_{j=1}^K r_j - (r_1 + r_2) \geq 1/4,
$$
and so
$$
   \mu^{\frac{1}{n}}\left(\sup_{B_1} v\right) \geq c/4 
$$
However we are assuming $\sup_{B_1} v > 2t_{\epsilon}$, and so by the definition of $t_\epsilon$ we have
$$
    4^{-n}c^n \leq \mu\left(\sup_{B_1} v\right) \leq \mu(t_\epsilon) \leq \epsilon,
$$
which is a contradiction for $\epsilon$ small. 

This proves the generalized Harnack inequality for $v$, which, scaling back to $u$, looks like
\begin{equation}\label{local gen harnack}
    \int_{\inf_{B_r(x_0)}u}^{\sup_{B_r(x_0)}u}\frac{ds}{r\sqrt{F(s)} + s} \leq C,
\end{equation}
for some constant $C>0$, and any $x_0 \in B_1$. To extend it to the whole $B_1$, we proceed via a Harnack chain argument. Let $x_{min}$ and $x_{max}$ be the points where the infimum and supremum are attained in $B_1$. Consider $\{B_r(x_i)\}$ be a family of balls such that $B_r(x_i)\cap B_r(x_{i+1})\not = \emptyset$ for every $i \in \{0,1\cdots,k\}$ for some $k$ that depends only on dimension and $r$, with $x_0 = x_{min}$ and $x_{max} = x_k$. First, selecting $y_1 \in B_{r}(x_0)\cap B_r(x_{1})$, we obtain
\begin{eqnarray*}
   \int_{\inf_{B_1}u}^{\sup_{B_1}u}\frac{ds}{\sqrt{F(s)} + s} &\leq& \int_{\inf_{B_1}u}^{u(y_1)}\frac{ds}{\sqrt{F(s)} + s} + \int_{u(y_1)}^{\sup_{B_1}u}\frac{ds}{\sqrt{F(s)} + s}\\ 
   & \leq & \int_{\inf_{B_r(x_0)}u}^{\sup_{B_r(x_0)}u}\frac{ds}{r\sqrt{F(s)} + s} + \int_{\inf_{B_r(x_1)}u}^{\sup_{B_1}u}\frac{ds}{r\sqrt{F(s)} + s}.
\end{eqnarray*}
Now we select $y_2 \in B_r(x_1)\cap B_r(x_2)$ and repeat the same for the second integral. We continue doing so until reaching the last ball in the family of balls. Therefore
$$
    \int_{\inf_{B_1}u}^{\sup_{B_1}u}\frac{ds}{\sqrt{F(s)} + s} \leq \sum_{i=0}^{k-1} \int_{\inf_{B_r(x_i)}u}^{\sup_{B_r(x_{i})}u}\frac{ds}{r\sqrt{F(s)}+s}.
$$
Each one of the integrals in the sum is controlled by \eqref{local gen harnack}, leading to
$$
   \int_{\inf_{B_1}u}^{\sup_{B_1}u}\frac{ds}{\sqrt{F(s)} + s} \leq kC.
$$
Since $k$ is universal, the theorem is proved.
\end{proof}

\subsection{Proof of the strong maximum principle}\label{max princ-subsct}

\begin{proof}
Assume for contradiction that $u \in H^1(\Omega)$ is a nontrivial supersolution to
$$
    \mathcal{L}[u] \leq f(u) \quad \text{in} \quad \Omega.
$$
Arguing as in Step 1 of \cite{SS21}*{Proof of Theorem 1.1}, we can find  a ball
$B^\prime\subset\subset\Omega$ such that $\mathrm{essinf}_{B^\prime}u=0$ but the trace of $u$ on $\partial B^\prime$ does not vanish identically.

We may assume $f\ge 0$ without loss of generality (replace $f$ by $f^+$). As explained in Step 1 of \cite{SS21}*{Proof of Theorem 1.1}, since $0\le u$ are respectively a sub- and a supersolution, the generalized sub- and super-solution method yields a {\it solution} of $\mathcal{L}[\tilde u]=f(\tilde u)$ in $B^\prime$, such that $0\le \tilde u\le u$  in $B^\prime$ and $\tilde u = {\min(u,1)}$ on $\partial B^\prime$ in the trace sense (the latter is preventing $\tilde u$ from vanishing identically). In addition, since $\tilde u\ge 0$ satisfies $\mathcal{L}[\tilde u]\ge 0$ in $B'$ and $\tilde u \le 1$ on $\partial B^\prime$, we can apply  \cite[Theorem 8.15]{GT} to $\mathcal{L}[\tilde u-1]\ge \div(b_1)+c$ and deduce that 
 $\tilde u\in L^\infty(B')$ and $f(\tilde u)\in L^\infty(B')$, so $\tilde u$ is (H\"older) continuous by the De Giorgi-Moser theory (\cite[Theorem 8.15]{GT}). By assumption, since $\mathrm{essinf}_{B^\prime}u=0$, it follows that $\min_{B'} \Tilde{u} = 0$. This would imply, by Theorem \ref{main theorem}, that
$$
    \int_{0}^{\max_{B'}\Tilde{u}}\frac{dt}{\sqrt{F(t)}+t} \leq C,
$$
for some universal constant $C$. But this contradicts the assumption of Theorem \ref{maximum principle}.
\end{proof}

\bigskip

{\small \noindent{\bf Acknowledgments.} A.S. is supported by the King Abdullah University of Science and Technology (KAUST). A.S. acknowledges the Department of Mathematics at PUC-Rio, where most of this research was conducted.}

\bigskip

%%%%%%%%%%%%%%%%%%%% REFERENCES


\begin{thebibliography}{60}

\bibitem{AGQ} {\sc S. Alarc\'on, J. Garc\'{\i}a-Meli\'an, A. Quaas}, 
{ Keller-Osserman type conditions for some elliptic problems with gradient terms}. 
{\em J. Differential Equations} {\bf 252},  no. 2, (2012), 886--914.

\bibitem{CDLV14} {\sc I. Capuzzo-Dolcetta, F. Leoni, A. Vitolo}, 
{Entire subsolutions of fully nonlinear degenerate elliptic equations}. 
{\em Bull. Inst. Math. Acad. Sin. (N.S.)} {\bf 9}, no.~2, (2014), 147--161.

\bibitem{CDLV16} {\sc I. Capuzzo Dolcetta, F. Leoni, A. Vitolo}, 
{On the inequality $F(x,D^2u)\geq f(u)+g(u)|Du|^q$}. 
{\em Math. Ann.} {\bf 365}, No. 1-2, (2016), 423-448.

\bibitem{DBTR} {\sc E. DiBenedetto, N. S. Trudinger}, 
{ Harnack inequalities for quasi-minima of variational integrals}. 
{\em Ann. Inst. H. Poincar\'e Anal. Non Lin\'eaire} {\bf 1}, no. 4, (1984), 295--308.
    
\bibitem{DGGW} {\sc L. Dupaigne, M. Ghergu, O. Goubet, G. Warnault}, 
{Entire large solutions for semilinear elliptic equations}. 
{\em J. Differential Equations} {\bf 253}, no. 7, (2012), 2224--2251.
    
\bibitem{EV} L.~C. Evans, {\it Partial differential equations}, Graduate Studies in Mathematics, 19, Amer. Math. Soc., Providence, RI, 1998.    
    
\bibitem{Fa} {\sc A. Farina }, 
{\em  Liouville-type theorems for elliptic problems}, 
in ``Handbook of
Differential Equations: Stationary Partial Differential
Equations", Vol. {\bf 4} (Michel Chipot, Editor) (2007), 483--591.

\bibitem{FSs} {\sc A. Farina, J. Serrin}, 
{Entire solutions of completely coercive quasilinear elliptic equations, I}. 
{\em J. Differential Equations} {\bf 250}, no. 12, (2011), 4367--4408.

\bibitem{FS} {\sc A. Farina, J. Serrin}, 
{Entire solutions of completely coercive quasilinear elliptic equations, II}. 
{\em J. Differential Equations} {\bf 250}, no. 12, (2011), 4409--4436.

\bibitem{FMQ09} {\sc P.~L. Felmer, M.~S. Montenegro, A. Quaas}, 
{ A note on the strong maximum principle and the compact support principle}. 
{\em J. Differential Equations} {\bf 246}, no. 1, (2009), 39--49.

\bibitem{FQS13} {\sc P.~L. Felmer, A. Quaas, B.~S. Sirakov}, 
{ Solvability of nonlinear elliptic equations with gradient terms}. 
{\em J. Differential Equations} {\bf 254}, no. 11, (2013), 4327--4346.
    
\bibitem{FPRArma} {\sc R. Filippucci, P. Pucci, M. Rigoli}, 
{ Non-existence of entire solutions of degenerate elliptic inequalities with weights}. 
{\em Arch. Ration. Mech. Anal.} {\bf 188}, (2008), 155--179.
    
\bibitem{FPR} {\sc R. Filippucci,  P. Pucci, M. Rigoli}, 
{On entire solutions of degenerate elliptic differential inequalities with nonlinear gradient terms}. 
{\em J. Math. Anal. Appl.} {\bf 356}, no. 2, (2009), 689--697.

\bibitem{GV} {\sc J. Ginibre, G. Velo}, 
{ The Cauchy problem in local spaces for the complex Ginzburg--Landau equation. II. Contraction methods}. 
{\em Comm. Math. Phys.} {\bf 187}, no. 1, (1997), 45--79.

\bibitem{EG} {\sc E. Giusti}, {\em Direct Methods in the Calculus of Variations}. World Scientific, Singapore, 2003.

\bibitem{Lin} {\sc Q. Han, F. Lin}, 
{\em Elliptic Partial Differential Equations}. Courant Lecture Notes in Mathematics, 
{\bf 1}. New York Univ., Courant Inst. Math. Sci., New York; Amer. Math. Soc., Providence, RI, 1997.

\bibitem{H52} {\sc E. Hopf}, 
{ A remark on linear elliptic differential equations of second order}. 
{\em Proc. Amer. Math. Soc.} {\bf 3}, (1952), 791--793.

\bibitem{VJ1} {\sc V. Julin}, 
{ Generalized Harnack inequality for semilinear elliptic equations}. 
{\em J. Math. Pures Appl. (9)} {\bf 106}, no. 5, (2016), 877--904.

\bibitem{Ka} {\sc T. Kato}, 
{ The Cauchy problem for quasi-linear symmetric hyperbolic systems}. 
{\em Arch. Ration. Mech. Anal.} {\bf 58}, (1975), 181--205.

\bibitem{K57} {\sc J. B. Keller}, 
{ On solutions of $\Delta u = f(u)$}. 
{\em Comm. Pure Appl. Math.} {\bf 10}, (1957), 503--510.

\bibitem{K13} {\sc A. A. Kon'kov}, 
{ On solutions of quasilinear elliptic inequalities containing terms with lower-order derivatives}. 
{\em Nonlinear Anal.} {\bf 90}, (2013), 121--134.

\bibitem{O57} {\sc R. Osserman}, 
{ On the inequality $\Delta u \geq f(u)$}. 
{\em Pac. J. Math.} {\bf 7}, (1957), 1641--1647.    

\bibitem{PR18} {\sc P. Pucci, V. R\u{a}dulescu}, 
{ The maximum principle with lack of monotonicity}. 
{\em Electron. J. Qual. Theory Differ. Equ.} {\bf 58}, (2018).

\bibitem{PS00} {\sc P. Pucci, J. Serrin}, 
{ A note on the strong maximum principle for elliptic differential inequalities}. 
{\em J. Math. Pures Appl. (9)} {\bf 79}, no. 1, (2000), 57--71.
    
\bibitem{PS} {\sc P. Pucci, J. Serrin}, 
{\em The maximum principle}. 
Progress in Nonlinear Differential Equations and their Applications, 73. Birkhauser Verlag, Basel, 2007.
        
 \bibitem{PSZ} {\sc P. Pucci, J. Serrin, H. Zou}, 
 { A strong maximum principle and a compact support principle for singular elliptic inequalities}. 
 {\em J. Math. Pures Appl. (9)} {\bf 78}, (1999), 769--789.

\bibitem{SS21} {\sc B. Sirakov, P. Souplet}, { The V'azquez maximum principle and the Landis conjecture for elliptic PDE with unbounded coefficients}. {\em Adv. Math.} {\bf 387}, (2021), 27~pp.
    
 \bibitem{SS24} {\sc B. Sirakov, P. Souplet}, Elliptic regularity estimates with optimized constants and applications, https://arxiv.org/abs/2411.19367

\bibitem{GT} {\sc D. Gilbarg, N.~S. Trudinger}, 
{\em Elliptic Partial Differential Equations of Second Order}. 
Second edition. Grundlehren der mathematischen Wissenschaften, {\bf 224}. Springer, Berlin, 1983.

\bibitem{Ra} {\sc V. Radulescu}, {\em Singular phenomena in
nonlinear elliptic problems: from boundary blow-up solutions to
equations with singular nonlinearities}, in ``Handbook of
Differential Equations: Stationary Partial Differential
Equations", Vol. {\bf 4} (Michel Chipot, Editor) (2007), 483--591.

\bibitem{V84} {\sc J.~L. V\'azquez}, 
{ A strong maximum principle for some quasilinear elliptic equations}. 
{\em Appl. Math. Optim.} {\bf 12}, no.~3, (1984), 191--202. 

\end{thebibliography}
\end{document}